\documentclass[11pt]{article}

\usepackage[margin=0.96in]{geometry}
\usepackage{amsmath,amssymb,amsthm,mathtools}
\usepackage{mathrsfs}
\usepackage{xcolor}
\usepackage{microtype}
\usepackage[numbers,sort&compress]{natbib}
\usepackage[colorlinks=true,linkcolor=blue,citecolor=blue,urlcolor=blue]{hyperref}
\usepackage{mathtools}\mathtoolsset{showonlyrefs}
\usepackage{indentfirst}
\usepackage{tikz}

\newtheorem{theorem}{Theorem}[section]
\newtheorem{corollary}[theorem]{Corollary}
\newtheorem{proposition}[theorem]{Proposition}
\newtheorem{lemma}[theorem]{Lemma}

\newtheorem{remark}{Remark}
\newtheorem*{conjecture}{Hot Spots Conjecture}

\newcommand{\Sph}{\mathbb S^2}
\newcommand{\Hyp}{\mathbb H^2}
\newcommand{\R}{\mathbb R}
\newcommand{\dist}{\operatorname{dist}}
\newcommand{\diam}{\operatorname{diam}}
\newcommand{\Crit}{\operatorname{Crit}}
\newcommand{\Area}{\operatorname{Area}}

\newcommand{\calA}{\mathcal A}
\newcommand{\calC}{\mathcal C}

\numberwithin{equation}{section}

\title{Quantitative characterization of deviations from the hot spots conjecture on convex domains in two-dimensional space forms\footnote{\footnotesize The work is supported by National Natural Science Foundation of China (No.12001276, No.12326303, No.12431018).}}

\author{Haiyun Deng$^{1}$, Xuyong Jiang$^{2}$, Xiaoping Yang$^{3}$\footnote{\footnotesize E-mail: hydeng@nau.edu.cn(H. Deng), jiangxy@cczu.edu.cn(X. Jiang), xpyang@nju.edu.cn(X. Yang)} \\[12pt]
	\emph {\scriptsize $^{1}$Department of Applied Mathematics, Nanjing Audit University, Nanjing, 211815, China;}\\
	\emph {\scriptsize $^{2}$Department of Mathematics, Changzhou University, Changzhou, 213164, China;}\\
	\emph {\scriptsize $^{3}$Department of Mathematics, Nanjing University, Nanjing, 210093, China} }
\date{}

\begin{document}
	
	\maketitle
	
	\begin{abstract}
    In this paper, we study critical points of second Neumann eigenfunctions on convex domains in two-dimensional space forms from three complementary perspectives: spectral and geometric criteria for the absence of interior critical points, quantitative localization of possible critical points, and explicit bounds for the hot spots constant. Precisely, we first establish monotonicity-radius localization principles in $\mathbb S^2$ and $\mathbb H^2$. As a consequence, we obtain a unified diameter criterion for convex domains in these two space forms: if
$\mu_2(\Omega)D^2\le j_{1,1}^2,$
then every second Neumann eigenfunction on $\Omega$ has no interior critical points.
		Moreover,  when interior critical points may exist, we derive explicit quantitative
		location restrictions in terms of the domain  diameters in $\mathbb{S}^{2}$ and $\mathbb{H}^{2}$.
		Finally, we develop an analytic approach to  study the  \emph{hot spots constant} $\mathfrak{C}(\Omega)$ on convex domains. For planar convex domains, we improve the Euclidean upper bound to
			$\mathfrak{C}(\Omega)<1.48$. We further obtain the corresponding hot spots constants for
		convex domains in non-Euclidean space forms, that is, $\mathfrak{C}(\Omega) < 4$ for $\Omega\subset\mathbb{S}^{2}$ contained in a hemisphere;
			$\mathfrak{C}(\Omega) < 11.2$ for $\Omega\subset\mathbb{H}^{2}$. Our proofs combine the properties of Bessel and Legendre functions, eigenvalue estimates, and Green's identity.
		Our results quantitatively measure ``how wrong'' the  \emph{hot spots conjecture} can be. 
	\end{abstract}

	{\bf Keywords:} Hot spots; Neumann eigenfunctions; Convex domain; Space forms
	
	{{\bf 2020 Mathematics Subject Classification.} Primary: 35B38, 58J50; Secondary: 35J25, 35J05}

	\section{Introduction}
	
	In this paper, we investigate the following Neumann eigenvalue problem
	\begin{equation}\label{1.1}\begin{array}{l}
			\left\{
			\begin{array}{l}
				\triangle u+\mu u=0~~\mbox{in}~~\Omega,\\
				\frac{\partial u}{\partial n}=0 ~~\mbox{on}~~\partial\Omega,
			\end{array}
			\right.
	\end{array}\end{equation}
	where $\Omega$ is a bounded domain in a two-dimensional simply connected space form and $n$ denotes the unit outward normal vector to $\partial\Omega.$ The corresponding eigenvalues $\mu_j(\Omega)$ of \eqref{1.1} are nonnegative and can be arranged as
	$$0=\mu_1<\mu_2\le\mu_3\le\cdots\to\infty.$$

	The hot spots conjecture, formulated by J. Rauch in 1974 \cite{Rauch1975},
	asserts that for a given bounded Lipschitz domain $\Omega\subset \R^n$,
	solutions of a Neumann heat equation with generic initial conditions have
	their extrema tending toward $\partial\Omega$ as time tends to infinity.
	In spectral terms, this is equivalent to requiring that the second Neumann eigenfunction $u$ of  the Laplacian attains its maximum and minimum (the ``hot spots'') exclusively on the boundary of the domain. For general domains in $\mathbb{R}^n$, the conjecture is not true. The conjecture has been proven false for certain planar multiply connected domains \cite{Burdzy1999Ann,Burdzy2005Duke} and convex domains in higher dimensions (for sufficiently large dimension) \cite{deDios2024Arxiv}. In light of these counterexamples, the conjecture is now commonly stated as follows.
	
	\begin{conjecture}\label{conjecture1}
		The second Neumann eigenfunction attains its global maximum and minimum only on the boundary
		of the domain $\Omega$ provided that $\Omega$ is a convex domain in $\mathbb{R}^2$, or a simply connected planar domain.
	\end{conjecture}

	At the same time, many positive results have been established for the hot spots conjecture in special classes of domains. The first positive result was due to Kawohl for cylindrical domains (see \cite[Corollary 2.15]{Kawohl1985} ); he further noted that the conjecture holds for specific domains including parallelepipeds, balls, and annuli (see \cite[page 46]{Kawohl1985}). In 1999, Ba\~{n}uelos and Burdzy made the first major breakthrough using probabilistic methods in \cite{Banuelos1999JFA}. For the simple second Neumann eigenvalue, they coupled Brownian motion with the eigenfunction via deformation of initial conditions, and combined this with eigenvalue estimates, to establish the conjecture for a class of planar convex domains, including certain special lip domains—where a “lip domain” is defined as a planar region bounded between the graphs of two functions with Lipschitz constants at most 1. In 2000, Jerison and Nadirashvili introduced the method of domain deformation in \cite{Jerison2000JAMS}, proving the hot spots conjecture via a continuity method on planar domains with two axes of symmetry. This provided a systematic analytical approach to the hot spots conjecture.  Another significant contribution came from Atar and Burdzy in \cite{Atar2004JAMS}, who established both the simplicity of the second Neumann eigenvalue and the validity of the hot spots conjecture for lip domains. 
	
	A decade later, in 2012, a major online collaborative initiative—dubbed ``Polymath 7" \cite{Polymath2012}—was formally launched with the explicit aim of resolving the conjecture for acute triangles. Among the project's most notable outputs was a revealing numerical observation: the extremal points appeared to be located precisely at the two endpoints of the triangle's longest side. The conjecture for obtuse or right triangles was addressed in \cite{Atar2004JAMS, Banuelos1999JFA} and, more recently, in \cite{Rohleder2021Arxiv}; the isosceles case follows by combining \cite{Laugesen2010JDE} and \cite{Miyamoto2009JMP}. In 2015, Siudeja \cite{Siudeja2015MathZ} extended the result to acute triangles with the smallest angle at most $\pi/6$, building on the ideas of Miyamoto  \cite{Miyamoto2009JMP}. For non-equilateral triangles, the simplicity of the second Neumann eigenvalue has been established for obtuse and right triangles in \cite{Atar2004JAMS} and for acute triangles in \cite{Siudeja2015MathZ}. A significant recent advance is due to Judge and Mondal \cite{Judge2020Ann,Judge2022Ann}, who showed that the second Neumann eigenfunction on a triangle has no interior critical points, implying that the global extrema must lie entirely on the boundary. In 2026, Chen, Gui, and Yao \cite{Chen2026Invent} proved that in any triangle, the second Neumann eigenfunction $u$ has at most one non-vertex critical point, and $u$ is monotonic in a specific direction. Recently, Deng et al. \cite{Deng2026Arxiv} studied the symmetry properties and non-vertex critical points of the second Neumann eigenfunction $u$, as well as the multiplicity of the corresponding eigenvalue, on isosceles trapezoids and kites. Consequently, they proved that the hot spots conjecture holds for all simple-eigenvalue domains they considered. For related results, see \cite{Chen2019JMPA,Chen2026AMPA,Deng2025AMPA,Hatcher2025SIAMJMA,Hatcher2024PAMS,Judge2025AMQ,Judge2022CPDE,Kennedy2026TAMS,Langford2023MathAnn} and references therein.
	
	It remains a major open problem for planar convex domains, despite many special cases having been solved.
	For bounded convex domains in the Euclidean plane $\R^2$, Miyamoto \cite{Miyamoto2009JMP} proved that the hot spots conjecture holds for planar convex domains under the condition of $\mu_2 D^2 \le j_{1,1}^2$, as well as for a certain class of planar convex domains.
	Recently, this approach has been generalized in two distinct directions. First, Rohleder \cite{Rohleder2026CAOT} established a quantitative location restriction for planar convex domains, showing that any interior critical point of the second Neumann eigenfunction must be sufficiently close to the boundary, relative to the domain's diameter, thereby creating a ``hot spots free'' subregion. Steinerberger showed that the maximum and minimum are attained close to the points achieving maximum distance by using probabilistic methods in \cite{Steinerberger2020CPDE}. Second, for spaces with constant curvature, Hatcher \cite{Hatcher2026Arxiv} successfully adapted Miyamoto's method to convex domains in the hyperbolic plane $\mathbb{H}^2$. Hatcher showed that if the second Neumann eigenvalue $\mu_2(\Omega) \leq 1/4$ for a convex domain $\Omega\subset\mathbb{H}^2$, then the second Neumann eigenfunction has no interior critical points.

	It is worth noting that, in view of the counterexamples mentioned above, the interior extremum can exceed the boundary extremum in magnitude. To quantify the maximum possible deviation between the interior and boundary extrema, Steinerberger \cite{Steinerberger2022RMI}  (see also \cite{Kleefeld2021ACM}) introduced the \emph{hot spots constant}
	\begin{equation}\label{eq:HS-constant}
		\mathfrak{C}(\Omega):=\sup\left\{\frac{||u||_{L^\infty(\Omega)}}{||u||_{L^\infty(\partial\Omega)}} \Big| u\text{ is a second Neumann eigenfunction}\right\}.
	\end{equation}
	This constant answers the question: ``How `wrong' can the Hot Spots conjecture be?''.  Hot spots conjecture implies that $\mathfrak{C}(\Omega)=1$ for planar convex domains $\Omega$.
	Thus, establishing explicit upper bounds for $\mathfrak{C}(\Omega)$ is of independent interest even when the exact value $1$ remains out of reach.
	
	Steinerberger \cite{Steinerberger2022RMI} proved that for any bounded connected domain in $\mathbb{R}^n$ with smooth boundary, $\mathfrak{C}(\Omega)\le 58.35$, and the bound improves as the dimension grows, asymptotically approaching $\sqrt{e^e}\approx 3.89$. Subsequently, Mariano, Panzo and Wang \cite{Mariano2023PA} extended this framework to bounded Lipschitz domains and significantly improved the bounds. In dimension $2$, $\mathfrak{C}(\Omega)\le 5.1043$, while for asymptotically large dimension the upper bound can be improved to  $\sqrt{e}\approx 1.6487$. Both of these results rely heavily on probabilistic machinery, including heat kernel estimates and reflected Brownian motion. Recently, de Dios Pont, Hsu, and Taylor \cite{deDios2025Arxiv} identified the sharp extremal value $3.1642$ for connected Lipschitz domains in dimension two and showed that the corresponding dimension-dependent extremal ratio converges to $\sqrt e$ as $d\to\infty$.

	\subsection{Main results}
	The Euclidean problem has been extensively studied, whereas the curved settings require additional control of radial solutions to the Helmholtz equation.
	Our contributions concern monotonicity-radius localization in the spherical and hyperbolic settings,
	explicit geometric consequences, and explicit hot spots constants on the three two-dimensional space forms.
    
	Throughout this paper, we assume that $\Omega$ is convex in the space forms,
		where a domain $\Omega$ is convex if any two points in $\Omega$ can be joined by a length-minimizing geodesic contained in $\Omega$.
		We also assume that the boundary $\partial \Omega$ is piecewise $C^{1,\alpha}$.
		All Neumann conditions and normal derivatives at nonsmooth boundary points are interpreted in the weak trace sense, and statements involving $\partial_n$ on $\partial\Omega$ are understood at almost every regular boundary point.
		We now state our first main result, which gives a unified diameter criterion for the absence of interior critical points on convex domains in the two curved space forms $\mathbb S^2$ and $\mathbb H^2$.
	
	\begin{theorem}\label{thm:intro-critical-free-space-forms}
		Let $u$ be a second Neumann eigenfunction to \eqref{1.1} on a domain $\Omega$, $\mu=\mu_2(\Omega)$, and $D$ be the diameter of $\Omega$. If $\mu D^2\leq j_{1,1}^2$, and $\Omega$ satisfies one of the following conditions:
		\begin{enumerate}
			\item[(S)] $\Omega\subset\Sph$ is a convex domain contained in a hemisphere,
			\item[(H)] $\Omega\subset\Hyp$ is a bounded convex domain,
		\end{enumerate}
		then $u$ has no critical point in $\Omega$, where $j_{1,1}$ is the first positive root of the Bessel function $J_1$.
	\end{theorem}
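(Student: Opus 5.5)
We adapt Miyamoto's argument to the curved setting. Suppose, for contradiction, that $u$ has a critical point $p\in\Omega$. Let $r=\dist(p,\cdot)$ be the geodesic distance from $p$, with geodesic polar coordinates $(r,\theta)$ centered at $p$. Let $R$ be the radial solution of the Helmholtz equation $\Delta v+\mu v=0$ that is regular at $p$ with $R(0)=1$. Concretely:
\begin{itemize}
\item on $\Sph$ we take $R(r)=P_\nu(\cos r)$, where $\nu(\nu+1)=\mu$ and $P_\nu$ is the Legendre function;
\item on $\Hyp$ we take $R(r)=P_{-1/2+i\tau}(\cosh r)$, the conical function with $\tau^2=\mu-1/4$ (or the real-index analogue when $\mu\le 1/4$).
\end{itemize}
The key auxiliary object is the \emph{monotonicity radius} $r_*(\mu)$, the first positive zero of $R'$, which is the analogue of $j_{1,1}/\sqrt\mu$ for $J_0(\sqrt\mu\, r)$. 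The first step is a comparison lemma showing that $r_*(\mu)\ge j_{1,1}/\sqrt{\mu}$ in both curvatures, so that $R'<0$ on $(0,j_{1,1}/\sqrt\mu)\supseteq(0,D]$ under the hypothesis $\mu D^2\le j_{1,1}^2$.

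To prove this lemma, write the ODE as $R''+k(r)R'+\mu R=0$, where $k=\cot r$ on $\Sph$ and $k=\coth r$ on $\Hyp$. Differentiating shows that $w=R'$ satisfies a Bessel-type equation with a potential. A Sturm comparison against $J_1(\sqrt\mu\,r)$, which solves the flat analogue, then works after a Liouville transformation $w=\phi\,y$ with $\phi=(\sin r)^{-1/2}$ or $(\sinh r)^{-1/2}$. This reduces the problem to comparing the effective potentials $\mu+\tfrac14\mp\frac{3/4}{\sin^2 r}$ or $\sinh^2 r$ against $\mu-\frac{3/4}{r^2}$.

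On $\Hyp$ the comparison has the correct sign since $\sinh r\ge r$. On $\Sph$ the term $\tfrac14$ and the inequality $\sin r\le r$ push in opposite directions. There we must exploit the restriction $D\le\pi/2$ from hemisphere-containment, possibly combined with the spherical Neumann bound $\mu\ge 2$-type estimates, or argue directly with Legendre function zeros. I expect this to be the main obstacle: pinning down the sign of the spherical comparison on the relevant range of $r$.

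Next, since $p$ is a critical point, the functions $u(x)-u(p)R(r(x))$ and the Hessian-adjusted functions vanish to second order at $p$. Following Miyamoto, consider the angular Fourier modes of $u$ around $p$. The mode $u_1(r,\theta)=a(r)\cos\theta+b(r)\sin\theta$ satisfies the radial equation with angular index $1$, and $\nabla u(p)=0$ forces $a,b$ to vanish to second order. Instead, we use Green's identity on $\Omega$ with the test function $R(r)$. Since $\int_\Omega u\,R\,dA$ is handled by $\Delta R+\mu R=0$, Green's identity gives
\[\int_{\partial\Omega} u\,\partial_n R\,ds=\int_{\partial\Omega}(u\,\partial_nR-R\,\partial_nu)\,ds=0.\]
A cleaner route uses the one-parameter family of functions $f_e=\langle\nabla R,e\rangle$-type. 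Concretely, for each direction $e$ at $p$, take the solution $\Phi_e$ of the Helmholtz equation of the form $-R'(r)\cos(\theta-\theta_e)$, which is positive on the half-disk in direction $e$. Combining this with the vanishing of $\nabla u(p)$ and a nodal-domain argument, in which the nodal line of $u$ through the convex domain meets $\partial\Omega$ at exactly two points, one chooses $e$ so that $u\Phi_e$ has a sign on $\partial\Omega$.

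The contradiction comes from convexity. Geodesic convexity gives $\partial_n r\ge 0$ on $\partial\Omega$, and the radial monotonicity $R'<0$ on $(0,D]$ from the lemma gives $\partial_n R\le0$ with strict inequality on a set of positive measure. Then $\int_{\partial\Omega}u\,\partial_n(\cdot)\,ds$ against the sign-controlled test function is strictly signed, while Green's identity forces it to vanish. In the equality case $\mu D^2=j_{1,1}^2$, strictness still holds because $r<D$ on all but a null set of $\partial\Omega$ seen from an interior point $p$. Hence $u$ has no critical point in $\Omega$.
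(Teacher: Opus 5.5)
\paragraph{The contradiction step has a genuine gap.}
The identity $\int_{\partial\Omega}u\,\partial_n\Phi\,ds=0$ holds for every Helmholtz solution $\Phi$ regular on $\overline\Omega$ and every base point $p$. It therefore cannot detect the hypothesis $\nabla u(p)=0$. Criticality would have to enter through your choice of $e$, and you give no mechanism for that.

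The sign you invoke also belongs to the wrong function. $R'<0$ and $\partial_n r\ge0$ control $\partial_n R$. For $\Phi_e=-R'(r)\cos(\theta-\theta_e)$, which is indeed a Helmholtz solution since $R'$ solves the index-one radial equation, one has
\[
\partial_n\Phi_e=-R''(r)\cos(\theta-\theta_e)\,\partial_n r+R'(r)\sin(\theta-\theta_e)\,\partial_n\theta .
\]
Here $R''$ changes sign on $(0,r_*)$: it equals $-\mu/2$ at $0$ and $-\mu R(r_*)>0$ at $r_*$. The angular term has no sign at all. So a direction with $u\Phi_e\ge0$ on $\partial\Omega$ would still give no sign for $u\,\partial_n\Phi_e$. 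Moreover, in your two-point nodal picture such a direction exists only if both boundary endpoints of the nodal line of $u$ lie on one geodesic through $p$.

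\paragraph{The missing idea.}
It is the function you wrote down and then abandoned, used as in Proposition \ref{prop:radial-criterion}.
\begin{enumerate}
\item[(i)] First show $u(p)\neq0$. This needs $\mu_2(\Omega)\le\lambda_1(\Omega)$ (Ashbaugh--Levine on $\Sph$, Mazzeo on $\Hyp$), which excludes nodal loops enclosing a nodal domain compactly contained in $\Omega$, together with Courant's theorem (Lemmas \ref{lem:no-loop} and \ref{lem:no-nodal-critical}). This input is absent from your plan.
\item[(ii)] With $u(p)>0$, put $w=u(p)R(r)-u$. Since $p$ is interior, $\max_{\partial\Omega}r<D\le j_{1,1}/\sqrt\mu\le r_*$. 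This gives $R'<0$ on the relevant range and also disposes of the equality case.
\item[(iii)] Then $\Delta w+\mu w=0$, $w(p)=0$, $\nabla w(p)=0$. Also $w\not\equiv0$, since otherwise the Neumann condition would force $\partial_n r=0$ a.e.\ on $\partial\Omega$. Hence at least four nodal arcs of $w$ meet at $p$, and the no-loop property yields two distinct positive nodal domains $\Omega_1,\Omega_2$.
\item[(iv)] Convexity gives $\partial_n w=u(p)R'(r)\,\partial_n r\le0$ a.e.\ on $\partial\Omega$, so $\int_{\Omega_i}|\nabla w|^2\le\mu\int_{\Omega_i}w^2$.
\item[(v)] A combination of $w\chi_{\Omega_1}$ and $w\chi_{\Omega_2}$ orthogonal to constants has Rayleigh quotient at most $\mu_2(\Omega)$. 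It is therefore a second eigenfunction vanishing on an open set, contradicting unique continuation.
\end{enumerate}
The boundary sign enters only through this variational inequality, not through a Green identity.

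\paragraph{The lemma.}
Your lemma is true, and the Liouville-substitution strategy is the paper's (Lemmas \ref{lem:tau-lower-bound} and \ref{lem:hyp-tau-lower-bound3}), but your comparison analysis is off. With $y=(\sin r)^{1/2}R'$, resp.\ $y=(\sinh r)^{1/2}R'$, the equations are $y''+\bigl(\mu+\frac14-\frac{3}{4\sin^2r}\bigr)y=0$ and $y''+\bigl(\mu-\frac14-\frac{3}{4\sinh^2r}\bigr)y=0$. These are compared against $y''+\bigl(\mu-\frac{3}{4r^2}\bigr)y=0$. Your hyperbolic potential has both signs wrong.
\begin{itemize}
\item On $\Hyp$, the inequality $\sinh r\ge r$ pushes the wrong way. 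The $-\frac14$ must compensate, via $\frac{1}{\sinh^2r}\ge\frac1{r^2}-\frac13$.
\item On $\Sph$, the obstacle you anticipate is removed by $\frac1{\sin^2r}\ge\frac1{r^2}+\frac13$ on $(0,\pi)$. Neither $D\le\pi/2$ nor a lower bound on $\mu$ is needed.
\end{itemize}
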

	
	When the condition that there are no critical points is not met, we establish a universal location restriction for potential interior critical points relative to the domain diameters in $\Sph$ and $\Hyp$.

	\begin{theorem}\label{thm:intro-location-space-forms}
		Let $u$ be a second Neumann eigenfunction to \eqref{1.1} on a convex domain $\Omega$. Let 
		$D$ be the diameter of $\Omega$. If $p\in\Omega$ is an interior critical point of $u$, then
		\begin{enumerate}
			\item[(S)] For $\Omega\subset\Sph$ contained in a hemisphere, there exists $q\in\partial\Omega$ such that
			\begin{align} 
				d_{\mathbb S^2}(p,q)\geq \frac{j_{1,1}}{\sqrt{4j_{0,1}^2-D^2/3}} D \ge  \frac{j_{1,1}}{2j_{0,1}} D \approx 0.7967 D.
			\end{align}
			
			\item[(H)] For $\Omega\subset\Hyp$, there exists $q\in\partial\Omega$ such that
			\begin{align} 
				d_{\Hyp}(p,q) \geq \frac{j_{1,1}}{\sqrt{ 4j_{0,1}^2+D^2/3}}  D.
			\end{align}
		\end{enumerate}
	\end{theorem}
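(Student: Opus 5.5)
The plan is to reduce the statement to two ingredients: the monotonicity-radius localization principle in $\Sph$ and $\Hyp$, from which the paper derives Theorem \ref{thm:intro-critical-free-space-forms}, and an explicit upper bound for $\mu_2(\Omega)$ in terms of $D$.

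First, I would restate the localization principle in the following form. Let $\kappa\in\{1,-1\}$ be the curvature, let $\mu=\mu_2(\Omega)$, and let $R_\kappa(\mu)$ be the first positive zero of $w'$. Here $w(r)$ is the radial profile of the first angular mode, solving
\[
w''+\frac{\operatorname{sn}_\kappa'(r)}{\operatorname{sn}_\kappa(r)}w'-\frac{w}{\operatorname{sn}_\kappa(r)^2}+\mu w=0 ,
\]
with $\operatorname{sn}_1=\sin$ and $\operatorname{sn}_{-1}=\sinh$. In the sphere $w$ is an associated Legendre function of order $1$; in the plane it reduces to $J_1(\sqrt\mu\, r)$. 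The principle says that if $p\in\Omega$ is an interior critical point of $u$, then $\Omega\not\subset B(p,R_\kappa(\mu))$. The mechanism is Miyamoto's: take geodesic polar coordinates at $p$, form the first Fourier mode of $u$ in the angle, compare it with $w$ via Green's identity, and use the Neumann condition together with convexity to get a sign contradiction. I take this principle as already established in the paper, since the diameter criterion is derived from it. Choosing $q\in\partial\Omega$ farthest from $p$, the principle gives $d(p,q)\ge R_\kappa(\mu)$, because otherwise $\overline\Omega\subset B(p,d(p,q))\subset B(p,R_\kappa(\mu))$.

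Second, I need a lower bound on $R_\kappa(\mu)$ that is uniform in the curvature. The claim I expect is $R_\kappa(\mu)\ge j_{1,1}/\sqrt{\mu}$ in both $\Sph$ and $\Hyp$, which is precisely the content behind the unified criterion $\mu D^2\le j_{1,1}^2$. If the paper's lemma instead yields a curvature-corrected radius, for instance $j_{1,1}/\sqrt{\mu\mp c}$, I would carry that correction through the final step instead.

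Third, and this is the step I expect to be the main obstacle, I need an eigenvalue upper bound of the form
\[
\mu_2(\Omega)\,D^2\le 4j_{0,1}^2-\kappa\,\frac{D^2}{3}.
\]
This is a curved analogue of the Euclidean estimate $\mu_2\le 4j_{0,1}^2/D^2$. To prove it, I would take a diameter pair $a,b\in\overline\Omega$ with $d(a,b)=D$ and the two disjoint geodesic balls $B(a,D/2)$ and $B(b,D/2)$. I would use the test functions $\varphi_a=f(d(a,\cdot))$ and $\varphi_b=f(d(b,\cdot))$, where $f=J_0(2j_{0,1}r/D)$ is cut off at $r=D/2$, and take the combination $\varphi_a-t\varphi_b$ that is orthogonal to constants. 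In the Rayleigh quotient, convexity makes $\Omega\cap B(a,D/2)$ star-shaped about $a$, which reduces the quotient to one-dimensional integrals along rays. The only change from the Euclidean case is that the area density becomes $\operatorname{sn}_\kappa(r)$ in place of $r$. Expanding $\operatorname{sn}_\kappa(r)=r-\kappa r^3/6+\cdots$ should produce the correction $\mp D^2/3$: the positive curvature of the sphere lowers the bound and the negative curvature of $\Hyp$ raises it. Making this expansion rigorous as an inequality valid for all admissible $D$ (for instance $D\le\pi$ in the hemisphere) is the delicate part. If a direct inequality on the Bessel quotient is not available, I would fall back on monotonicity of $\operatorname{sn}_\kappa(r)/r$ combined with a Cheng-type comparison.

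Combining the three steps gives $d(p,q)\ge j_{1,1}/\sqrt\mu\ge j_{1,1}D/\sqrt{4j_{0,1}^2-\kappa D^2/3}$, which is both (S) and (H). In case (S), dropping the term $-D^2/3$ yields the further bound $\frac{j_{1,1}}{2j_{0,1}}D\approx 0.7967D$.
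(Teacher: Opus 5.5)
Your skeleton matches the paper's proof. It has three steps: $R_p\ge\tau_\mu$ from the localization theorems, then $\tau_\mu\ge j_{1,1}/\sqrt\mu$, then $\mu D^2\le 4j_{0,1}^2-\kappa D^2/3$. However, two of your steps fail as you have set them up.

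First, your radius is off by one derivative. For the order-one profile $w$, the first zero of $w'$ is $j'_{1,1}/\sqrt\mu\approx 1.84/\sqrt\mu$ in the flat case, and it is asymptotic to this in $\Sph$ and $\Hyp$ as $\mu\to\infty$. So your Step 2 claim $R_\kappa(\mu)\ge j_{1,1}/\sqrt\mu$ is false; its flat limit reads $j'_{1,1}\ge j_{1,1}$. The best your radius could give is a constant near $j'_{1,1}/(2j_{0,1})\approx 0.38$, not $0.7967$.

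The radius in Theorems \ref{thm:location} and \ref{thm:hyperbolic-location} is the monotonicity radius $\tau_\mu$ (resp.\ $\bar\tau_\mu$) of the \emph{zero-mode} radial solution $j_\mu(r)=\mathsf P_\nu(\cos r)$ (resp.\ $k_\mu$), that is, the first zero of $j_\mu'$. Since $-j_\mu'$ solves exactly your order-one equation, this is the first zero of $w$ itself, not of $w'$.

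The mechanism is also not a Fourier-mode comparison. The paper uses $u(p)j_\mu(d(p,\cdot))-u$, which vanishes to second order at $p$. It has no closed nodal loops because $\mu_2\le\lambda_1$, so it has two positive nodal domains. Its normal derivative is $\le0$ on $\partial\Omega$ by convexity and $j_\mu'<0$ on $(0,R_p]$, and this contradicts the variational characterization of $\mu_2$. The bound $\tau_\mu\ge j_{1,1}/\sqrt\mu$, which you only assert, is Lemmas \ref{lem:tau-lower-bound} and \ref{lem:hyp-tau-lower-bound3}. There $g=-j_\mu'$ is treated as the first Dirichlet eigenfunction on $(0,\tau_\mu)$ of $-(\operatorname{sn}g')'+g/\operatorname{sn}=\mu\,\operatorname{sn}\,g$, and the substitution $y=\sqrt{\operatorname{sn}}\,v$ produces a potential bounded below by $3/(4r^2)$. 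That argument uses the Dirichlet condition $g(\tau_\mu)=0$, i.e.\ a zero of $w$.

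Second, your Step 3 with the transplanted profile $J_0(2j_{0,1}r/D)$ cannot produce the exact term $-\kappa D^2/3$. Its first-order curvature correction is indeed $-\kappa/3$. But a second-order expansion in Liouville variables shows that the Rayleigh quotient of $J_0(j_{0,1}r/R)$ on a geodesic disk of radius $R$ equals $j_{0,1}^2/R^2-\kappa/3+cR^2+O(R^4)$ with $c=\frac{1}{270}\bigl(1-2/j_{0,1}^2\bigr)>0$. That is strictly above the bound you need, and in $\Hyp$ the quotient even tends to $\frac12$ as $R\to\infty$. Moreover, your profile does not solve the curved radial equation, so reducing to rays of varying length would require control of every truncated quotient, not just the full one.

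Lemmas \ref{lem:Sph-mu-estimate} and \ref{lem:hyperbolic-diameter-upper2} instead use the true first Dirichlet eigenfunctions $\eta_i$ of $\mathcal B(x_i,D/2)$, which are radial, decreasing, exact solutions. Integration by parts on $\Omega\cap\mathcal B(x_i,D/2)$, with $\partial_n\eta_i\le0$ on $\partial\Omega$, gives $\mu_2\le\Lambda(D/2)$. The paper then cites $\Lambda(r)\le j_{0,1}^2/r^2-\frac13$ on $\Sph$ (Baginski) and $\Lambda(r)\le j_{0,1}^2/r^2+\frac13$ on $\Hyp$ (Berge). These ball bounds follow from the corrected trial function $\sqrt{r/\operatorname{sn}_\kappa r}\,J_0(j_{0,1}r/R)$ together with $\sin^{-2}r\ge r^{-2}+\frac13$ and $\sinh^{-2}r\ge r^{-2}-\frac13$. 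The factor $\sqrt{r/\operatorname{sn}_\kappa r}$ is exactly what your Step 3 is missing.
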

	Finally, we investigate the hot spots constant $\mathfrak{C}(\Omega)$ for convex domains. We develop an analytic approach based on Green's identity and singular fundamental solutions of the Helmholtz equation. This method not only extends quantitative estimates for the hot spots constant to non-Euclidean space forms, but also sharpens the general upper bound $3.1642$ for planar connected Lipschitz domains to
$\mathfrak C(\Omega)<1.48$ for planar convex domains. Compared with the existing bounds for Lipschitz domains \cite{Steinerberger2022RMI,Mariano2023PA,deDios2025Arxiv}, our estimate quantitatively demonstrates that convexity significantly restricts the possible deviation from the hot spots conjecture.
	
	\begin{theorem}\label{thm:intro-hot-spots-constant}
		Let $\Omega$ be a bounded convex domain in a two-dimensional space form.
		\begin{enumerate}
			\item[(E)] For $\Omega\subset\R^2$, we have
				\[
				\mathfrak{C}(\Omega)
				\le \frac{\pi}{2}\left(\frac{7}{10}
				+\frac{(j'_{1,1})^2}{14}\right)
				<1.48,
				\]
				where $j'_{1,1}$ is the first positive root of $J_1'$.
				
				\item[(S)] For $\Omega\subset\Sph$ contained in a hemisphere,  then $\mathfrak{C}(\Omega) < 4$.
				
				\item[(H)] For $\Omega\subset\Hyp$, then $\mathfrak{C}(\Omega) <11.2$.
		\end{enumerate}
	\end{theorem}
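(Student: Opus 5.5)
We prove Theorem~\ref{thm:intro-hot-spots-constant} by a Green's-identity representation of $u$ at an interior point, using radial solutions of the Helmholtz equation as test functions. Normalize $u$ so that $\|u\|_{L^\infty(\partial\Omega)}=1$, and let $p\in\Omega$ be a point where $|u|$ attains its interior maximum, say $u(p)=M>1$ (otherwise there is nothing to prove). Write $r=d(p,\cdot)$ for the geodesic distance. Let $\phi(r)$ be the radial solution of $\Delta\phi+\mu\phi=0$ that is regular at $p$: $J_0(\sqrt\mu\, r)$ in $\R^2$, and the corresponding Legendre function $P_\nu(\cos r)$ or $P_{-1/2+i\tau}(\cosh r)$ in $\Sph$ and $\Hyp$. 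Also let $\psi$ be the singular partner (Bessel $Y_0$, or the Legendre $Q$-type function), normalized so that $\psi\sim\frac{1}{2\pi}\log r$ near $p$.

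First, apply Green's second identity to $u$ and $\psi$ on $\Omega\setminus B_\varepsilon(p)$ and let $\varepsilon\to0$. Since $\partial_n u=0$, this gives
\begin{equation}
u(p)=\int_{\partial\Omega}u\,\partial_n\psi\,ds .
\end{equation}
The same identity applied to $u$ and $\phi$ gives the orthogonality relation $\int_{\partial\Omega}u\,\partial_n\phi\,ds=0$. We may therefore replace $\psi$ by $\psi+c\phi$ for any constant $c$, and we will choose $c$ to minimize the resulting bound.

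Next, by convexity, $\partial_n r\ge 0$ on $\partial\Omega$. Writing $d\theta=\partial_n r\,ds/\sinh r$, or $ds/\sin r$, or $ds/r$ according to the space form, the boundary measure $\partial_n r\, ds$ parametrizes $\partial\Omega$ by the angle $\theta\in[0,2\pi)$ seen from $p$, with the appropriate Jacobian. This yields
\begin{equation}
M\le \int_0^{2\pi}|(\psi+c\phi)'(R(\theta))|\,\rho(R(\theta))\,d\theta,
\end{equation}
where $R(\theta)$ is the distance from $p$ to $\partial\Omega$ in direction $\theta$ and $\rho$ is the area density ($r$, $\sin r$, or $\sinh r$). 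The problem thus reduces to a one-variable bound on the function $g(R)=|(\psi+c\phi)'(R)|\rho(R)$ over the admissible range of $R$. The Wronskian gives $\psi'\rho\to\frac{1}{2\pi}$ as $R\to0$, so for small $R$ the integrand is close to $1/(2\pi)$ and the integral over the full circle is close to $1$.

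The remaining inputs are the eigenvalue estimates. Combined with the diameter, Payne--Weinberger in $\R^2$ and its curved analogues give $\mu D^2\le$ (explicit constant), which restricts $\sqrt\mu\,R\le\sqrt\mu\,D$ to a bounded interval. In the Euclidean case we then need $\sup_{0\le t\le T}|t(Y_0'+cJ_0')(t)|\cdot\frac{\pi}{2}$. We expect this to be bounded via $J_1$, $Y_1$ together with an elementary polynomial majorant; the constants $\tfrac{7}{10}$ and $(j'_{1,1})^2/14$ suggest a bound of the form $\tfrac{7}{10}+\tfrac{t^2}{14}$ evaluated at $t\le j'_{1,1}$, after using that interior critical points force $\sqrt\mu\,R\le j'_{1,1}$ in the relevant directions. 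This restriction would come from Theorem~\ref{thm:intro-location-space-forms} and its Euclidean analogue.

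For (S) and (H) the same scheme applies with Legendre functions. We control $\phi,\psi$ by comparison with Bessel functions via Sturm-type arguments: $\sin r\le r\le\sinh r$, together with the curvature shift $\mu\pm\frac14$ in the hyperbolic case. The cruder comparisons produce the larger constants $4$ and $11.2$. The main obstacle is this step, namely obtaining a uniform explicit bound on $|(\psi+c\phi)'|\rho$ on the admissible range, especially in $\Hyp$, where $R$ is not bounded in terms of $\mu$ alone and the exponential growth of $\sinh r$ has to be offset by the decay of the Legendre $Q$-function. There I would first try choosing $c$ so that the combination is the decaying solution, and then bound the remainder numerically at endpoint values.
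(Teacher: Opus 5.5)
\textbf{(E) rests on a false restriction.} Your Green's-identity reduction matches the paper's: the representation of $u(p)$, adding $c\phi$ via the orthogonality relation, and converting $\partial_n r\,ds$ into $\rho(r)\,d\theta$. The gap is in the step meant to produce the constants. The claim that ``interior critical points force $\sqrt\mu\,R\le j'_{1,1}$'' is false, and the location theorem says the opposite. The interior maximum point $p$ of $|u|$ is a critical point, so by Miyamoto's Euclidean analogue of Proposition~\ref{prop:radial-criterion}, $\sqrt\mu\,R_p\ge j_{1,1}>j'_{1,1}$. If $\sqrt\mu\,R(\theta)\le j'_{1,1}$ held in every direction, there would be no interior critical point at all, and you cannot drop directions from $\int_0^{2\pi}$.

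The only pointwise information available is $z(\theta)=\sqrt\mu\,r(\theta)\le\sqrt\mu D\le 2j_{0,1}$. This is Cheng's upper bound; Payne--Weinberger is a lower bound on $\mu D^2$. A supremum over that range cannot give $1.48$. By the Wronskian, the interior extrema of $F_c(z)=z(Y_1+cJ_1)$ sit at zeros $z^*$ of $Y_0+cJ_0$ and equal $-2/(\pi J_0(z^*))$. This makes $\frac{\pi}{2}\sup_{(0,2j_{0,1}]}|F_c|$ at least about $2.48$ for every $c$.

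The missing idea is to keep the $\theta$-average instead of taking a supremum. The paper proves the majorant $|F_{1/5}(z)|\le\frac{7}{10}+\frac{z^2}{14}$ on all of $(0,2j_{0,1}]$ and integrates it in $\theta$. It then uses the area formula $\frac12\int_0^{2\pi}z(\theta)^2\,d\theta=\mu|\Omega|$ together with the Szeg\H{o}--Weinberger inequality $\mu|\Omega|\le\pi(j'_{1,1})^2$. So $(j'_{1,1})^2$ bounds the mean of $z^2$, not any pointwise value of $z$.

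\textbf{(H): the pointwise reduction gives no uniform constant.} The fix you propose does not exist. After Hatcher's result only $\mu>\frac14$ matters, so $\nu=-\frac12+i\rho$. The substitution $k=(\sinh r)^{-1/2}y$ turns the radial equation into $y''+(\mu-\frac14+\frac{1}{4\sinh^2 r})y=0$. Hence every real radial solution is $e^{-r/2}$ times a non-decaying oscillation, and $|(\mathcal Q_\nu'+c\mathcal P_\nu')(\cosh r)|\sinh^2 r$ grows like $\sqrt\mu\,e^{r/2}$ for every $c$. There is no decaying combination to select. Meanwhile $\mu\le\frac13+4j_{0,1}^2/D^2$ is vacuous for $\mu\le\frac13$, so $\sup_{0<r<D}$ is not controlled.

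The paper instead uses an area mechanism that your plan lacks:
\begin{enumerate}
\item It bounds the kernel by $2\sqrt\mu\,e^{r/2}$ via the integral representation of $Q_\nu$.
\item It keeps the $\theta$-average and applies Cauchy--Schwarz with $\int_0^{2\pi}(\cosh r(\theta)-1)\,d\theta=\Area(\Omega)$, giving $\mathfrak C(\Omega)\le2\sqrt{\mu(\Area(\Omega)+2\pi)/\pi}$.
\item It bounds the area. If $\Area(\Omega)\ge169\pi/14$, Hatcher's corollary (comparison with the disk having $\mu_2=\frac14$) rules out interior critical points. For $2<\mu\le12$, the hyperbolic Neumann isoperimetric inequality bounds the area in terms of $\mu$.
\item Only for $\mu>12$, where $\sqrt\mu D<5$, does it use a pointwise bound. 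There $G_\nu(t)$ solves a perturbation of $G''-G'/t+G=0$, and the Green kernel from $tJ_1,tY_1$ together with Gronwall gives $<11.2$.
\end{enumerate}

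\textbf{(S).} Here your pointwise route is the one the paper takes, but ``Sturm comparison with $\sin r\le r$'' yields no explicit number. The paper disposes of $\mu\le2$ by Remark~\ref{rem:spherical-low-mu-recovery}. For $1\le\nu\le8$ it uses that $h_c^2+h_c'^2/\mu$ is maximal at $r=\pi/2$ and optimizes in $c$ via the Legendre Wronskian and Gautschi's inequality. For $\nu\ge8$ it uses the same Bessel-perturbation and Gronwall argument as in (H).
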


	\subsection{Sketch of the proofs of the main theorems}
	For the sake of clarity, we now explain the key ideas that are used to prove the main results.
	
	(1) {\bf Main ideas of the proof of Theorems \ref{thm:intro-critical-free-space-forms} and \ref{thm:intro-location-space-forms}}:  We focus primarily on the two-dimensional sphere and hyperbolic plane. In Euclidean space, scale invariance allows the radial Helmholtz equation to be reduced to a universal Bessel equation through the variable $z=\sqrt{\mu_2(\Omega)}r$, where $r$ is the distance function, so that the monotonicity radius can be easily determined. 
	In curved space forms, curvature breaks this scaling reduction and the monotonicity radius depends nontrivially on both the eigenvalue  and the geometry of domain $\Omega$.
	One of the main challenges is therefore to determine the monotonicity radius of the corresponding radial Helmholtz solutions and to derive quantitative estimates for it.
	The main strategy can be divided into three steps.
	
	(a) {\bf  Construct the comparison function}: Let $\Phi_\mu$ denote a radial solution of the Helmholtz equation
	\(-\Delta \Phi=\mu\Phi\) in the corresponding space form,
	(i.e., $j_\mu(r)$ on $\Sph$, and $k_\mu(r)$ on $\Hyp$)
	and we construct the following function:
	\[
	w(q)=u(p)\Phi_\mu(d(p,q))-u(q).
	\]
	Then \(w\) satisfies the same eigenvalue equation as \(u\), and \(p\) is a
	critical nodal point of \(w\). We show that the local structure of nodal sets rules out closed interior
	nodal loops and yields two positive nodal domains of \(w\) meeting
	\(\partial\Omega\).
	
	(b) {\bf Estimate the monotonicity radius of $\Phi_\mu$ and $\mu_2(\Omega)$}:
	Using Cusa–Huygens type inequalities and the eigenvalues of the Sturm–Liouville problem, we derive a lower bound for the monotonicity radius of $\Phi_\mu$ in terms of $\mu$. We also obtain an upper bound for the second Neumann eigenvalue in terms of the diameter.
	
	(c) {\bf Variational characterization}: Assume that $p\in\Omega$ is an interior critical point of a second Neumann
	eigenfunction $u$. Let $R_p=\max_{q\in\partial\Omega} d(p,q)$.
	If \(R_p\) is smaller than the monotonicity radius, 
	then the two positive nodal domains yield two disjoint test functions; combined with convexity and the variational characterization of $\mu_2$, this leads to a contradiction.

	(2) {\bf Main ideas of the proof of Theorem \ref{thm:intro-hot-spots-constant}}: In contrast to the approaches of Mariano--Panzo--Wang \cite{Mariano2023PA} and Steinerberger \cite{Steinerberger2022RMI}, which are based on the heat kernel and reflected Brownian motion, our method is analytic and uses explicit certified special function estimates.
	The main strategy can be divided into two steps.

	(a) {\bf A unified Green-identity reduction}:
	For each space form, we choose a singular radial fundamental solution $\Gamma$ of the Helmholtz equation
	\[
	\Delta\Gamma+\mu\Gamma=-\delta_p \quad \text{on } \Omega,
	\]
	in the distributional sense, and a regular radial solution $\Phi$ of the corresponding homogeneous equation.
	Applying Green's identity to $(u,\Gamma)$ and $(u,\Phi)$, performing computations in geodesic polar coordinates,
	we reduce the estimate for the hot‑spots constant to bounds on three target functions. More precisely, the three functions are:
	\begin{align}
		&\text{the case of }  \mathbb R^2  :\quad z\bigl(Y_1(z)+cJ_1(z)\bigr),  &&\text{where }z=\sqrt\mu r, \\
		&\text{the case of }  \Sph:\quad \bigl(\mathsf{Q}_\nu'(\cos r)+c\mathsf{P}_\nu'(\cos r)\bigr)\sin^2 r, &&\text{where }\nu(\nu+1)=\mu, \\
		&\text{the case of }  \Hyp:\quad \bigl(\mathcal Q_\nu'(\cosh r)+c\mathcal P_\nu'(\cosh r)\bigr)\sinh^2 r, &&\text{where }\nu(\nu+1)=-\mu.
	\end{align}

	(b) {\bf Estimate the target functions}:
	In $\mathbb R^2$, 
	scale invariance reduces the target function to a combination of
	$J_1$ and $Y_1$ expressed in terms of one scale-invariant variable $z=\sqrt\mu r$.
		Using the sharp isoperimetric inequality for the Neumann eigenvalue and the area formula,
		we convert the estimation of the target function into a one-dimensional upper bound problem.
		This idea is formulated in Theorem \ref{thm:hot-spots-constant}. For the explicit choice \(c=1/5\), Lemma \ref{lem:euclidean-explicit-majorant} establishes the bound
		$$
		\left|z\left(Y_1(z)+\frac15J_1(z)\right)\right|
		\le
		\frac7{10}+\frac{z^2}{14}.
		$$
		Combining this quadratic majorant with the preceding reduction, we obtain the universal constant
		stated in the theorem.

	In $\Sph$, the curvature term prevents $\mu$ and $r$ from being combined exactly into one variable. Our proof is divided into three cases based on the magnitude of the eigenvalues. For small eigenvalues, the global monotonicity recorded in Remark \ref{rem:spherical-low-mu-recovery}
		gives the conclusion directly;
		For intermediate eigenvalues,
		we perform energy estimates, use the energy function to obtain a quadratic control of the target function
		with respect to the parameter $c$, and then optimize over the parameter $c$;
	For large eigenvalues, the target function can be regarded as a small perturbation of the equation 
	\[
	G''-\frac1tG'+G=0,
	\]
	whose fundamental solutions are $U(t)=tJ_1(t)$ and $V(t)=-\frac\pi2tY_1(t)$.
	By choosing a suitable constant $c_\nu$, we set $H_\nu (t)=V(t)+c_\nu U(t)$
	to approximate the target function.
	The estimate for the target function is then achieved via estimates on $H_\nu$.
	
	In $\Hyp$, the negative curvature creates a further difficulty: $\cosh r$ and $\sinh r$ grow exponentially. We therefore
	divide the proof into geometric and spectral regimes. For sufficiently large area, a known hot-spots result rules out interior critical points, and hence $\mathfrak C(\Omega)=1$.  It remains to treat domains with uniformly bounded area.
	Then, similarly to the case of $\mathbb{S}^2$, we proceed by splitting into three regimes according to the magnitude of the eigenvalues.

	\subsection{Notations and outline of the paper}
	
	Throughout this paper, we adopt the following notation for special functions. 
	
	$J_n$ and $Y_n$ denote the Bessel functions of the first and second kind of order $n$, respectively. 
	$j_{n,k}$ is the $k$-th positive root of $J_n$, and $y_{n,k}$ is the $k$-th positive root of $Y_n$,
	where $n\ge 0$ and $k\ge 1$ are integers. 
	
	For Legendre functions, we carefully distinguish their domains of definition based on the geometric setting.
	$\mathsf{P}_\nu$  and $\mathsf{Q}_\nu$ denote the Ferrers functions
	(Legendre functions on the cut) of the first and second kind of degree $\nu$,
	defined on the interval $(-1, 1)$, which naturally arise in the spherical case.
	In contrast, in the hyperbolic case, we use ${P}_\nu$ and ${Q}_\nu$ to denote Legendre functions of the first and second kind of degree $\nu$, defined on $\mathbb{C} \backslash (-\infty, 1]$. If $\nu$ is real, then ${P}_\nu$ and ${Q}_\nu$ are real on $(1,\infty)$.
	If $\nu=-\frac{1}{2}+i\rho$, where $\rho$ is real,
	the corresponding Legendre functions are also called conical functions.
	In this case, ${P}_\nu$ is real on $(1,\infty)$, whereas $ Q_\nu$ is generally complex there. To work with real-valued radial solutions in the hyperbolic case, we set $\mathcal{P}_\nu(x):=P_\nu(x)$ and $\mathcal{Q}_\nu(x):=\operatorname{Re}\{Q_\nu(x)\}$ on $(1,\infty)$.
	
	Regarding the exact definitions and properties of these special functions, we refer the reader to the NIST Digital Library of Mathematical Functions \cite{DLMF} and the references therein.

	The paper is organized as follows. In Section \ref{sec:location}, we establish the spherical critical point localization theorem (Theorem \ref{thm:location})
	and its consequences, i.e., Theorem \ref{thm:intro-critical-free-space-forms} (S) and
	Theorem \ref{thm:intro-location-space-forms} (S).
	In Section \ref{sec:hyperbolic-location},
	we provide the location of possible critical points in $\Hyp$. In Section \ref{sec:hot spots constant}, we prove Theorem \ref{thm:intro-hot-spots-constant} by  a self-contained analytic argument based on Green's identity and special function estimates.

	\section{Location of possible critical points in $\mathbb S^2$}\label{sec:location}

	The next lemmas describe the topological structure of the nodal set of a second Neumann
	eigenfunction and rule out nodal loops enclosing an interior nodal domain.
	\begin{lemma}\label{lem:no-loop}
		Let $\Omega$ be a domain on $\mathbb S^2$ and  $\mu\leq\lambda_1(\Omega)$.  If a nontrivial function $w$ satisfies
		\[
		-\Delta w=\mu w\quad\text{in }\Omega,
		\]
		then the nodal set $\{w=0\}$ has no loop that encloses a nodal domain compactly contained in $\Omega$.
	\end{lemma}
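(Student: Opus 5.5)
Suppose, for contradiction, that $\{w=0\}$ contains a loop bounding a nodal domain $G$ with $\overline G\subset\Omega$. Here $\lambda_1(\Omega)$ denotes the first Dirichlet eigenvalue of $\Omega$. The plan is to show that $w$ restricted to $G$ is a Dirichlet eigenfunction on $G$ with eigenvalue $\mu$. Then domain monotonicity forces $\mu>\lambda_1(\Omega)$, which contradicts the hypothesis.

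First, $w$ is real-analytic in $\Omega$, being a solution of an elliptic equation with analytic coefficients on $\mathbb S^2$. By the local structure of nodal sets, $\{w=0\}$ is locally a finite union of analytic arcs meeting at isolated critical nodal points. Hence the boundary of $G$ is a piecewise analytic curve, and it lies in $\{w=0\}$ by the definition of a nodal domain. Since $\overline G\subset\Omega$, we have $w|_{\overline G}\in C^\infty(\overline G)$, $w\ne 0$ in $G$ (say $w>0$), and $w=0$ on $\partial G$. So $w|_G\in H^1_0(G)$ satisfies $-\Delta w=\mu w$ in $G$. Because $w$ has constant sign in $G$, it is a positive Dirichlet eigenfunction of $G$, and therefore $\mu=\lambda_1(G)$. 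This uses the standard fact that a positive Dirichlet eigenfunction must be the first one, by orthogonality to the first eigenfunction, which is also positive.

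Next, compare $G$ with $\Omega$. Because $\overline G$ is compact in $\Omega$, the set $\Omega\setminus\overline G$ is a nonempty open set. Extending $w$ by zero gives a function in $H^1_0(\Omega)$ with Rayleigh quotient $\mu$, so $\lambda_1(\Omega)\le\mu$. Strictness holds because, if equality held, the zero extension would be a first Dirichlet eigenfunction of $\Omega$. It would then be analytic in the connected set $\Omega$ and vanish on an open set, hence vanish identically, which is a contradiction. Therefore $\lambda_1(\Omega)<\lambda_1(G)=\mu\le\lambda_1(\Omega)$, which is impossible.

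The main obstacle is the regularity of the nodal loop. We need $w|_G$ to lie genuinely in $H^1_0(G)$, and we must handle the possibility that the "loop" passes through critical nodal points or that $G$ is not simply connected. I would handle this directly: the zero extension of $w\chi_G$ lies in $H^1_0(\Omega)$ simply because $w$ is $C^1$ and vanishes on $\partial G$, for instance by the standard truncation $(w-\epsilon)_+\chi_G$ with $\epsilon\to0$. This avoids any need for a smooth boundary of $G$. If the paper intends $\mu<\lambda_1(\Omega)$ rather than $\le$, the argument is shorter still, since domain monotonicity alone gives $\lambda_1(G)\ge\lambda_1(\Omega)>\mu$.
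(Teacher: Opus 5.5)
Your proof is correct and is essentially the paper's argument: $w$ restricted to the enclosed nodal domain is a positive Dirichlet eigenfunction there, so $\mu=\lambda_1(G)>\lambda_1(\Omega)\ge\mu$, a contradiction. The only difference is that you supply details the paper takes for granted: the $H^1_0(G)$ membership via truncation, and the strictness of domain monotonicity via the zero extension and unique continuation. Incidentally, that zero-extension argument already gives $\lambda_1(\Omega)<\mu$ on its own, so the identification $\mu=\lambda_1(G)$ is not needed.
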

	
	\begin{proof}
		Suppose that such a loop exists.  It encloses a nodal domain $\bar{D}\subset\Omega$ on which $w$ has one sign and $w=0$ on $\partial D$.  Thus $w|_D$ is the first Dirichlet eigenfunction on $D$, and
		\[
		\mu=\lambda_1(D).
		\]
		Since $D\subset\Omega$, by the monotonicity of the first Dirichlet eigenvalue with respect to the domain, we know that
		\[
		\lambda_1(D)>\lambda_1(\Omega).
		\]
		Hence
		\[
		\mu=\lambda_1(D)>\lambda_1(\Omega)\geq\mu,
		\]
		which is a contradiction.
	\end{proof}

	Applying the preceding nonexistence result for nodal loops to the second Neumann eigenfunction,
	we can rule out critical nodal points on its nodal set. 
	\begin{lemma}\label{lem:no-nodal-critical}
		Let $\Omega\subset\mathbb S^2$ be a convex domain contained in a hemisphere, and $u$ be a second Neumann eigenfunction with eigenvalue $\mu=\mu_2(\Omega)$.  Then $u$ has no interior point $p$ such that
		\[
		u(p)=0,
		\qquad
		\nabla u(p)=0.
		\]
	\end{lemma}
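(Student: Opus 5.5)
Suppose, for contradiction, that $p\in\Omega$ satisfies $u(p)=0$ and $\nabla u(p)=0$. We will show that $u$ then has at least three nodal domains, which contradicts Courant's nodal domain theorem for the second Neumann eigenfunction.

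\textbf{Local structure at $p$.} Since $u$ solves $\Delta u+\mu u=0$ with analytic coefficients, the local structure theorem for nodal sets (Bers / Cheng) applies. Near a zero of order $k\ge 2$, the nodal set $\{u=0\}$ consists of $2k\ge 4$ analytic arcs emanating from $p$ at equal angles. Hence a small neighborhood of $p$ is divided into at least four sectors. On these sectors $u$ alternates in sign, so at least two sectors are positive and two are negative.

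\textbf{Global structure.} Follow each of the $\ge 4$ nodal arcs leaving $p$. Inside $\Omega$ the nodal set is a locally finite graph, so each arc either
\begin{enumerate}
\item[(i)] returns to $p$ or meets another nodal arc, forming a closed loop inside $\Omega$, or
\item[(ii)] reaches $\partial\Omega$.
\end{enumerate}
We rule out case (i) using Lemma \ref{lem:no-loop}, which requires $\mu\le\lambda_1(\Omega)$. Here $\mu=\mu_2(\Omega)$, so we need $\mu_2(\Omega)\le\lambda_1(\Omega)$. For convex domains on $\mathbb S^2$ contained in a hemisphere, I would invoke the known inequality $\mu_2\le\lambda_1$. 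This is the Pólya / Szegő–Weinberger / Friedlander type comparison; the spherical version is available, for instance from Aithal–Santhanam or Ashbaugh–Benguria type results. It also follows from a Levine–Weinberger style argument using the test functions $\partial_\theta$ of the first Dirichlet eigenfunction, which works under nonnegative curvature together with convexity of $\Omega$ (hemisphere containment ensures the geodesic convexity that is needed). Such a loop would enclose a nodal domain compactly contained in $\Omega$, so Lemma \ref{lem:no-loop} forbids it. Therefore every arc from $p$ reaches $\partial\Omega$, and distinct arcs do not reconnect inside $\Omega$.

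\textbf{Counting nodal domains.} We now have at least four disjoint curves running from $p$ to $\partial\Omega$. Since $\Omega$ is simply connected (it is convex), these curves cut $\Omega$ into at least four regions. The sectors at $p$ belong to different regions, and adjacent regions carry opposite signs. Two positive sectors cannot belong to the same nodal domain: joining them inside $\{u>0\}$ would produce a closed curve through $p$ that separates one of the negative arcs from $\partial\Omega$, contradicting the previous step. So $u$ has at least three (in fact four) nodal domains. This contradicts Courant's theorem, which allows at most two nodal domains for an eigenfunction of $\mu_2$.

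\textbf{Main obstacle.} The key step is justifying $\mu_2(\Omega)\le\lambda_1(\Omega)$ on the sphere, which is needed to invoke Lemma \ref{lem:no-loop}. If no citation is at hand, I would first try a direct argument. Alternatively, one can apply Courant's bound directly to the loop scenario: a loop through $p$ also creates an extra nodal domain, which could give the contradiction without Lemma \ref{lem:no-loop}. The Lemma \ref{lem:no-loop} route, however, is the one the surrounding text indicates.
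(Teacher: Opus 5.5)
Your proposal is correct and follows essentially the same route as the paper. The steps match: the local structure theorem gives at least four alternating sectors at $p$; the inequality $\mu_2(\Omega)\le\lambda_1(\Omega)$ lets Lemma~\ref{lem:no-loop} exclude nodal loops; the arcs continue to $\partial\Omega$ in the simply connected $\Omega$; and counting nodal domains contradicts Courant's theorem. The only difference is the source for $\mu_2(\Omega)\le\lambda_1(\Omega)$: the paper cites Ashbaugh--Levine for convex spherical domains, not the references you name.

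Your closing remark also holds and gives a cleaner proof. By Courant, $u$ would have exactly the two nodal domains $\{u>0\}$ and $\{u<0\}$. Joining two positive sectors at $p$ inside $\{u>0\}$ gives a Jordan curve through $p$ that separates two negative sectors, and these could then not be joined inside $\{u<0\}$. So Courant's theorem and the Jordan curve theorem suffice on their own, without $\mu_2\le\lambda_1$, convexity, or Lemma~\ref{lem:no-loop}.
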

	\begin{proof}
		It is well known that $\mu=\mu_2(\Omega)\le \lambda_1(\Omega)$
		for convex domains in $\Sph$ (see \cite[Theorem 3.1]{AshbaughLevine1997}).
		Hence Lemma \ref{lem:no-loop} applies to $w=u$, and the nodal set of $u$ contains no closed
		loop enclosing a nodal domain compactly contained in $\Omega$.
		
		Suppose by the contradiction that there exists
		$p\in\Omega$ such that
		\[
		u(p)=0,
		\qquad
		\nabla u(p)=0.
		\]
		By the standard local structure theorem for nodal sets
		\cite{Cheng1976CMH,Hartman1953AJM}, at least four nodal arcs
		emanate from $p$, and the signs of $u$ alternate in the sectors
		between consecutive arcs.
		
		Since $\Omega$ is convex, it is simply connected. Furthermore, Lemma~\ref{lem:no-loop} rules out any closed components of the nodal set. Hence, continuing the nodal arcs emanating from $p$ through the nodal set forces them all to terminate at $\partial\Omega$. By a standard planar topological argument, the existence of at least four such branches originating from an interior point divides $\Omega$ into no fewer than four nodal domains. However, this contradicts Courant's nodal domain theorem, which states that a second Neumann eigenfunction can have at most two nodal domains. Consequently, no such interior critical nodal point can exist.
	\end{proof}

	\begin{proposition}\label{prop:radial-criterion}
		Let $\Omega\subset\mathbb S^2$ be a convex domain contained in a hemisphere, and $u$ be the second Neumann eigenfunction with eigenvalue $\mu=\mu_2(\Omega)$.  Suppose that $p\in\Omega$ satisfies
		\begin{equation}\label{eq:local-monotone-assumption}
			j_\mu'(r)<0
			\qquad\text{for every }0<r\leq R_p,
			\qquad
			R_p:=\max_{q\in\partial\Omega}d_{\mathbb S^2}(p,q).
		\end{equation}
		Then $p$ is not a critical point of $u$.
	\end{proposition}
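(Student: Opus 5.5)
We argue by contradiction, following Miyamoto's scheme as outlined in step (c) of the sketch. Suppose $\nabla u(p)=0$. By Lemma~\ref{lem:no-nodal-critical}, $u(p)\neq 0$; replacing $u$ by $-u$ if needed, assume $u(p)>0$. Here $j_\mu$ is the regular radial solution of $-\Delta\Phi=\mu\Phi$ on $\Sph$, normalized by $j_\mu(0)=1$ and $j_\mu'(0)=0$. Set
\[
w(q)=u(p)\,j_\mu(d_{\Sph}(p,q))-u(q).
\]
Since $\Omega$ lies in a hemisphere, $d(p,\cdot)<\pi$ on $\overline\Omega$, so $w$ is smooth there. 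It satisfies $-\Delta w=\mu w$, with $w(p)=0$ and $\nabla w(p)=0$.

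We first show $w\not\equiv 0$. Otherwise $u=u(p)j_\mu(d(p,\cdot))$ would be radial about $p$. The Neumann condition would then give $\partial_n u=u(p)j_\mu'(d)\,\partial_n d(p,\cdot)$ on $\partial\Omega$. By convexity, $\partial_n d(p,\cdot)>0$ a.e.\ on $\partial\Omega$ (the geodesic from $p$ exits transversally or tangentially, and on a set of positive measure it exits transversally). By \eqref{eq:local-monotone-assumption}, $j_\mu'(d)<0$ for $d\le R_p$. Hence $\partial_n u<0$ on a set of positive measure, a contradiction.

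Next, $p$ is a critical zero of $w$, so by the local structure theorem \cite{Cheng1976CMH,Hartman1953AJM} at least four nodal arcs of $w$ leave $p$, with alternating signs in the sectors between them. In particular, at least two sectors are positivity sectors, lying in distinct components $\Omega_1,\Omega_2$ of $\{w>0\}$ near $p$. We use Lemma~\ref{lem:no-loop} with $\mu=\mu_2(\Omega)\le\lambda_1(\Omega)$, as in Lemma~\ref{lem:no-nodal-critical}, together with simple connectivity of $\Omega$. These rule out nodal domains compactly contained in $\Omega$, so every nodal domain touches $\partial\Omega$. The same planar topology argument then shows that the positive components through the two positive sectors are distinct nodal domains $\Omega_1\ne\Omega_2$, each meeting $\partial\Omega$ in a set of positive length. (If one positive sector's component wrapped around to meet the other, the arcs between them would bound a region whose nodal domains are enclosed, contradicting Lemma~\ref{lem:no-loop}.)

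The contradiction with the variational characterization of $\mu_2$ is the main step. On each $\Omega_i$, take $\varphi_i=w\chi_{\Omega_i}\in H^1(\Omega)$. Integrating by parts, and using $w=0$ on $\partial\Omega_i\cap\Omega$, gives
\[
\int_{\Omega_i}|\nabla w|^2-\mu\int_{\Omega_i}w^2=\int_{\partial\Omega_i\cap\partial\Omega}w\,\partial_n w .
\]
On $\partial\Omega$ we have $\partial_n w=u(p)j_\mu'(d)\,\partial_n d\le 0$, because $j_\mu'<0$ on $(0,R_p]$, $u(p)>0$ and $\partial_n d\ge0$ by convexity. Since $w>0$ on $\Omega_i$, each boundary integral is $\le 0$. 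It is in fact strictly negative, since $\partial_n d>0$ on a positive-measure portion of each $\partial\Omega_i\cap\partial\Omega$. I expect this strictness to be the main obstacle: it needs a careful argument that the contact set is not contained in the tangential-exit set, which holds because $\partial_n d=0$ only where the boundary is radial relative to $p$. So each $\varphi_i$ has Rayleigh quotient $<\mu$.

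The functions $\varphi_1,\varphi_2$ have disjoint supports, so some combination $a\varphi_1+b\varphi_2$ is nonzero, orthogonal to constants, and has Rayleigh quotient $<\mu=\mu_2(\Omega)$. This contradicts the min–max characterization. Hence $p$ is not a critical point of $u$.
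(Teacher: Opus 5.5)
\textbf{Gap: the strict inequality in your final step.}

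Your argument matches the paper's through the following steps: the construction of $w$, the proof that $w\not\equiv0$, the nodal-structure step giving two positive nodal domains $\Omega_1,\Omega_2$, and the integration by parts
$\int_{\Omega_i}|\nabla w|^2-\mu\int_{\Omega_i}w^2=\int_{\partial\Omega_i\cap\partial\Omega}w\,\partial_n w\le 0$.

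The gap is where you upgrade this to a strict inequality. Your justification concerns only the factor $\partial_n d$, but the integrand is $w\,\partial_n w$. Nothing you have shown prevents the trace of $w$ from vanishing a.e.\ on $\partial\Omega_i\cap\partial\Omega$. For instance, $\Omega_i$ could touch $\partial\Omega$ only at isolated points, or only along boundary arcs on which $w=0$.

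Your assertion that each $\Omega_i$ meets $\partial\Omega$ in a set of positive length is also unproved. Even if it were true, it would not give $w>0$ on a set of positive measure there.

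Incidentally, $\partial_n d$ is not the problem. Since $p$ is interior and $\Omega$ is convex, $\partial_n d(p,\cdot)>0$ at every regular boundary point, because the supporting great circle at $q$ strictly separates $p$ from it. Ruling out $w\equiv0$ on $\partial\Omega_i\cap\partial\Omega$ would need a further Dirichlet-eigenvalue or unique continuation argument, which you do not supply.

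\textbf{How the paper closes the argument without strictness.}

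Keep only $\int|\nabla\varphi_i|^2\le\mu\int\varphi_i^2$. Then the combination $\phi=a\varphi_1+b\varphi_2$ orthogonal to constants has Rayleigh quotient at most $\mu=\mu_2(\Omega)$. By the min--max characterization, equality must hold, so $\phi$ is a second Neumann eigenfunction.

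Since $\int\varphi_i>0$, both $a$ and $b$ are nonzero, so $\phi\not\equiv0$. But $\phi$ vanishes on the nonempty open set $\{w<0\}$, which contains the negative sectors at $p$. This contradicts unique continuation. Replacing your last step with this one makes your proof complete and identical to the paper's.
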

	
	\begin{proof}
		We set up the usual contradiction argument. Assume that $p$ is a critical point of $u$. By Lemma \ref{lem:no-nodal-critical}, $u(p)\neq0$.  Replacing $u$ by $-u$ if necessary, assume $u(p)>0$.
		
		Define
		\begin{align}
			w(q)=u(p)J_{p,\mu}(q)-u(q)=u(p)j_\mu(d_{\mathbb S^2}(p,q))-u(q).
		\end{align}
		Then
		\[ -\Delta w=\mu w\quad\text{in }\Omega. \]
		Moreover,
		\[  w(p)=0,\qquad\nabla w(p)=0, \]
		because $J_{p,\mu}(p)=1$, $\nabla J_{p,\mu}(p)=0$ and $\nabla u(p)=0$.
		We first note that $w\not\equiv0$.
		Indeed, if $w\equiv0$, then $u(q)=u(p)j_\mu(r_p(q))$.
		Hence, by the Neumann boundary condition,
		\begin{align}
			0=\partial_n u =u(p)j_\mu'(r_p)\partial_n r_p\quad\text{a.e. on }\partial\Omega.
		\end{align}
		Since $u(p)\neq0$ and the assumption of \eqref{eq:local-monotone-assumption}, it follows that
		$\partial_n r_p=0$ a.e. on $\partial\Omega$, which contradicts the convexity of $\Omega$. 
		
		By the standard local structure theorem for nodal sets, the nodal set of $w$ has at least
		four nodal arcs meeting at $p$.
		By Lemma \ref{lem:no-loop}, the nodal set of $w$ has no loop.
		Therefore there are at least two positive nodal domains of $w$, denoted by $\Omega_1$ and $\Omega_2$.

		At a point $q\in\partial\Omega$, the Neumann condition for $u$ gives
		\[
		\partial_n w(q)=u(p)\,j_\mu'(r_p(q))\,\partial_n r_p(q).
		\]
		By convexity, we have $\partial_n r_p(q)\geq0$ a.e. on $\partial\Omega$, together with assumption \eqref{eq:local-monotone-assumption}, it follows that
		\begin{equation}\label{eq:normal-sign}
			\partial_n w\leq0\quad\text{ a.e. on }\partial\Omega.
		\end{equation}
		
		Let $\phi_i=w|_{\Omega_i}$, extended by zero to all of $\Omega$, for $i=1,2$.  Since $w>0$ on each $\Omega_i$, integration by parts and \eqref{eq:normal-sign} give
		\begin{equation}\label{eq:rayleigh-components}
			\int_\Omega |\nabla\phi_i|^2
			=\mu\int_\Omega \phi_i^2
			+\int_{\partial\Omega_i\cap\partial\Omega} w\partial_n w 
			\leq \mu\int_\Omega \phi_i^2.
		\end{equation}
		The functions $\phi_1$ and $\phi_2$ have disjoint supports.  Choose constants $\alpha,\beta$, not both zero, such that
		\begin{align}\label{eq:orth-constant}
			\int_\Omega (\alpha\phi_1+\beta\phi_2)=0,
		\end{align}
		and set
		\[
		\phi=\alpha\phi_1+\beta\phi_2.
		\]
		Using \eqref{eq:rayleigh-components} and the disjointness of supports, we obtain
		\[
		\int_\Omega |\nabla\phi|^2
		\leq
		\mu\int_\Omega \phi^2.
		\]
		Since \eqref{eq:orth-constant} implies that $\phi$ is orthogonal to constants,
		the Rayleigh quotient characterization of $\mu=\mu_2(\Omega)$ gives the reverse inequality.
		Hence the equality must hold and $\phi$ must be a second Neumann eigenfunction.
		But $\phi$ vanishes on a nonempty open subset of $\Omega$,
		contradicting the unique continuation theorem. Therefore $p$ cannot be a critical point of $u$.
	\end{proof}

	We now introduce the radial comparison function and its initial interval of monotonicity. For any $\mu>0$, let $j_\mu$ be the unique smooth solution on $(0,\pi)$ of
	\begin{equation}\label{eq:spherical-radial-ode}
		j_\mu''(r)+\cot r\,j_\mu'(r)+\mu j_\mu(r)=0,
		\qquad
		j_\mu(0)=1,
		\qquad
		j_\mu'(0)=0.
	\end{equation}
	For a fixed point $p\in\mathbb S^2$, we set
	\[J_{p,\mu}(q)=j_\mu\bigl(d_{\mathbb S^2}(p,q)\bigr),\]
	where $d_{\mathbb S^2}(p,q)$ denotes the geodesic distance between the points $p$ and $q$ on $\Sph$. On $\mathbb S^2\backslash\{-p\}$, this function satisfies
	\[
	-\Delta J_{p,\mu}=\mu J_{p,\mu},
	\qquad
	J_{p,\mu}(p)=1,
	\qquad
	\nabla J_{p,\mu}(p)=0.
	\]
	By the definition of Ferrers functions, this radial function can also be written as $J_{p,\mu}=\mathsf{P}_\nu(\cos r)$,
	where the positive constant $\nu$ satisfies $\nu(\nu+1)=\mu$.

	For $\mu>0$, we define the monotonicity radius of the function $j_\mu$ by
	\[
	\tau_\mu
	:=\sup\{\rho\in(0,\pi]: j_\mu'(r)<0\text{ for every }0<r<\rho\}.
	\]
	Equivalently, if $j_\mu'$ has a first zero point in $(0,\pi)$, then $\tau_\mu$ is this zero point; otherwise $\tau_\mu=\pi$.
	On the other hand, if $\tau_\mu<\pi$, by using Ferrers functions, since 
	$j_\mu'(r) = -\sin(r) \mathsf{P}_\nu'(\cos r)$, 
	the quantity $\cos(\tau_\mu)$ is precisely the root of $\mathsf{P}_\nu'(r)$ in $(-1,1)$ that is the first root closest  to $1$. For those $\mu$ with $\nu\in \mathbb{N}$, $\mathsf{P}_\nu$ has an explicit expression, and thus specific values can also be given. 
	For example, when $\mu = 6$ and $\nu = 2$, we have 
	$\mathsf P_2 = \frac{1}{2}(3x^2 - 1)$, and in this case $\tau_6 = \pi/2$. When $\mu = 12$ and $\nu = 3$, we have 
	$\mathsf P_3 = \frac{1}{2}(5x^3 - 3x)$, and in this case 
	$\tau_{12} = \arccos \frac{\sqrt{5}}{5}$.

	Let $\Omega\subset\mathbb S^2$ be a convex domain contained in a hemisphere, and let $\mu>0$.  For a fixed point $p\in\Omega$, we set
	\[
	R_p=\max_{q\in\partial\Omega}d_{\mathbb S^2}(p,q),
	\]
	and define
	\[
	\calC_\mu(\Omega)
	:=\{p\in\Omega:R_p<\tau_\mu\},
	\qquad
	\calA_\mu(\Omega)
	:=\{p\in\Omega:R_p\geq\tau_\mu\}.
	\]
	The following theorem shows that interior critical points must be in the region $\calA_\mu(\Omega)$.
	\begin{theorem}\label{thm:location}
		Let $\Omega\subset\mathbb S^2$ be a convex domain contained in a hemisphere, and $u$ be the second Neumann eigenfunction with eigenvalue
		$\mu=\mu_2(\Omega)$.
		Then
		\[
		\Crit(u)\cap\Omega\subset\calA_\mu(\Omega).
		\]
		Equivalently,
		\[
		\nabla u(p)\neq0
		\qquad\text{for every }p\in\calC_\mu(\Omega).
		\]
	\end{theorem}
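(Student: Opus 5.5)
The theorem follows almost directly from Proposition \ref{prop:radial-criterion}; the plan is to check that membership in $\calC_\mu(\Omega)$ gives exactly its hypothesis \eqref{eq:local-monotone-assumption}.

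Fix $p\in\calC_\mu(\Omega)$, so that $R_p<\tau_\mu$.

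\emph{Step 1: sign of $j_\mu'$ up to $R_p$.} By the definition of $\tau_\mu$ as a supremum, $j_\mu'(r)<0$ for every $0<r<\tau_\mu$. Since $R_p<\tau_\mu$, the whole interval $(0,R_p]$ lies in $(0,\tau_\mu)$. Hence $j_\mu'(r)<0$ for all $0<r\le R_p$, which is precisely \eqref{eq:local-monotone-assumption}.

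\emph{Step 2: $R_p$ stays in the range where $j_\mu$ is defined.} Because $\Omega$ lies in a hemisphere, every boundary point is at distance less than $\pi$ from $p$. Thus $R_p<\pi$, and $J_{p,\mu}$ is smooth on $\overline\Omega$ (the antipode $-p$ is not in $\overline\Omega$). This is the setting in which the comparison function $w$ of Proposition \ref{prop:radial-criterion} was built. Note also that $R_p>0$, since $p$ is an interior point.

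\emph{Step 3: apply the proposition.} Proposition \ref{prop:radial-criterion} now gives $\nabla u(p)\neq0$. As $p\in\calC_\mu(\Omega)$ was arbitrary, $\Crit(u)\cap\Omega\subset\Omega\setminus\calC_\mu(\Omega)=\calA_\mu(\Omega)$. This is the inclusion claimed in the theorem, and it is equivalent to the second formulation.

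\emph{Expected obstacle.} The only point needing care is the boundary case where $\tau_\mu=\pi$, i.e. $j_\mu'$ has no zero in $(0,\pi)$. The strict inequality $R_p<\tau_\mu$ is still guaranteed by Step 2, since $R_p<\pi$. All the substantive work, namely the nodal-structure argument for $w$ and the variational contradiction, is already contained in Proposition \ref{prop:radial-criterion}.
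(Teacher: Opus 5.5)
Your proposal is correct and follows the paper's argument: $p\in\calC_\mu(\Omega)$ means $R_p<\tau_\mu$, which gives $j_\mu'<0$ on $(0,R_p]$, i.e.\ hypothesis \eqref{eq:local-monotone-assumption}, and then Proposition \ref{prop:radial-criterion} rules out $\nabla u(p)=0$. Your Step 2 and the remark on the case $\tau_\mu=\pi$ are harmless but unnecessary, since $R_p<\tau_\mu$ is the defining condition of $\calC_\mu(\Omega)$ rather than something that needs checking.
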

	
	\begin{proof}
		Let $p\in\calC_\mu(\Omega)$.  By the definition of $\calC_\mu(\Omega)$, the local monotonicity condition \eqref{eq:local-monotone-assumption} holds at $p$.  Then Proposition \ref{prop:radial-criterion} implies that $p$ cannot be a critical point of $u$.  Hence all interior critical points lie in $\calA_\mu(\Omega)$.
	\end{proof}

	\begin{remark}\label{rem:spherical-low-mu-recovery}
		If $\mu\leq2$, then $\tau_\mu=\pi$. Indeed, set $g(r)=-j_\mu'(r)$,
		then \eqref{eq:spherical-radial-ode} is equivalent to
		\begin{equation}\label{eq:sturm}
			-\bigl(\sin r  g'(r)\bigr)'
			+\frac1{\sin r}g(r)
			=\mu\sin r g(r).
		\end{equation}
		Moreover, substituting $\cot r=\frac1r-\frac{r}{3}+O(r^3)$ near $r=0$
		into \eqref{eq:spherical-radial-ode}$,$ we obtain $j_\mu(r)=1-\frac{\mu}{4}r^2+O(r^4)$,
		and hence $g(r)=\frac{\mu}{2}r+O(r^3)>0$ for sufficiently small $r$.
		If $g$ had a first zero $\rho\in(0,\pi)$, then $g$ would be
		the positive first Dirichlet eigenfunction of
		the Sturm--Liouville problem \eqref{eq:sturm} on $(0,\rho)$ with eigenvalue $\mu$.
		On the other hand, $v(r)=\sin r$ is the positive first
		Dirichlet eigenfunction of the same problem on $(0,\pi)$
		with eigenvalue $2$. The strict domain monotonicity of the
		first Dirichlet eigenvalue therefore gives
		\[
		\mu > 2,
		\]
		a contradiction. Hence $j_\mu'(r)<0$ for every $0<r<\pi$,
		and consequently $\tau_\mu=\pi$.
		Since $R_p<\pi$ for every $p\in\Omega$, we have
		$\calA_\mu(\Omega)=\varnothing$. Thus $u$ has no interior critical
		point. 
	\end{remark}

	\begin{remark}\label{rem:spherical-lune-example}
		For every smooth convex domain $\Omega\subset\mathbb S^2$,
		the Lichnerowicz-type Neumann eigenvalue estimate gives $\mu_2(\Omega)\geq2$,
		with equality only for a hemisphere \cite{Escobar1990,Xia1991}.
		Consequently, the preceding remark has no genuinely subcritical application in the smooth category.
		Nevertheless, the endpoint $\mu_2=2$ occurs naturally for nonsmooth convex domains.
		For $0<\alpha<\pi$, let $L_\alpha$ be the spherical lune bounded by two meridians meeting at angle $\alpha$,
		with the north and south poles as its two vertices. In colatitude-longitude coordinates, it is given by
		\[
		L_\alpha=\{(\theta,\varphi):0<\theta<\pi,\ 0<\varphi<\alpha\}.
		\]
		Separation of variables with the Neumann condition on the meridians gives $\mu_2(L_\alpha)=2$,
		with eigenfunction $u(\theta,\varphi)=\cos\theta$. 
		This eigenfunction has no critical point in the interior of
		$L_\alpha$, and its only critical points on
		$\overline{L_\alpha}$ are the two vertices, which belong to
		$\partial L_\alpha$.
		On the other hand,
		by Remark \ref{rem:spherical-low-mu-recovery},
		$\tau_2=\pi$, and for every $p\in L_\alpha$ one has $R_p<\pi$.
		Therefore Theorem \ref{thm:location} implies that $\calA_2(L_\alpha)=\varnothing$,
		and $u$ has no interior critical point. Thus Theorem \ref{thm:location} applies to every such domain
		$L_\alpha$.
		Notice also that $\diam(L_\alpha)=\pi$ and
		\[
		\mu_2(L_\alpha)\diam(L_\alpha)^2=2\pi^2>j_{1,1}^2,
		\]
		so this example is not covered by the diameter criterion in Theorem \ref{thm:intro-critical-free-space-forms}.
	\end{remark}

	We give the following quantitative lower bound for $\tau_\mu$ involving the diameter of the domain.

	\begin{lemma}
		\label{lem:tau-lower-bound}
		If $\tau_\mu < \pi$, we have
		\begin{align} \label{eq.3a}
			\tau_\mu
			\ge
			\frac{j_{1,1}}{\sqrt{\mu}},
		\end{align}
		where \(j_{1,1}\) denotes the first positive root of the Bessel function \(J_1\).
	\end{lemma}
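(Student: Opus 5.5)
The plan is to follow the Sturm--Liouville reformulation of Remark~\ref{rem:spherical-low-mu-recovery}. Set $g=-j_\mu'$, which satisfies \eqref{eq:sturm}:
\[
-(\sin r\, g')'+\frac{g}{\sin r}=\mu \sin r\, g .
\]
If $\tau_\mu<\pi$, then $g>0$ on $(0,\tau_\mu)$ and $g(\tau_\mu)=0$. Hence $g$ is the positive first Dirichlet eigenfunction of \eqref{eq:sturm} on $(0,\tau_\mu)$, and
\[
\mu=\Lambda(\tau_\mu):=\inf_{\varphi}\frac{\int_0^{\tau_\mu}\bigl(\sin r\,\varphi'^2+\varphi^2/\sin r\bigr)\,dr}{\int_0^{\tau_\mu}\sin r\,\varphi^2\,dr},
\]
with the infimum over $\varphi$ vanishing at both endpoints. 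It therefore suffices to show $\Lambda(\rho)\ge j_{1,1}^2/\rho^2$ for every $\rho\in(0,\pi)$. Then $\mu\ge j_{1,1}^2/\tau_\mu^2$, which is \eqref{eq.3a}.

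The comparison is with the flat problem. In Euclidean polar coordinates the analogous operator $-(r\varphi')'+\varphi/r=\lambda r\varphi$ on $(0,\rho)$ has first Dirichlet eigenvalue $j_{1,1}^2/\rho^2$, with eigenfunction $J_1(j_{1,1}r/\rho)$. This is the radial part of the $e^{i\theta}$ mode of the Dirichlet disk problem.

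I would compare the Rayleigh quotients via $\sin r\le r$, which means the sphere has positive curvature. The cleanest route is geometric rather than one-dimensional. The function $G(r,\theta)=g(r)\cos\theta$ on the geodesic disk $B_{\tau_\mu}(p)\subset\mathbb S^2$ satisfies $-\Delta G=\mu G$ with Dirichlet data, since $g/\sin r$ is exactly the angular term. We do not need $G$ to be a first eigenfunction. Its Rayleigh quotient equals $\mu$, and $G$ is odd in $\theta$, so it is orthogonal to every radial function. The idea is then to transplant $G$ to the flat disk of radius $\tau_\mu$ by the exponential map, with $\varphi(r)=g(r)$ fixed.

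The flat Rayleigh quotient of $g$ is
\[
\frac{\int (r g'^2+g^2/r)\,dr}{\int r g^2\,dr}.
\]
It must be compared with
\[
\frac{\int(\sin r\, g'^2+g^2/\sin r)\,dr}{\int \sin r\, g^2\,dr}=\mu .
\]
The kinetic term has weight $r\ge \sin r$ in the flat version, which goes the wrong way. The potential term has weight $1/r\le 1/\sin r$, and the mass weight $r\ge\sin r$ goes the right way. So a direct pointwise comparison is not immediate, and this is the main obstacle.

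To get around it, I would change variables rather than compare weights. Substitute $g(r)=h(r)\sqrt{r/\sin r}$, or more simply use a Liouville transform writing both problems in the form $-(\psi)''+V\psi=\lambda\psi$ with $\psi=\sqrt{\sin r}\,g$. This gives
\[
V_{\mathbb S^2}(r)=\frac{3}{4\sin^2 r}-\frac14
\]
in place of the flat potential
\[
V_{\mathbb R^2}(r)=\frac{3}{4r^2}.
\]
The key inequality is then the elementary bound
\[
\frac{1}{\sin^2 r}-\frac{1}{r^2}\ge\frac13\qquad\text{on }(0,\pi),
\]
which follows from the Taylor series $\frac1{\sin^2 r}=\frac1{r^2}+\frac13+\frac{r^2}{15}+\cdots$ with positive coefficients. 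It gives
\[
V_{\mathbb S^2}\ge V_{\mathbb R^2}+\tfrac34\cdot\tfrac13-\tfrac14=V_{\mathbb R^2}.
\]
Both problems now share the same weight, Lebesgue measure on $(0,\rho)$, with Dirichlet conditions; the endpoint $0$ is of limit-point type with behaviour $r^{3/2}$ in both. Hence the min--max principle yields $\mu=\Lambda(\tau_\mu)\ge j_{1,1}^2/\tau_\mu^2$.

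The remaining checks are:
\begin{enumerate}
\item that the Liouville transform is legitimate near $r=0$, where $\psi\sim r^{3/2}$ so no boundary terms arise;
\item the potential computation, namely that for weight $p=w=\sin r$ the transformed potential is $\frac{1}{\sin^2 r}+\frac{(\sqrt{\sin r})''}{\sqrt{\sin r}}=\frac1{\sin^2r}-\frac14-\frac{\cos^2 r}{4\sin^2 r}$, which simplifies to the formula above;
\item the positivity of the Taylor coefficients of $1/\sin^2 r$, or equivalently $\sin r\le r\big/\sqrt{1+r^2/3}$. This inequality is the Cusa--Huygens-type inequality mentioned in the introduction and is the step I would verify most carefully.
\end{enumerate}
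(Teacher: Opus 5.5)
Your final argument is correct and is essentially the paper's proof. The paper uses the same substitution $y=\sqrt{\sin r}\,v$ to obtain the potential $\frac{2+\cos^2 r}{4\sin^2 r}=\frac{3}{4\sin^2 r}-\frac14$, bounds it below by $\frac{3}{4r^2}$ via $\frac{1}{\sin^2 r}-\frac{1}{r^2}\ge\frac13$, and reads off $\mu\ge j_{1,1}^2/\tau_\mu^2$ from the flat ground state $\sqrt r\,J_1(j_{1,1}r/\rho)$; your transplantation detour is unnecessary. One small slip in your check (2): $(\sqrt{\sin r})''/\sqrt{\sin r}=-\frac12-\frac{\cos^2 r}{4\sin^2 r}$, not $-\frac14-\frac{\cos^2 r}{4\sin^2 r}$, and only with the correct value does the expression simplify to the $\frac{3}{4\sin^2 r}-\frac14$ you state.
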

	\begin{proof}
		Set $g(r)=-j_\mu'(r)$.
		Obviously $g(r)>0$ for all sufficiently small $r>0$.
		Substituting $g$ into \eqref{eq:spherical-radial-ode} and rearranging gives
		\begin{align}\label{eq:Sph-sturm-liouville}
			-\bigl(\sin r\, g'(r)\bigr)'+\frac{1}{\sin r}g(r)=\mu\sin r\, g(r).
		\end{align}
		By the definition of $\tau_\mu$, we have
		\begin{align}
			g(r)>0 \quad \text{for }0<r<\tau_\mu,\qquad g(\tau_\mu)=0.
		\end{align}
		Set $\rho=\tau_\mu$. Since $r=0$ is a singular endpoint, we consider the Friedrichs form associated with the Sturm--Liouville equation \eqref{eq:Sph-sturm-liouville}. Let
		\begin{align}
			\mathcal H_\rho^{\mathbb S}=
			\left\{v\in H^1_{\mathrm{loc}}(0,\rho)\,\big|\, v(\rho)=0,\
			\int_0^\rho\left(\sin r\, |v'|^2+\frac{|v|^2}{\sin r}\right)dr<\infty\right\}.
		\end{align}
		Since $g(r)=\dfrac{\mu r}{2}+O(r^3)$ as $r\to 0^+$, we have $g\in\mathcal H_\rho^{\mathbb S}$, which is positive on $(0,\rho)$, vanishes at $\rho$, and realizes the lowest eigenvalue of this singular Sturm--Liouville problem.
		Hence
		\begin{align}
			\mu=
			\inf_{0\ne v\in\mathcal H_\rho^{\mathbb S}}
			\frac{\displaystyle \int_0^\rho \left( \sin r |v'|^2+\frac{v^2}{\sin r}\right) dr}
			{\displaystyle \int_0^\rho \sin r v^2 dr}.
		\end{align}
		
		For $v\in C_0^\infty(0,\rho)$, set $y(r)=(\sin r)^{1/2}v(r)$.
		A direct computation yields
		\begin{align}
			\sin r |v'|^2+\frac{v^2}{\sin r}=|y'|^2-\frac{\cos r}{\sin r}yy'+\frac{4+\cos^2 r}{4\sin^2 r}y^2.
		\end{align}
		Since $y\in C_0^\infty(0,\rho)$, integration by parts gives
		\begin{align}
			\int_0^\rho -\frac{\cos r}{\sin r}yy' dr
			=-\frac12 \int_0^\rho\frac{y^2}{\sin^2 r} dr.
		\end{align}
		Thus
		\begin{align}
			\frac{\displaystyle \int_0^\rho\left(\sin r|v'|^2+\frac{v^2}{\sin r}\right)dr}
			{\displaystyle \int_0^\rho\sin r v^2 dr}
			=\frac{\displaystyle
				\int_0^\rho\left(|y'|^2+\frac{2+\cos^2 r}{4\sin^2 r}y^2\right)dr}
			{\displaystyle \int_0^\rho y^2 dr}.
		\end{align}
		
		The following elementary inequality holds:
		\begin{align}
			\frac{2+\cos^2 r}{4\sin^2 r}\ge\frac{3}{4r^2},\quad 0<r<\pi.
		\end{align}
		Consequently,
		\begin{align}
			\mu\ge
			\inf_{0\ne y\in C_0^\infty(0,\rho)}
			\frac{\displaystyle \int_0^\rho\left(|y'|^2+\frac{3}{4r^2}y^2\right)dr}
			{\displaystyle\int_0^\rho y^2 dr}.
		\end{align}
		
		The infimum on the right‑hand side is the lowest eigenvalue of the Friedrichs realization of
		\[
		-\frac{d^2}{dr^2}+\frac{3}{4r^2}
		\]
		on $(0,\rho)$ with Dirichlet boundary condition at $r=\rho$. Its positive first eigenfunction is
		$y(r)=\sqrt{r} J_1\left(\frac{j_{1,1}}{\rho}r\right)$ with eigenvalue $j_{1,1}^2/\rho^2$.
		Therefore, $\mu\ge j_{1,1}^2/\rho^2$, which implies
		\[
		\tau_\mu\ge \frac{j_{1,1}}{\sqrt{\mu}}.
		\]
		This proves the lemma.
	\end{proof}

	\begin{remark}
		Our estimate \eqref{eq.3a} is asymptotically sharp for large $\mu$, since for sufficiently large $\nu$, we have the asymptotic relation (\cite[(14.15.11)]{DLMF})
		\begin{align}
			\mathsf{P}_\nu(\cos r) \sim \sqrt{\frac{r}{\sin r}}J_0((\nu + \frac12)r)
			\sim J_0((\nu + \frac12)r) \text{  for sufficiently small }r.
		\end{align}
		Therefore 
		$$-(\sin r)\mathsf{P}_\nu'(\cos r) \sim -(\nu + \frac12) J_1((\nu + \frac12) r).$$ 
		Consequently, 
		$$j_\mu'(r) \sim -(\nu + \frac12) J_1((\nu + \frac12) r),$$ 
		so, the first root satisfies 
		$$\tau_\mu \sim \frac{j_{1,1}}{\nu + \frac12} \sim \frac{j_{1,1}}{\sqrt{\mu}}.$$
		For example, $\tau_{12}=\arccos \frac{\sqrt{5}}{5}\approx 1.1071$
		and $\frac{j_{1,1}}{\sqrt{\mu}}\approx 1.1061$.
	\end{remark}
	
	\begin{theorem}\label{thm:sphere-mu-D}
		Let $\Omega\subset\mathbb S^2$ be a convex domain contained in a hemisphere, and
		\begin{align}
			D=\diam_{\mathbb S^2}(\Omega), \quad \mu=\mu_2(\Omega).
		\end{align}
		If $\mu D^2 \le j_{1,1}^2$, then every second Neumann eigenfunction has no critical points in $\Omega$.
	\end{theorem}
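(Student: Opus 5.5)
The plan is to reduce the statement to Theorem~\ref{thm:location}: we show that under $\mu D^2\le j_{1,1}^2$ the set $\calA_\mu(\Omega)$ is empty, i.e.\ $R_p<\tau_\mu$ for every $p\in\Omega$. There are two cases, according to whether $\tau_\mu=\pi$ or $\tau_\mu<\pi$.

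\emph{Step 1: the case $\tau_\mu=\pi$.} Since $\Omega$ is contained in a hemisphere and $p\in\Omega$ is an interior point, every $q\in\overline\Omega$ satisfies $d_{\mathbb S^2}(p,q)<\pi$. Compactness of $\partial\Omega$ then gives $R_p<\pi=\tau_\mu$. Hence $p\in\calC_\mu(\Omega)$, exactly as in Remark~\ref{rem:spherical-low-mu-recovery}.

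\emph{Step 2: the case $\tau_\mu<\pi$.} Lemma~\ref{lem:tau-lower-bound} gives $\tau_\mu\ge j_{1,1}/\sqrt\mu$, and the hypothesis $\mu D^2\le j_{1,1}^2$ gives $j_{1,1}/\sqrt\mu\ge D$. So it suffices to prove the strict inequality $R_p<D$ for every interior point $p$. Trivially $R_p\le D$, because $p,q\in\overline\Omega$.

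For the strict inequality, suppose $R_p=D$, attained at $q\in\partial\Omega$. Since $\Omega$ is open, a small geodesic ball around $p$ lies in $\Omega$. Extend the minimizing geodesic from $q$ through $p$ slightly beyond $p$ to a point $p'\in\Omega$. Because $D<\pi$, this extended segment is still minimizing, so
\[
d(q,p')=d(q,p)+d(p,p')>D .
\]
This contradicts $D=\diam(\Omega)$. The condition $D<\pi$ is needed here: near antipodal distances the extended geodesic could fail to minimize. It holds because $D\le j_{1,1}/\sqrt{\mu}$, and Remark~\ref{rem:spherical-lune-example} gives $\mu\ge 2$ in the relevant setting, so $D\le j_{1,1}/\sqrt2\approx2.71<\pi$. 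More simply, if $D=\pi$ then $\mu\le j_{1,1}^2/\pi^2<2$, hence $\tau_\mu=\pi$ and we are back in Step~1.

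\emph{Step 3: conclusion.} In both cases $R_p<\tau_\mu$ for all $p\in\Omega$, so $\calC_\mu(\Omega)=\Omega$ and $\calA_\mu(\Omega)=\varnothing$. Theorem~\ref{thm:location} then shows that no second Neumann eigenfunction has an interior critical point.

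The main obstacle is the strictness $R_p<D$, i.e.\ the geodesic-extension argument near $D=\pi$. I handle it with the case split above: whenever the extension could fail, $\mu$ is small enough that $\tau_\mu=\pi$.
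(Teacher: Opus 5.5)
Your proposal is correct and follows the paper's proof. You split on $\tau_\mu=\pi$ versus $\tau_\mu<\pi$, use Lemma~\ref{lem:tau-lower-bound} to get $\tau_\mu\ge j_{1,1}/\sqrt{\mu}\ge D>R_p$, and conclude via Theorem~\ref{thm:location}. The one addition is that you justify the strict inequality $R_p<D$, which the paper only asserts. For the needed $D<\pi$, your fallback (if $D=\pi$ then $\mu<2$, so $\tau_\mu=\pi$ by Remark~\ref{rem:spherical-low-mu-recovery}) is the argument to rely on, since Remark~\ref{rem:spherical-lune-example} is stated only for smooth domains.
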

	\begin{proof}
		If $\tau_\mu=\pi$, then every second Neumann eigenfunction has no 
		critical points in $\Omega$. If $\tau_\mu<\pi$, 
		by assumption and  Lemma \ref{lem:tau-lower-bound}, we have
		\begin{align}
			\tau_\mu \ge \frac{j_{1,1}}{\sqrt{\mu}}\ge D.
		\end{align}
		Since $R_p<D$ for every $p\in\Omega$, so
		$\calA_\mu(\Omega)=\emptyset$, and Theorem \ref{thm:location} implies that
		every second Neumann eigenfunction has no critical points.
	\end{proof}

	We now give an upper bound for the second Neumann eigenvalue
	involving the diameter. The proof employs a standard test-function argument together with the variational characterization of $\mu_2(\Omega)$.
	
	\begin{lemma}\label{lem:Sph-mu-estimate}
		Let $\Omega\subset\mathbb S^2$ be a convex domain contained in a hemisphere and
		\begin{align}
			D=\diam_{\mathbb S^2}(\Omega).
		\end{align}
		Then we have
		\begin{equation}\label{eq:cheng-kroger-explicit}
			\mu_2(\Omega) \leq \frac{4j_{0,1}^2}{D^2}-\frac{1}{3}.
		\end{equation}
		where $j_{0,1}$ is the first positive root of the Bessel function $J_0$.
	\end{lemma}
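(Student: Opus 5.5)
Take two points $x,y\in\overline\Omega$ realizing the diameter, $d_{\mathbb S^2}(x,y)=D$, and consider the two geodesic balls $B_1=B(x,D/2)$ and $B_2=B(y,D/2)$. They are disjoint (their centres are at distance $D$), and the test functions will be built from radial Dirichlet-type profiles on $\Omega\cap B_1$ and $\Omega\cap B_2$. This is a Cheng--Kr\"oger-style comparison.

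\textbf{Step 1 (test functions).} For $i=1,2$ define
\[
\phi_i(q)=f\bigl(d_{\mathbb S^2}(x_i,q)\bigr)\quad\text{on }\Omega\cap B_i,\qquad \phi_i=0\ \text{otherwise},
\]
where $x_1=x$, $x_2=y$, and $f\ge0$ is a radial profile on $[0,D/2]$ with $f(D/2)=0$. A first guess is the Euclidean profile $f(r)=J_0(2j_{0,1}r/D)$, which is decreasing on $[0,D/2]$. Choose constants $\alpha,\beta$, not both zero, with $\int_\Omega(\alpha\phi_1+\beta\phi_2)=0$. Because the supports are disjoint, the variational characterization of $\mu_2$ gives
\[
\mu_2(\Omega)\le\max_i\frac{\int_{\Omega\cap B_i}|\nabla\phi_i|^2}{\int_{\Omega\cap B_i}\phi_i^2}.
\]

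\textbf{Step 2 (sector reduction).} Write each quotient in geodesic polar coordinates about $x_i$. The area element is $\sin r\,dr\,d\theta$, and $|\nabla\phi_i|=|f'(r)|$. By convexity of $\Omega$, for each direction $\theta$ the set $\Omega\cap B_i$ along the ray from $x_i$ (a boundary point, so only directions pointing into $\Omega$ contribute) is an interval $[0,\ell(\theta)]$ with $\ell(\theta)\le D/2$. It therefore suffices to prove the one-dimensional inequality
\[
\int_0^{\ell}f'(r)^2\sin r\,dr\le\Lambda\int_0^{\ell}f(r)^2\sin r\,dr\quad\text{for every }\ell\le D/2,\qquad \Lambda=\frac{4j_{0,1}^2}{D^2}-\frac13 .
\]
Then integrate over $\theta$. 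The claim for all $\ell\le D/2$ follows by the standard monotonicity trick: take $f$ to be a positive decreasing eigenfunction of $-(\sin r f')'=\lambda\sin r\,f$ on $(0,D/2)$ with Dirichlet condition at $D/2$. Integrating by parts on $[0,\ell]$ gives the boundary term $f f'\sin r|_{\ell}\le0$, so the truncated quotient is at most $\lambda$. Hence $\mu_2\le\lambda_1^{\mathrm{sph}}(D/2)$, the first Dirichlet eigenvalue of the spherical cap of radius $D/2$.

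\textbf{Step 3 (cap eigenvalue bound; main obstacle).} It remains to show
\[
\lambda_1\bigl(\text{cap of radius }\rho\bigr)\le\frac{j_{0,1}^2}{\rho^2}-\frac13,\qquad \rho=D/2\le\frac\pi2 .
\]
Use the substitution $y=(\sin r)^{1/2}v$ as in Lemma~\ref{lem:tau-lower-bound}. This converts the Rayleigh quotient into $\int\bigl(|y'|^2+V(r)y^2\bigr)\big/\int y^2$ with potential
\[
V(r)=-\frac{1}{4\sin^2 r}-\frac14 .
\]
This comes from $\tfrac{\cos^2 r}{4\sin^2 r}-\tfrac{1}{2\sin^2 r}$, i.e.\ the order-zero analogue of the previous computation. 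Insert the trial function $y=\sqrt r\,J_0(j_{0,1}r/\rho)$, whose exact Euclidean potential is $-\tfrac{1}{4r^2}$ with eigenvalue $j_{0,1}^2/\rho^2$. The quotient then equals
\[
\frac{j_{0,1}^2}{\rho^2}+\frac{\int (V+\tfrac1{4r^2})y^2}{\int y^2}.
\]
So the crux is the elementary inequality
\[
\frac1{4r^2}-\frac1{4\sin^2 r}-\frac14\le-\frac13\quad\text{on }(0,\pi/2].
\]
I expect this to be the step needing care. Near $0$ the left side is $-\tfrac1{12}-\tfrac14=-\tfrac13+O(r^2)$, so one must check the sign of the $O(r^2)$ correction, namely that $\tfrac1{\sin^2 r}-\tfrac1{r^2}$ is increasing. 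This should follow from the series $\tfrac1{\sin^2 r}=\tfrac1{r^2}+\tfrac13+\tfrac{r^2}{15}+\cdots$ with positive coefficients. The boundary behaviour at $r=0$ of the trial function ($y\sim\sqrt r$) must also be checked to lie in the Friedrichs domain, which it does since $v=J_0$ is smooth. Combining Steps 1--3 with $\rho=D/2$ gives $\mu_2(\Omega)\le 4j_{0,1}^2/D^2-\tfrac13$.
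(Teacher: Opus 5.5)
Your argument is correct. Steps 1--2 are the paper's own reduction. The paper takes interior points $x,y$ with $d(x,y)>D-\varepsilon$ and restricts the radial first Dirichlet eigenfunctions of the two disjoint caps of radius $(D-\varepsilon)/2$ to $\Omega$. It then uses $\partial_n\eta_i=\eta_i'\,\partial_n r_{x_i}\le0$ on $\partial\Omega$ to get $\mu_2(\Omega)\le\Lambda\bigl((D-\varepsilon)/2\bigr)$, and lets $\varepsilon\to0$. Your diameter-realizing boundary centres and ray-by-ray integration by parts use the same star-shapedness supplied by convexity, so they are equivalent.

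The genuine difference is Step 3. The paper simply quotes Baginski's bound $\Lambda(r)\le j_{0,1}^2/r^2-\tfrac13$. You prove it using the Liouville substitution and the trial profile $f=\sqrt{r/\sin r}\,J_0(j_{0,1}r/\rho)$, which reduces everything to $1/\sin^2 r-1/r^2\ge\tfrac13$. That is exactly the inequality behind $\frac{2+\cos^2 r}{4\sin^2 r}\ge\frac{3}{4r^2}$ in Lemma~\ref{lem:tau-lower-bound}, now used in the opposite direction: an upper bound via a trial function rather than a lower bound via potential comparison. Your route makes the lemma self-contained and shows that $-\tfrac13$ is just the value at $r=0$ of the potential difference. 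The citation is shorter.

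One point to tighten. For $y\sim\sqrt r$ the integrals $\int|y'|^2$ and $\int Vy^2$ diverge separately. Integrating the combined integrand literally gives $\int_0^\rho(|y'|^2+Vy^2)\,dr=\int_0^\rho\sin r\,f'^2\,dr-\tfrac12 f(0)^2$. For the Euclidean profile with the same $y$, one gets $\int_0^\rho(|y'|^2-\tfrac{y^2}{4r^2})\,dr=\tfrac{j_{0,1}^2}{\rho^2}\int_0^\rho y^2\,dr-\tfrac12$. So neither of your two intermediate ``exact'' statements holds as a literal integral. However, both defects equal $\tfrac12$, since both profiles take the value $1$ at $r=0$, and they cancel on subtraction. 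Your displayed formula for the quotient is therefore correct. Presenting it as this subtraction, rather than via two separate quotients, removes the need to appeal to the Friedrichs domain.
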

	
	\begin{proof}
		For any $\varepsilon>0$, we choose $x,y\in\Omega$ such that
		$d_{\mathbb S^2}(x,y) > D-\varepsilon$ and put
		$r = (D-\varepsilon)/2<\pi/2$.
		The geodesic balls $\mathcal B(x,r)$ and $\mathcal B(y,r)$ are disjoint and their
		centers lie strictly inside of $\Omega$.
		Let $\eta_1,\eta_2$ be the positive radial first Dirichlet eigenfunctions on
		$\mathcal B(x,r)$ and $\mathcal B(y,r)$ respectively, with eigenvalue $\Lambda(r)$.
		
		We extend $\eta_1$ and $\eta_2$ by zero outside of $\mathcal B(x,r)$ and $\mathcal B(y,r)$ respectively. We set
		$\Omega_i = \Omega\cap\mathcal B(x_i,r)$ ($x_1=x$, $x_2=y$).
		Using $-\Delta\eta_i = \Lambda(r)\eta_i$
		and $\partial_n\eta_i(q)=
		\eta_i'(r_{x_i}(q))\,\partial_n r_{x_i}(q)\le 0$,
		we obtain
		\[
		\int_{\Omega_i}|\nabla\eta_i|^2
		= \Lambda(r)\int_{\Omega_i}\eta_i^2
		+ \int_{\partial\Omega_i\cap\partial\Omega}\eta_i\,\partial_n\eta_i
		\le \Lambda(r)\int_{\Omega_i}\eta_i^2 .
		\]
		
		We choose constants $\alpha,\beta$, not both zero, such that
		$\int_\Omega(\alpha\eta_1+\beta\eta_2)=0$.
		By the Rayleigh quotient for $\mu_2(\Omega)$ and disjointness of $\Omega_1,\Omega_2$,
		\[
		\mu_2(\Omega) \le
		\frac{\int_\Omega|\nabla(\alpha\eta_1+\beta\eta_2)|^2}
		{\int_\Omega|\alpha\eta_1+\beta\eta_2|^2}
		\le \Lambda(r).
		\]
		
		Finally, it is well known that
		(see \cite[Theorem 1]{Baginski1990QAM})
		$$\Lambda(r)
		\leq
		\frac{j_{0,1}^2}{r^2}-\frac{1}{3}.$$
		Thus $\mu_2(\Omega)\le 4j_{0,1}^2/(D-\varepsilon)^2-1/3$.
		Let $\varepsilon\to0^+$, we finish the proof.
	\end{proof}

	The lower bound for $\tau_\mu$ and the upper bound for $\mu_2(\Omega)$ now fit together to
	give a universal diameter-scale exclusion region.
	\begin{theorem}\label{cor:diameter-only}
		Let $\Omega\subset\mathbb S^2$ be a convex domain contained in a hemisphere, $u$ be the second Neumann eigenfunction and
		\[
		D=\diam_{\mathbb S^2}(\Omega), \quad \mu=\mu_2(\Omega).
		\]
		If $p\in\Omega$ is a critical point of $u$, then
		there exists a boundary point $q\in\partial\Omega$ such that
		\[
		d_{\mathbb S^2}(p,q)\geq \frac{j_{1,1}}{\sqrt{4j_{0,1}^2-D^2/3}} D \ge  \frac{j_{1,1}}{2j_{0,1}} D \approx 0.7967 D.
		\]
	\end{theorem}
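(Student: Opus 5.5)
The plan is to combine Theorem \ref{thm:location} with the two quantitative estimates just proved: Lemma \ref{lem:tau-lower-bound} for $\tau_\mu$ and Lemma \ref{lem:Sph-mu-estimate} for $\mu_2(\Omega)$. Let $p\in\Omega$ be a critical point of $u$. By Theorem \ref{thm:location}, $p\in\calA_\mu(\Omega)$, so $R_p\ge\tau_\mu$. Since $\partial\Omega$ is compact and $q\mapsto d_{\mathbb S^2}(p,q)$ is continuous, the maximum defining $R_p$ is attained at some $q\in\partial\Omega$. Hence it suffices to show
\[
\tau_\mu\ge \frac{j_{1,1}D}{\sqrt{4j_{0,1}^2-D^2/3}}.
\]

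First I would dispose of the case $\tau_\mu=\pi$. Since $\Omega$ lies in a hemisphere, $R_p<\pi$, so $\calA_\mu(\Omega)=\varnothing$ and there are no critical points; the statement is then vacuous. In the case $\tau_\mu<\pi$, Lemma \ref{lem:tau-lower-bound} gives $\tau_\mu\ge j_{1,1}/\sqrt{\mu}$. By Lemma \ref{lem:Sph-mu-estimate},
\[
\mu\le \frac{4j_{0,1}^2}{D^2}-\frac13=\frac{4j_{0,1}^2-D^2/3}{D^2}.
\]
This right-hand side is positive because $\mu>0$; alternatively, $D\le\pi$ gives $D^2/3<4j_{0,1}^2$ directly. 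So we may take square roots, and combining the two bounds yields
\[
R_p\ge\tau_\mu\ge \frac{j_{1,1}}{\sqrt{\mu}}\ge \frac{j_{1,1}D}{\sqrt{4j_{0,1}^2-D^2/3}}.
\]

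The second inequality is elementary: $4j_{0,1}^2-D^2/3\le 4j_{0,1}^2$, so the middle quantity is at least $j_{1,1}D/(2j_{0,1})$. Numerically, $j_{1,1}\approx3.8317$ and $j_{0,1}\approx2.4048$ give a ratio of approximately $0.7967$.

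There is no real obstacle here. The only point needing care is that the point $q$ realizing $R_p$ is a boundary point, which follows from compactness of $\partial\Omega$. One should also check that the hypotheses of both lemmas match the present ones: the domain is convex and contained in a hemisphere, and $\mu=\mu_2(\Omega)$.
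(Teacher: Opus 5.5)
Your proposal is correct and follows the paper's proof essentially verbatim: you combine Theorem \ref{thm:location} (giving $R_p\ge\tau_\mu$) with Lemma \ref{lem:tau-lower-bound} and Lemma \ref{lem:Sph-mu-estimate}, after disposing of the case $\tau_\mu=\pi$ exactly as the paper does. Your extra remarks make explicit what the paper leaves implicit, namely that $R_p$ is attained on $\partial\Omega$ by compactness, that $4j_{0,1}^2-D^2/3>0$, and that the second inequality is trivial.
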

	
	\begin{proof}
		If $\tau_\mu=\pi$, then $u$ does not admit any critical points in $\Omega$,
		which contradicts the assumption.
		By Lemma
		\ref{lem:tau-lower-bound},
		we have $\tau_\mu\geq \frac{j_{1,1}}{\sqrt{\mu}}$.
		On the other hand, Lemma \ref{lem:Sph-mu-estimate} shows that
		$\mu D^2 \leq 4j_{0,1}^2-D^2/3$.
		Hence Theorem \ref{thm:location} implies that
		\begin{align}
			R_p\ge \tau_\mu \ge \frac{j_{1,1}}{\sqrt{4j_{0,1}^2-D^2/3}} D .
		\end{align}
		This completes the proof.
	\end{proof}

	\section{Location of possible critical points in $\Hyp$}\label{sec:hyperbolic-location}

	For $\mu\leq 1/4$, Hatcher \cite{Hatcher2026Arxiv} shows that the second Neumann Laplace eigenfunctions have no
	interior critical points.
	For $\mu>1/4$, the same argument as in the spherical case also yields a location
	restriction for possible interior critical points on a bounded convex domain $\Omega\subset\Hyp$.

	Let $\Omega\subset\Hyp$ be a bounded convex domain.
	For $\mu>0$, let $k_\mu$ be the unique smooth solution of
	\begin{equation}\label{eq:hyperbolic-radial-ode}
		k_\mu''(r)+\coth r\,k_\mu'(r)+\mu k_\mu(r)=0,
		\qquad
		k_\mu(0)=1,
		\qquad
		k_\mu'(0)=0.
	\end{equation}
	For a fixed point $p\in\Hyp$, we set
	\[
	K_{p,\mu}(q)=k_\mu\bigl(d_{\Hyp}(p,q)\bigr).
	\]
	Then
	\[
	-\Delta K_{p,\mu}=\mu K_{p,\mu},
	\qquad
	K_{p,\mu}(p)=1,
	\qquad
	\nabla K_{p,\mu}(p)=0.
	\]
	We define the monotonicity radius of the function $k_\mu$ by
	\[
	\bar\tau_\mu
	:=\sup\{\rho>0:k_\mu'(r)<0\text{ for every }0<r<\rho\}.
	\]
	If $k_\mu$ decreases on all $(0,\infty)$, then $\bar\tau_\mu=\infty$.
	By the definition of Legendre functions,
	$K_{p,\mu}$ can also be written as $K_{p,\mu}=P_\nu(\cosh r)$, where $\nu(\nu+1)=-\mu$. Hence, if $\bar\tau_\mu<\infty$, then $\cosh(\bar\tau_\mu)$ is precisely the first root of $ P_\nu'(z)$ in $(1,\infty)$.

	We may adopt methods analogous to the spherical case to obtain the corresponding results in the hyperbolic space.

	\begin{theorem}\label{thm:hyperbolic-location}
		Let $\Omega\subset\Hyp$ be a bounded convex domain, and let $u$ be the second Neumann eigenfunction with eigenvalue $\mu=\mu_2(\Omega)$.  For $p\in\Omega$, we set
		\begin{align}
			R_p=\max_{q\in\partial\Omega}d_{\Hyp}(p,q),\quad
			\calA_\mu(\Omega)=\{p\in\Omega:R_p\geq\bar\tau_\mu\}.
		\end{align}
		Then
		\[ \Crit(u)\cap\Omega\subset \calA_\mu(\Omega). \]
	\end{theorem}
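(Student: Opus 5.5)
The plan is to repeat the spherical argument (Lemmas \ref{lem:no-loop}, \ref{lem:no-nodal-critical}, Proposition \ref{prop:radial-criterion} and Theorem \ref{thm:location}) with $j_\mu$ replaced by $k_\mu$. Fix $p\in\Omega$ with $R_p<\bar\tau_\mu$, so that $k_\mu'(r)<0$ for every $0<r\le R_p$. We argue by contradiction and suppose $\nabla u(p)=0$.

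\textbf{Step 1: the analogues of the two lemmas in $\Hyp$.} The proof of Lemma \ref{lem:no-loop} uses only domain monotonicity of the first Dirichlet eigenvalue, so it carries over verbatim to domains in $\Hyp$ with $\mu\le\lambda_1(\Omega)$. For Lemma \ref{lem:no-nodal-critical} we need the inequality $\mu_2(\Omega)\le\lambda_1(\Omega)$ for bounded convex domains in $\Hyp$. I expect this to be the main point requiring care. The approach I would try first is the Levine--Weinberger/Ashbaugh--Levine type argument: use the first Dirichlet eigenfunction times suitable coordinate-like functions as trial functions. Alternatively, this inequality is presumably already used in Hatcher's adaptation of Miyamoto's method and could be cited from there. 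Once the inequality is available, the rest of Lemma \ref{lem:no-nodal-critical} is purely topological and uses only simple connectivity (from convexity) and Courant's theorem, so it transfers directly. Hence $u(p)\ne0$, and we may assume $u(p)>0$.

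\textbf{Step 2: the comparison function.} Define
\[
w(q)=u(p)\,k_\mu(d_{\Hyp}(p,q))-u(q).
\]
Since $K_{p,\mu}$ is smooth on all of $\Hyp$ (there is no antipodal singularity), $w$ satisfies $-\Delta w=\mu w$, with $w(p)=0$ and $\nabla w(p)=0$.
\begin{itemize}
\item[(a)] $w\not\equiv0$. Otherwise the Neumann condition would force $\partial_n r_p=0$ a.e. on $\partial\Omega$, because $k_\mu'(r_p)\ne0$ on $\partial\Omega$ by the monotonicity assumption; this is impossible for a convex domain.
\item[(b)] Two positive nodal domains. By the local nodal structure, at least four arcs emanate from $p$. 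The no-loop lemma (valid since $\mu\le\lambda_1$) then gives two positive nodal domains $\Omega_1,\Omega_2$ of $w$.
\item[(c)] Sign of the normal derivative. Convexity in $\Hyp$ gives $\partial_n r_p\ge0$ a.e. on $\partial\Omega$, since geodesic balls centered at $p$ are convex and $p\in\Omega$. Because $k_\mu'<0$ on $(0,R_p]$, this yields $\partial_n w=u(p)k_\mu'(r_p)\partial_n r_p\le0$.
\end{itemize}

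\textbf{Step 3: the variational contradiction.} We run the disjoint-test-function argument exactly as in Proposition \ref{prop:radial-criterion}. Each $\phi_i=w\chi_{\Omega_i}$ satisfies $\int|\nabla\phi_i|^2\le\mu\int\phi_i^2$. A suitable combination $\alpha\phi_1+\beta\phi_2$ with mean zero then has Rayleigh quotient at most $\mu_2(\Omega)$, so it must be a second eigenfunction. But it vanishes on an open set, which contradicts unique continuation.

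It follows that every interior critical point satisfies $R_p\ge\bar\tau_\mu$, that is, it lies in $\calA_\mu(\Omega)$. This is only a nonempty restriction when $\mu>1/4$, consistent with Hatcher's result for $\mu\le1/4$.
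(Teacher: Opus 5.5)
Your proof is the paper's proof step for step: the paper likewise transfers Lemmas~\ref{lem:no-loop} and~\ref{lem:no-nodal-critical} to get $u(p)\neq0$, and uses $w=u(p)k_\mu(d_{\Hyp}(p,\cdot))-u$ with $\partial_n w\le0$. That sign comes from the convexity of $\Omega$ (the geodesic from $p$ to each boundary point stays in $\overline\Omega$), not from convexity of balls about $p$. The paper then reruns the two-test-function Rayleigh argument, taking the one external input $\mu_2(\Omega)\le\lambda_1(\Omega)$ directly from \cite[Proposition 3.14]{Mazzeo1991IMRN}.

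Your first suggested route to that input would fail. Since $\int_\Omega|\nabla(u_1g)|^2=\lambda_1\int_\Omega u_1^2g^2+\int_\Omega u_1^2|\nabla g|^2$, every product trial function $u_1g$ has Rayleigh quotient at least $\lambda_1$, so it cannot give $\mu_2\le\lambda_1$. The Friedlander--Mazzeo argument instead uses sums $u_1+cf$, where $f$ is a complex horocyclic wave with $-\Delta f=\lambda_1 f$ and $|\nabla f|^2=\lambda_1|f|^2$ pointwise.
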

	\begin{proof}
		The argument is the hyperbolic analogue of the proof of
		Proposition \ref{prop:radial-criterion}, so we only indicate the necessary
		modifications.
		Let $p\in\Omega\setminus\calA_\mu(\Omega)$. Then
		\[
		R_p<\bar\tau_\mu,
		\qquad
		k_\mu'(r)<0
		\quad\text{for }0<r\le R_p.
		\]
		Suppose, to the contrary, that $\nabla u(p)=0$. Since
		$\mu_2(\Omega)\le\lambda_1(\Omega)$ for bounded convex domains in $\Hyp$
		\cite[Proposition 3.14]{Mazzeo1991IMRN}, the arguments of
		Lemmas \ref{lem:no-loop} and~\ref{lem:no-nodal-critical} apply without
		change, and hence $u(p)\neq0$. Replacing $u$ by $-u$ if necessary,
        we may assume $u(p)>0$.
		
		Define
		\[
		w(q)=u(p)k_\mu\bigl(d_{\Hyp}(p,q)\bigr)-u(q).
		\]
		Then
		\[
		-\Delta w=\mu w,\quad
		w(p)=0,\quad\nabla w(p)=0.
		\]
		Moreover, by convexity,
		$\partial_n d_{\Hyp}(p,\cdot)\ge0$ a.e. on $\partial\Omega$, and therefore
		\[
		\partial_n w =u(p)k_\mu'\bigl(d_{\Hyp}(p,\cdot)\bigr)	\partial_n d_{\Hyp}(p,\cdot)
		\le0 \qquad\text{a.e. on }\partial\Omega.
		\]
		Thus all the hypotheses used in the nodal domain and Rayleigh quotient
		argument of Proposition \ref{prop:radial-criterion} are satisfied, with
		$j_\mu$ replaced by $k_\mu$ and $d_{\mathbb S^2}$ replaced by
		$d_{\Hyp}$. The same argument yields a second Neumann eigenfunction
		vanishing on a nonempty open subset of $\Omega$, contradicting unique continuation.
		Hence $\nabla u(p)\neq0$. Since
		$p\in\Omega\setminus\calA_\mu(\Omega)$ was arbitrary, we conclude that
		\[
		\Crit(u)\cap\Omega\subset\calA_\mu(\Omega).
		\]
		This proves the theorem.
	\end{proof}

	\begin{remark}\label{rem:hyperbolic-hatcher-recovery}
		When $\mu\leq1/4$, Lemma 2.1 in \cite{Hatcher2026Arxiv} says precisely that $\bar\tau_\mu=\infty$,
		so $\calA_\mu(\Omega)=\emptyset$, hence Theorem \ref{thm:hyperbolic-location} covers
		Hatcher's results. In fact, if $\bar\tau_\mu \ge  \diam_{\Hyp}(\Omega)$,
		Theorem \ref{thm:hyperbolic-location} implies that there are no critical points in $\Omega$.
	\end{remark}

	\begin{lemma} \label{lem:hyp-tau-lower-bound3}
		We have
		\begin{equation}\label{eq:hyp-tau-bound2}
			\bar\tau_\mu \ge \frac{j_{1,1}}{\sqrt{\mu}}.
		\end{equation}
	\end{lemma}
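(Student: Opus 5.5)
We will mirror the proof of Lemma \ref{lem:tau-lower-bound}, replacing $\sin r$ by $\sinh r$. If $\bar\tau_\mu=\infty$ there is nothing to prove, so assume $\rho:=\bar\tau_\mu<\infty$. Set $g(r)=-k_\mu'(r)$. Differentiating \eqref{eq:hyperbolic-radial-ode} and using $(\coth r)'=-1/\sinh^2 r$ gives the singular Sturm--Liouville equation
\begin{align}
-\bigl(\sinh r\, g'(r)\bigr)'+\frac{1}{\sinh r}g(r)=\mu\sinh r\, g(r).
\end{align}
Expanding $\coth r=\frac1r+\frac r3+O(r^3)$ shows $k_\mu(r)=1-\frac{\mu}{4}r^2+O(r^4)$, so $g(r)=\frac{\mu}{2}r+O(r^3)$. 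Hence $g>0$ on $(0,\rho)$, $g(\rho)=0$, and $g$ lies in the weighted space $\mathcal H_\rho^{\mathbb H}$, defined as $\mathcal H_\rho^{\mathbb S}$ but with $\sinh$ in place of $\sin$.

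Being a positive eigenfunction, $g$ realizes the bottom of the Friedrichs realization. Therefore $\mu$ equals the infimum over $v\in C_0^\infty(0,\rho)$ of
\[
\frac{\int_0^\rho(\sinh r|v'|^2+v^2/\sinh r)\,dr}{\int_0^\rho \sinh r\, v^2\,dr}.
\]
Substitute $y=(\sinh r)^{1/2}v$ and repeat the spherical computation. The integration by parts now uses $(\coth r)'=-1/\sinh^2 r$, and the quotient becomes
\[
\frac{\int_0^\rho\bigl(|y'|^2+\frac{2+\cosh^2 r}{4\sinh^2 r}y^2\bigr)dr}{\int_0^\rho y^2\,dr}.
\]
We expect this identity to follow verbatim, but we will check the sign of the cross term. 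Expanding $|v'|^2\sinh r$ produces $-\coth r\, yy'+\frac{\cosh^2 r}{4\sinh^2 r}y^2$. Integrating the cross term gives $-\frac12\int y^2/\sinh^2 r$. Adding the $1/\sinh^2 r$ potential then yields the coefficient $\frac{\cosh^2r}{4\sinh^2 r}+\frac{1}{2\sinh^2 r}=\frac{2+\cosh^2 r}{4\sinh^2 r}$, as claimed.

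The key elementary inequality is
\[
\frac{2+\cosh^2 r}{4\sinh^2 r}\ge\frac{3}{4r^2},\qquad r>0.
\]
We expect this to be the only genuinely new step, and it is the one we expect to be the main obstacle. Its difficulty is that $\sinh r>r$ pushes the wrong way, while $\cosh^2 r>1$ helps. Writing $\cosh^2 r=1+\sinh^2 r$, the inequality becomes $3+\sinh^2 r\ge 3\sinh^2 r/r^2$, that is,
\[
r^2\sinh^2 r+3r^2\ge 3\sinh^2 r.
\]
We will prove this with the Taylor series. Set $s=\sinh r/r$, so the inequality reads $r^2 s^2+3\ge 3s^2$, i.e. $s^2(3-r^2)\le 3$. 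For $r^2\ge3$ this is trivial. For $r^2<3$, take square roots and write $\sinh r=r+r^3/6+\dots$. The claim becomes $\sinh r\le r\sqrt{3/(3-r^2)}$. It is enough to compare the series coefficients of $\sinh r$ with those of $r(1-r^2/3)^{-1/2}$, namely $\binom{2k}{k}/(12^k)$ against $1/(2k+1)!$, together with monotonicity in $k$. If the coefficientwise comparison proves awkward, we will instead differentiate $h(r)=r^2\sinh^2 r+3r^2-3\sinh^2 r$ and show $h(0)=h'(0)=\dots=0$ and $h$ increasing.

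With this inequality in hand, $\mu$ dominates the bottom of the Friedrichs realization of $-\frac{d^2}{dr^2}+\frac{3}{4r^2}$ on $(0,\rho)$ with a Dirichlet condition at $\rho$. As before, that bottom equals $j_{1,1}^2/\rho^2$, with eigenfunction $\sqrt r J_1(j_{1,1}r/\rho)$. Hence $\mu\ge j_{1,1}^2/\rho^2$, which is \eqref{eq:hyp-tau-bound2}.
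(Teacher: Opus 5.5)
Your proposal is correct and follows the paper's proof essentially step for step. The paper writes your potential as $\frac{3}{4\sinh^2 r}+\frac14$ and invokes $\frac{1}{\sinh^2 r}\ge\frac{1}{r^2}-\frac13$, which is equivalent to your $\frac{2+\cosh^2 r}{4\sinh^2 r}\ge\frac{3}{4r^2}$; the paper leaves that inequality unproved, and your coefficient comparison supplies it, since $a_k=\binom{2k}{k}(2k+1)!/12^k$ satisfies $a_0=1$ and $a_{k+1}/a_k=(2k+1)(2k+3)/3\ge 1$.
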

	
	\begin{proof}
		Set $g(r)=-k_\mu'(r)$.  Then $g(r)>0$ for all sufficiently small $r>0$.
		Substituting $g$ into \eqref{eq:hyperbolic-radial-ode} and rearranging gives
		\begin{align}\label{eq:Hyp-sturm-liouville}
		    -\bigl(\sinh r g'(r)\bigr)' + \frac{1}{\sinh r}g(r) = \mu \sinh r g(r).
		\end{align}

		If $\bar\tau_\mu=\infty$, the bound is immediate. Otherwise, put $\rho=\bar\tau_\mu$.
        As in the spherical case, the endpoint $r=0$ is singular.
        We therefore use the Friedrichs form associated with the
		Sturm–Liouville equation \eqref{eq:Hyp-sturm-liouville}. Let
		\[
		\mathcal H_\rho^{\mathbb H}:=
		\left\{v\in H^1_{\mathrm{loc}}(0,\rho):v(\rho)=0,
		\ \int_0^\rho\left(\sinh r\,|v'|^2+\frac{|v|^2}{\sinh r}\right)dr<\infty\right\}.
		\]
		The radial expansion of \eqref{eq:hyperbolic-radial-ode} gives
		$k_\mu(r)=1-\mu r^2/4+O(r^4)$ and hence $g(r)=\mu r/2+O(r^3)$ near $0$.
		Hence $g\in \mathcal H_\rho^{\mathbb H}$, $g>0$ on $(0,\rho)$ and $g(\rho)=0$. Therefore
		\begin{align}\label{hyp:rayleigh-quotient}
			\mu=\inf_{0\ne v\in\mathcal H_\rho^{\mathbb H}}
			\frac{\displaystyle\int_0^\rho\left(\sinh r |v'|^2+\frac{v^2}{\sinh r}\right)dr}
			{\displaystyle\int_0^\rho\sinh r\,v^2dr}.
		\end{align}
		
		For $v\in C_0^\infty(0,\rho)$,
		set $y(r) = (\sinh r)^{1/2}v(r)$. A Direct computation yields
		\[
		\sinh r |v'|^2 + \frac{v^2}{\sinh r}
		= |y'|^2 - \coth r y y' + \frac{\cosh^2 r + 4}{4 \sinh^2 r} y^2.
		\]
		Integration by parts for the middle term gives
		\[
		\int_0^{\rho} -\coth r y y' dr = -\frac{1}{2} \int_0^{\rho} \frac{1}{\sinh^2 r} y^2 dr.
		\]
		Substituting this back into \eqref{hyp:rayleigh-quotient}, we obtain
		\begin{equation}\label{eq:hyp-rayleigh3}
			\mu = \inf_{0\ne y\in C_0^\infty(0,\rho)} \frac{ \displaystyle \int_0^{\rho} \left( |y'|^2 + \frac{3}{4\sinh^2 r} y^2 + \frac{1}{4} y^2 \right) dr }{ \displaystyle \int_0^{\rho} y^2 dr }.
		\end{equation}

		The following elementary inequality holds
		\[
		\frac{1}{\sinh^2 r} \ge \frac{1}{r^2} - \frac{1}{3}.
		\]
		Substituting this into \eqref{eq:hyp-rayleigh3}, we obtain:
		\begin{align}
			\mu \ge
			\inf_{0\ne y\in C_0^\infty(0,\rho)} \frac{ \displaystyle \int_0^{\rho} \left( |y'|^2 + \frac{3}{4r^2} y^2 \right) dr }{ \displaystyle \int_0^{\rho} y^2 dr }.	
		\end{align}
		Thus $\mu \ge \frac{j_{1,1}^2 }{\rho^2}$, which completes the proof.
	\end{proof}	
	
	\begin{remark} \label{remark-bound}
		In estimating \eqref{eq:hyp-rayleigh3}, noting that the function $r \mapsto \frac{r}{\sinh r}$ is strictly decreasing,
		we can obtain another estimate for $\bar\tau_\mu$. Indeed, if $\bar\tau_\mu\le D$, then we have
		\[ 
		\frac{1}{\sinh^2 r} > \left( \frac{D}{\sinh D} \right)^2 \frac{1}{r^2},
		\]
		hence, from \eqref{eq:hyp-rayleigh3} and $\frac{D}{\sinh D} < 1$, we have
		$$\mu - \frac{1}{4}>\left( \frac{D}{\sinh D} \right)^2 \frac{j_{1,1}^2}{\bar\tau_\mu^2}.$$
		Rearranging this inequality yields
		\begin{align} \label{hyp-tau-bound-other}
			\bar\tau_\mu > \left( \frac{D}{\sinh D} \right) \frac{j_{1,1}}{\sqrt{\mu - 1/4}}.
		\end{align}
		Compared to \eqref{eq:hyp-tau-bound2}, this estimate is better when $\mu$ is close to $1/4$,
		but when $\mu$ is large or $D$ is large, \eqref{eq:hyp-tau-bound2} is better. 
	\end{remark}

	\begin{theorem}\label{thm:hyp} 
		Let $\Omega\subset\Hyp$ be a bounded convex domain and
		\[
		D=\diam_{\Hyp}(\Omega), \quad \mu=\mu_2(\Omega).
		\]
		If $\mu D^2\le j_{1,1}^2 $, then every second Neumann eigenfunction has no critical points in $\Omega$.
	\end{theorem}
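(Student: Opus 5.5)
The argument will mirror the proof of Theorem \ref{thm:sphere-mu-D}, with Theorem \ref{thm:hyperbolic-location} replacing Theorem \ref{thm:location} and Lemma \ref{lem:hyp-tau-lower-bound3} replacing Lemma \ref{lem:tau-lower-bound}. I would split into two cases according to whether the monotonicity radius $\bar\tau_\mu$ is finite.

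If $\bar\tau_\mu=\infty$, then $k_\mu'<0$ on all of $(0,\infty)$, so $R_p<\bar\tau_\mu$ for every $p\in\Omega$. Hence $\calA_\mu(\Omega)=\emptyset$, and Theorem \ref{thm:hyperbolic-location} immediately gives $\Crit(u)\cap\Omega=\emptyset$. This case also covers Hatcher's regime $\mu\le 1/4$ (Remark \ref{rem:hyperbolic-hatcher-recovery}), although we do not need to single that out.

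If $\bar\tau_\mu<\infty$, Lemma \ref{lem:hyp-tau-lower-bound3} together with the hypothesis $\mu D^2\le j_{1,1}^2$ yields
\[
\bar\tau_\mu\ \ge\ \frac{j_{1,1}}{\sqrt{\mu}}\ \ge\ D .
\]
It remains to check that $R_p<D$ for every interior point $p$. Since $\Omega$ is bounded, $\overline\Omega$ is compact, so the maximum defining $R_p$ is attained at some $q\in\partial\Omega$. We have $d_{\Hyp}(p,q)\le D$, and equality is impossible for interior $p$. Indeed, extend the geodesic from $q$ through $p$ slightly beyond $p$ to a point $p'\in\Omega$. Then $d_{\Hyp}(q,p')>d_{\Hyp}(q,p)$, which gives $d_{\Hyp}(p,q)<\sup_{x\in\Omega} d_{\Hyp}(q,x)\le D$. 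Consequently $R_p<D\le\bar\tau_\mu$, so $p\notin\calA_\mu(\Omega)$. Since $p$ was arbitrary, $\calA_\mu(\Omega)=\emptyset$, and Theorem \ref{thm:hyperbolic-location} shows that $u$ has no critical point in $\Omega$.

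This holds for every second Neumann eigenfunction, because Theorem \ref{thm:hyperbolic-location} applies to any such $u$. There is no genuine obstacle. The only point needing care is the strict inequality $R_p<D$ for interior points, which the geodesic-extension argument above supplies; the rest is a direct combination of already established results.
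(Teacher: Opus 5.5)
Your proof is correct and follows the paper's argument: Lemma \ref{lem:hyp-tau-lower-bound3} together with $\mu D^2\le j_{1,1}^2$ gives $\bar\tau_\mu\ge D$, and Theorem \ref{thm:hyperbolic-location} then finishes. Your case split on $\bar\tau_\mu=\infty$ is unnecessary, since the lemma already covers that case. Your geodesic-extension argument for the strict inequality $R_p<D$ usefully supplies a step the paper leaves implicit here; the paper states it only in the spherical analogue, Theorem \ref{thm:sphere-mu-D}.
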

	
	\begin{proof}
		By assumption and Lemma \ref{lem:hyp-tau-lower-bound3} , we have
		\begin{align}
			\bar\tau_\mu \ge   \frac{j_{1,1}}{\sqrt{\mu }}\ge D.
		\end{align}
		Therefore  $\bar\tau_\mu \ge D$, and
		the conclusion follows directly from Theorem~\ref{thm:hyperbolic-location}.
	\end{proof}

	By using the estimate from Remark \ref{remark-bound}, we can obtain another result.
	\begin{proposition}
		Let $\Omega\subset\Hyp$ be a bounded convex domain and
		\[
		D=\diam_{\Hyp}(\Omega), \quad \mu=\mu_2(\Omega).
		\]
		If $(\mu-\frac{1}{4})\sinh(D)^2\le j_{1,1}^2 $, then every second Neumann eigenfunction has no critical points in $\Omega$.
	\end{proposition}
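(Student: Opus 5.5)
The plan is to reduce the statement to Theorem~\ref{thm:hyperbolic-location}, using the alternative monotonicity-radius bound \eqref{hyp-tau-bound-other} from Remark~\ref{remark-bound} in place of Lemma~\ref{lem:hyp-tau-lower-bound3}. By Theorem~\ref{thm:hyperbolic-location} and Remark~\ref{rem:hyperbolic-hatcher-recovery}, it suffices to show $\bar\tau_\mu\ge D$. Indeed, every $p\in\Omega$ satisfies $R_p<D$: since $\Omega$ is open and bounded, $R_p\le D$, and equality would force some boundary point to realize the diameter from an interior point, which the openness of $\Omega$ excludes. Hence $\bar\tau_\mu\ge D$ gives $\calA_\mu(\Omega)=\emptyset$.

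First I dispose of the trivial cases. If $\mu\le 1/4$, Hatcher's result (Remark~\ref{rem:hyperbolic-hatcher-recovery}) gives $\bar\tau_\mu=\infty$, so we are done. If $\bar\tau_\mu=\infty$ in general, we are also done. So assume $\mu>1/4$ and, for contradiction, $\bar\tau_\mu<D$; in particular $\bar\tau_\mu\le D$, so Remark~\ref{remark-bound} applies.

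Rather than quoting the final rearranged form, I would redo the argument of Remark~\ref{remark-bound} to get a bound in terms of $\sinh$. From \eqref{eq:hyp-rayleigh3} we have
\[
\mu-\tfrac14\ \ge\ \inf_{y}\frac{\int_0^\rho\bigl(|y'|^2+\frac{3}{4\sinh^2 r}y^2\bigr)\,dr}{\int_0^\rho y^2\,dr},\qquad \rho=\bar\tau_\mu.
\]
On $(0,\rho)$, monotonicity of $r/\sinh r$ gives $\frac{1}{\sinh^2 r}>\frac{\rho^2}{\sinh^2\rho}\frac{1}{r^2}$. Applying the Bessel lower bound from the proof of Lemma~\ref{lem:hyp-tau-lower-bound3} to the weighted problem then yields
\[
\mu-\tfrac14>\frac{\rho^2}{\sinh^2\rho}\cdot\frac{j_{1,1}^2}{\rho^2}=\frac{j_{1,1}^2}{\sinh^2\rho}.
\]
The step to watch here is the comparison $\int |y'|^2+c\frac{3}{4r^2}y^2\ge c\int\bigl(|y'|^2+\frac{3}{4r^2}y^2\bigr)$ with $c=\rho^2/\sinh^2\rho<1$. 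It holds because $|y'|^2\ge c|y'|^2$, which is exactly how Remark~\ref{remark-bound} proceeds. Since $\rho<D$ and $\sinh$ is increasing, this gives $(\mu-\frac14)\sinh^2 D>(\mu-\frac14)\sinh^2\rho>j_{1,1}^2$, contradicting the hypothesis. So $\bar\tau_\mu\ge D$, and the conclusion follows.

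The main obstacle is this last step: converting \eqref{hyp-tau-bound-other} into a condition involving $\sinh D$. If one uses the rearranged form $\bar\tau_\mu>\frac{D}{\sinh D}\frac{j_{1,1}}{\sqrt{\mu-1/4}}$ directly, the hypothesis only gives $\bar\tau_\mu>D^2/(\sinh D)^2\cdot\sinh D\ldots$, which is weaker than $D$. One must instead use the sharper weight $\rho/\sinh\rho$ evaluated at $\rho=\bar\tau_\mu$ itself, which produces the clean bound $\sinh\bar\tau_\mu>j_{1,1}/\sqrt{\mu-1/4}$. Once that bound is in hand, the rest is immediate.
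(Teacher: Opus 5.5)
Your argument is correct and is essentially the paper's proof. Like the paper, you reduce to Theorem~\ref{thm:hyperbolic-location} by showing $\bar\tau_\mu\ge D$, dispose of $\mu\le 1/4$ via Hatcher, and use the $r/\sinh r$ comparison of Remark~\ref{remark-bound}. The only change is that you freeze the weight at $\rho=\bar\tau_\mu$ instead of at $D$. This gives the slightly cleaner, $D$-free bound $\sinh\bar\tau_\mu\ge j_{1,1}/\sqrt{\mu-1/4}$, and it incidentally shows $\bar\tau_\mu<\infty\Rightarrow\mu>1/4$, so the appeal to Hatcher is not even needed.

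The ``obstacle'' in your last paragraph, however, is a miscalculation. The hypothesis says exactly $j_{1,1}/\sqrt{\mu-1/4}\ge\sinh D$, so \eqref{hyp-tau-bound-other} gives $\bar\tau_\mu>\frac{D}{\sinh D}\cdot\sinh D=D$ directly. That one-line substitution is precisely the paper's proof.
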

	
	\begin{proof}
		%If $\bar\tau_\mu \ge D$, results follows from Remark \ref{rem:hyperbolic-hatcher-recovery} direction. 
		If $\mu \le 1/4$,  the conclusion follows from Remark \ref{rem:hyperbolic-hatcher-recovery}.
		Therefore, we assume that $\mu > 1/4$.
		If $\bar\tau_\mu \le D$, by assumption and \eqref{hyp-tau-bound-other} , we have
		\begin{align}
			\bar\tau_\mu > \left( \frac{D}{\sinh D} \right) \frac{j_{1,1}}{\sqrt{\mu - 1/4}}\ge D,
		\end{align}
		which contradicts the assumption.
		Therefore  $\bar\tau_\mu > D$, and
		the conclusion follows directly from Theorem \ref{thm:hyperbolic-location}.
	\end{proof}
	\begin{remark}
		This result also covers the case $\mu\le 1/4$. Compared with Theorem \ref{thm:hyp}, each has its own advantages.
	\end{remark}

	\begin{lemma}\label{lem:hyperbolic-diameter-upper2}
		Let $\Omega\subset\Hyp$ be a bounded convex domain and 
		\[
		D=\diam_{\Hyp}(\Omega).
		\]
		Then
		\begin{align}
			\mu_2(\Omega)\leq
			\frac{1}{3}+\frac{4j_{0,1}^2}{D^2}.
		\end{align}
	\end{lemma}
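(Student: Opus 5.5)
The plan is to copy the proof of Lemma \ref{lem:Sph-mu-estimate}, replacing the spherical Dirichlet ball eigenvalue by the hyperbolic one.

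\textbf{Test functions.} Fix $\varepsilon>0$ and choose $x,y\in\Omega$ with $d_{\Hyp}(x,y)>D-\varepsilon$. Set $r=(D-\varepsilon)/2$, so the geodesic balls $\mathcal B(x,r)$ and $\mathcal B(y,r)$ are disjoint. Let $\eta_1,\eta_2$ be the positive radial first Dirichlet eigenfunctions on these balls, with common eigenvalue $\Lambda_{\mathbb H}(r)$, extended by zero. Each $\eta_i$ is radially decreasing, so $\eta_i'\le0$. Convexity of $\Omega$ gives $\partial_n r_{x_i}\ge0$ a.e. on $\partial\Omega$, exactly as in the earlier proofs. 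Integrating by parts on $\Omega_i=\Omega\cap\mathcal B(x_i,r)$ then gives
\[
\int_{\Omega_i}|\nabla\eta_i|^2\le\Lambda_{\mathbb H}(r)\int_{\Omega_i}\eta_i^2 .
\]
The only boundary terms are on $\partial\Omega$, since $\eta_i$ vanishes on the sphere of radius $r$.

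\textbf{Variational step.} Choose $\alpha,\beta$, not both zero, so that $\alpha\eta_1+\beta\eta_2$ has mean zero on $\Omega$. The supports are disjoint, so the Rayleigh quotient gives $\mu_2(\Omega)\le\Lambda_{\mathbb H}(r)$. Since $\Omega$ is open and contains both centers, both test functions are nonzero on $\Omega$.

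\textbf{Main step: bound the ball eigenvalue.} We need
\[
\Lambda_{\mathbb H}(r)\le \frac{j_{0,1}^2}{r^2}+\frac13 .
\]
This is the hyperbolic analogue of Baginski's bound and is the step needing real work.

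First, write the Rayleigh quotient for radial functions with weight $\sinh s$. Substitute $v=(\sinh s)^{-1/2}y$, mirroring the computation in Lemma \ref{lem:hyp-tau-lower-bound3} but without the $1/\sinh$ term. The energy becomes
\[
\int_0^r\Bigl(|y'|^2+\tfrac14 y^2-\frac{1}{4\sinh^2 s}\,y^2\Bigr)\,ds \quad\text{over}\quad \int_0^r y^2\,ds .
\]
Then use the elementary inequality
\[
\frac1{\sinh^2 s}\ge\frac1{s^2}-\frac13 .
\]
The potential is therefore at most $\frac14-\frac1{4s^2}+\frac1{12}=\frac13-\frac1{4s^2}$.

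Now take the trial function $y=\sqrt s\,J_0(j_{0,1}s/r)$. For this $y$, $\int_0^r\bigl(|y'|^2-\frac{1}{4s^2}y^2\bigr)\,ds$ equals $\frac{j_{0,1}^2}{r^2}\int_0^r y^2\,ds$, because this is exactly the Euclidean two-dimensional disk Rayleigh quotient. Hence $\Lambda_{\mathbb H}(r)\le j_{0,1}^2/r^2+1/3$.

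Justifying the integration by parts at $s=0$ needs a check: $y\sim\sqrt s$ there, so the boundary term $\coth s\,y^2$ has a finite limit. The cleanest way to handle this is to work directly in the original variable $v=J_0(j_{0,1}s/r)$. There one compares $\int_0^r \sinh s\,|v'|^2\,ds$ with $\int_0^r\sinh s\,v^2\,ds$, and uses $\sinh s\approx s\,(\sinh s/s)$ together with a weighted comparison, in the spirit of Baginski's argument. This endpoint justification is the main obstacle.

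Combining the steps gives $\mu_2(\Omega)\le 4j_{0,1}^2/(D-\varepsilon)^2+\frac13$. Letting $\varepsilon\to0^+$ yields the lemma.
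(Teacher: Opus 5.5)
Your reduction to $\mu_2(\Omega)\le\Lambda_{\mathbb H}\bigl((D-\varepsilon)/2\bigr)$ is the same as the paper's; the paper simply repeats the proof of Lemma~\ref{lem:Sph-mu-estimate}. The paper then quotes \cite[Theorem 3.1]{Berge2023AM} for $\Lambda_{\mathbb H}(r)\le\frac13+j_{0,1}^2/r^2$, whereas you try to prove this bound yourself. As written, that proof asserts two false identities at $s=0$.

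You noticed that $\coth s\,y^2$ has a finite limit at $0$, but that limit is $1$, not $0$. For $y=\sqrt s\,J_0(j_{0,1}s/r)$, i.e.\ $v=\sqrt{s/\sinh s}\,J_0(j_{0,1}s/r)$ with $v(0)=1$, one gets
\[
\int_0^r\sinh s\,|v'|^2\,ds=\int_0^r\Bigl(|y'|^2+\tfrac14y^2-\tfrac{y^2}{4\sinh^2 s}\Bigr)ds+\tfrac12 .
\]
Likewise $\int_0^r\bigl(|y'|^2-\frac{y^2}{4s^2}\bigr)ds=\frac{j_{0,1}^2}{r^2}\int_0^ry^2\,ds-\frac12$, not $\frac{j_{0,1}^2}{r^2}\int_0^ry^2\,ds$. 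Here $|y'|^2$ and $y^2/(4s^2)$ are each non-integrable at $0$; only their difference is integrable. The two defects are equal and cancel, so your final inequality is true. But the argument as written does not prove it, and the endpoint issue you left open is exactly this.

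The repair is to compare the hyperbolic and Euclidean energies of the same $y$ directly, instead of evaluating the two $y$-forms separately. With $w=y/\sqrt s=J_0(j_{0,1}s/r)$ and $v=y/\sqrt{\sinh s}$, a pointwise computation gives
\[
\sinh s\,|v'|^2-s\,|w'|^2=\Bigl(\tfrac14+\tfrac1{4s^2}-\tfrac1{4\sinh^2 s}\Bigr)y^2-\tfrac12\Bigl(\bigl(\coth s-\tfrac1s\bigr)y^2\Bigr)' .
\]
Now the boundary term is $O(s^2)$ at $0$ and vanishes at $s=r$. Integrate this identity. Then use $\int_0^rs|w'|^2\,ds=\frac{j_{0,1}^2}{r^2}\int_0^rs\,w^2\,ds=\frac{j_{0,1}^2}{r^2}\int_0^r\sinh s\,v^2\,ds$ together with $\frac1{s^2}-\frac1{\sinh^2 s}\le\frac13$. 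This gives $\Lambda_{\mathbb H}(r)\le\frac{j_{0,1}^2}{r^2}+\frac13$, since $v$ is a smooth admissible radial test function on the ball.

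Do not switch to your suggested fallback trial function $v=J_0(j_{0,1}s/r)$. Its hyperbolic Rayleigh quotient tends to $\frac12$ as $r\to\infty$, so it cannot give the constant $\frac13$ for large diameters. With the fix above, your argument is a correct, self-contained substitute for the citation. The remaining steps (disjoint balls, the sign of $\partial_n\eta_i$ from convexity, orthogonalization, and $\varepsilon\to0$) are fine.
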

	
	\begin{proof}
		We may adopt methods analogous to those in Lemma \ref{lem:Sph-mu-estimate} to get
		$\mu_2(\Omega)\le \Lambda(D/2)$, where $\Lambda(r)$ is the first Dirichlet eigenvalue on a geodesic ball
		with radius $r$. It is well known that (see \cite[Theorem 3.1]{Berge2023AM})
		\begin{align} 
			\Lambda(r)\le \frac{1}{3}+\frac{j_{0,1}^2}{r^2},
		\end{align}
		which completes the proof.
	\end{proof}

	By the lower bound for $\bar\tau_\mu$ and the upper bound for $\mu_2(\Omega)$, we have the following results.
	
	\begin{theorem}\label{cor:hyperbolic-diameter-location3}
		Let $\Omega\subset\Hyp$ be a bounded convex domain, $u$ be the second Neumann eigenfunction, and  $D=\diam_{\Hyp}(\Omega)$. If $p\in\Omega$ is a critical point of $u$, then there exists $q\in\partial\Omega$ such that
		\begin{align}
			d_{\Hyp}(p,q) \geq \frac{j_{1,1}}{\sqrt{ 4j_{0,1}^2+D^2/3}}  D.
		\end{align}
	\end{theorem}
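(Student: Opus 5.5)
The plan mirrors the proof of Theorem \ref{cor:diameter-only}, with the spherical ingredients replaced by their hyperbolic counterparts.

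Let $p\in\Omega$ be an interior critical point of $u$, and set $\mu=\mu_2(\Omega)$.

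\emph{Step 1: localize $p$.} By Theorem \ref{thm:hyperbolic-location}, $p\in\calA_\mu(\Omega)$. Hence
\[
R_p=\max_{q\in\partial\Omega}d_{\Hyp}(p,q)\ge\bar\tau_\mu .
\]
In particular $\bar\tau_\mu<\infty$, since $R_p\le D<\infty$. The maximum defining $R_p$ is attained at some $q\in\partial\Omega$, because $\partial\Omega$ is compact and the distance is continuous. So it suffices to bound $\bar\tau_\mu$ from below by the stated quantity.

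\emph{Step 2: lower bound for $\bar\tau_\mu$.} Lemma \ref{lem:hyp-tau-lower-bound3} gives $\bar\tau_\mu\ge j_{1,1}/\sqrt{\mu}$. Note that this step does not require $\mu>1/4$. If $\mu\le 1/4$, then by Remark \ref{rem:hyperbolic-hatcher-recovery} there are no critical points at all, so the statement holds vacuously. In either case we have a valid lower bound whenever a critical point exists.

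\emph{Step 3: upper bound for $\mu$.} Lemma \ref{lem:hyperbolic-diameter-upper2} gives $\mu\le \frac13+\frac{4j_{0,1}^2}{D^2}$. Multiplying by $D^2$,
\[
\mu D^2\le 4j_{0,1}^2+\frac{D^2}{3}.
\]
Since $\mu>0$, this yields
\[
\frac{1}{\sqrt{\mu}}\ge \frac{D}{\sqrt{4j_{0,1}^2+D^2/3}} .
\]

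\emph{Step 4: combine.} Chaining Steps 1--3,
\[
d_{\Hyp}(p,q)=R_p\ge\bar\tau_\mu\ge\frac{j_{1,1}}{\sqrt\mu}\ge \frac{j_{1,1}}{\sqrt{4j_{0,1}^2+D^2/3}}\,D .
\]

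All the substantive analysis is contained in the cited lemmas, so no real obstacle is expected. The only points needing care are that the maximum defining $R_p$ is attained on the compact boundary, and that Lemma \ref{lem:hyp-tau-lower-bound3} applies whether or not $\bar\tau_\mu$ is finite.
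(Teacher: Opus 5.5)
Your proposal is correct and follows the paper's proof essentially verbatim. Theorem \ref{thm:hyperbolic-location} gives $R_p\ge\bar\tau_\mu$, Lemma \ref{lem:hyp-tau-lower-bound3} gives $\bar\tau_\mu\ge j_{1,1}/\sqrt{\mu}$, and Lemma \ref{lem:hyperbolic-diameter-upper2} turns $1/\sqrt{\mu}$ into the stated diameter bound; your added remarks on the attainment of $R_p$ and on the case $\mu\le 1/4$ are harmless but not needed.
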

	\begin{proof}
		By Theorem \ref{thm:hyperbolic-location} and Lemma \ref{lem:hyp-tau-lower-bound3}, we have
		$R_p\ge \bar\tau_\mu \ge \frac{j_{1,1}}{\sqrt{\mu}}$.
		Lemma \ref{lem:hyperbolic-diameter-upper2} gives $\mu D^2\le 4j^2_{0,1}+D^2/3$.
		Therefore
		\begin{align}
			R_p\ge \frac{j_{1,1}}{\sqrt{\mu}}\ge \frac{j_{1,1}}{\sqrt{ 4j_{0,1}^2+D^2/3}}  D.
		\end{align}
		This completes the proof.
	\end{proof}
	
	\begin{remark} 
		The coefficient $\frac{j_{1,1}}{\sqrt{ 4j_{0,1}^2+D^2/3}}$ is strictly decreasing with respect to $D$. As $D \to 0$,
		which corresponds to approaching Euclidean space, we have
		\[
		\lim_{D \to 0} \frac{j_{1,1}}{\sqrt{ 4j_{0,1}^2+D^2/3}}   = \frac{j_{1,1}}{2j_{0,1}} \approx 0.7967.
		\]
		This remains a nontrivial estimate when the coefficient is greater than $0.5$, i.e., $\frac{j_{1,1}}{\sqrt{ 4j_{0,1}^2+D^2/3}} >\frac{1}{2}\Leftrightarrow D<\sqrt{12(j_{1,1}^2-j_{0,1}^2)}\approx 10.33$. Otherwise, the conclusion is trivial.
	\end{remark}

	\section{Hot spots constant in space forms}\label{sec:hot spots constant}
	In this section, we use Green’s identity with the singular
	fundamental solution of the Helmholtz equation to establish upper bounds for the hot spots constant
	on bounded convex domains in the two-dimensional space forms.
	Our approach is purely analytic, in contrast to the methods of Mariano-Panzo-Wang \cite{Mariano2023PA} and Steinerberger \cite{Steinerberger2022RMI}, which rely on 
	the heat kernel and the reflected Brownian motion.

	\subsection{Hot spots constant in $\R^2$ }
	
	We begin with the Euclidean case. By combining Green's identity with geometric and spectral bounds for convex planar domains, the estimate of the hot-spots constant can be reduced to a one-dimensional majorization problem for a scale-invariant combination of Bessel functions. This reduction is summarized in the following theorem.
	
	\begin{theorem}\label{thm:hot-spots-constant}
		Let $\Omega\subset\mathbb R^2$ be a bounded convex domain. For
		$c\in\mathbb R$, define
		\begin{align}
			F_c(z)=z\bigl(Y_1(z)+cJ_1(z)\bigr),\qquad 0<z\le 2j_{0,1}.
		\end{align}
		%and extend it continuously to $z=0$ by $F_c(0)=-2/\pi$.
		Define
		\begin{equation}\label{eq:minimax}
			\mathscr M=
			\inf_{\substack{c,\alpha\in\R\\ \beta\ge0}}
			\left\{
			\alpha+\beta (j'_{1,1})^2:
			|F_c(z)|\le \alpha+\beta z^2
			\ \text{for every }z\in (0,2j_{0,1}]
			\right\}.
		\end{equation}
		where $j'_{1,1}\approx 1.841$ is the first positive zero of $J_1'(z)$. 
		Then
		\begin{equation}\label{eq:general-euclidean-bound}
			\mathfrak C(\Omega)\le \frac{\pi}{2}\,\mathscr M .
		\end{equation}
	\end{theorem}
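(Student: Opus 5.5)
The plan is to represent $u(p)$ at an arbitrary interior point $p$ as a boundary integral of $u$ against the normal derivative of a singular radial Helmholtz solution centred at $p$, and then to bound that kernel using convexity, the majorant $\alpha+\beta z^2$, and two spectral–geometric inequalities. Fix a second Neumann eigenfunction $u$ with $\mu=\mu_2(\Omega)$, a point $p\in\Omega$, and write $r=|x-p|$. For $c\in\R$ set
\[
\Gamma_c(x)=-\tfrac14\bigl(Y_0(\sqrt\mu\, r)+cJ_0(\sqrt\mu\, r)\bigr).
\]
Since $-\frac14 Y_0(\sqrt\mu r)$ is the real part of the outgoing fundamental solution $\frac i4H_0^{(1)}(\sqrt\mu r)$, and $J_0(\sqrt\mu r)$ is a smooth homogeneous solution, we have $\Delta\Gamma_c+\mu\Gamma_c=-\delta_p$ in the distributional sense. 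I would apply Green's second identity to $(u,\Gamma_c)$ on $\Omega\setminus \overline{B(p,\varepsilon)}$ and let $\varepsilon\to0$. The logarithmic singularity $Y_0(z)\sim\frac2\pi\log z$ makes the small circle contribute exactly $u(p)$, and the Neumann condition kills the term $\Gamma_c\,\partial_n u$. This gives
\[
u(p)=\int_{\partial\Omega}u\,\partial_n\Gamma_c\,ds
=\frac14\int_{\partial\Omega}u\,\sqrt\mu\bigl(Y_1+cJ_1\bigr)(\sqrt\mu r)\,\partial_n r\,ds
=\frac14\int_{\partial\Omega}u\,\frac{F_c(\sqrt\mu r)}{r}\,\partial_n r\,ds,
\]
using $Y_0'=-Y_1$ and $J_0'=-J_1$. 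The piecewise $C^{1,\alpha}$ boundary and the weak-trace interpretation of $\partial_n u$ only require a routine approximation, since $\Gamma_c$ is smooth near $\partial\Omega$.

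Next I use convexity. As in Proposition \ref{prop:radial-criterion}, $\partial_n r\ge0$ a.e. on $\partial\Omega$. Moreover, $\frac{\partial_n r}{r}\,ds=d\theta$ is the angular measure seen from $p$, and $r\,\partial_n r\,ds$ integrates to $2|\Omega|$ (divergence theorem applied to $x-p$). Next I locate the range of $z=\sqrt\mu\, r$. For $x\in\partial\Omega$ we have $r\le D$, and the Euclidean version of Lemma \ref{lem:Sph-mu-estimate}, run with two disjoint discs of radius nearly $D/2$ and $\Lambda(\rho)=j_{0,1}^2/\rho^2$, gives $\mu D^2\le 4j_{0,1}^2$. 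Hence $z\in(0,2j_{0,1}]$.

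Now take any admissible triple $(c,\alpha,\beta)$ from \eqref{eq:minimax}, so that $|F_c(z)|\le\alpha+\beta z^2$ with $\beta\ge0$. Since $\partial_n r\ge 0$, we get
\[
|u(p)|\le\frac{\|u\|_{L^\infty(\partial\Omega)}}{4}\int_{\partial\Omega}\Bigl(\frac{\alpha}{r}+\beta\mu r\Bigr)\partial_n r\,ds
=\frac{\|u\|_{L^\infty(\partial\Omega)}}{4}\bigl(2\pi\alpha+2\beta\mu|\Omega|\bigr).
\]
To finish, I invoke the Szeg\H{o}--Weinberger inequality $\mu_2(\Omega)|\Omega|\le\pi (j'_{1,1})^2$; because $\beta\ge0$ it may be inserted in the second term. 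This yields $|u(p)|\le\frac\pi2\bigl(\alpha+\beta (j'_{1,1})^2\bigr)\|u\|_{L^\infty(\partial\Omega)}$. Taking the supremum over $p$ and over second eigenfunctions, and the infimum over admissible $(c,\alpha,\beta)$, gives \eqref{eq:general-euclidean-bound}. If no admissible triple existed the bound would be vacuous, but near $z=0$ we have $F_c(z)\to-2/\pi$, so $F_c$ is bounded on the interval and admissible triples do exist.

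The main obstacle I expect is the rigorous Green-identity step. This means checking the $\varepsilon\to0$ limit at the logarithmic singularity, including the sign and normalisation that produce exactly $u(p)$. It also means justifying the boundary integral, and the facts $\partial_n r\ge0$, $\frac{\partial_n r}{r}ds=d\theta$ and $\int r\,\partial_n r\,ds=2|\Omega|$, at corners of a merely piecewise $C^{1,\alpha}$ convex boundary. I would handle both by approximating $\Omega$ from inside by smooth convex domains, or by noting that the corner set has measure zero. The remaining steps are direct once the representation formula is in place.
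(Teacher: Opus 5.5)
Your proposal is correct and follows essentially the same route as the paper: Green's identity with the kernel $-\frac14 Y_0(\sqrt\mu\, r)$ corrected by a multiple of $J_0(\sqrt\mu\, r)$, the angular measure $\partial_n r\,ds=r\,d\theta$, the bound $\mu D^2\le 4j_{0,1}^2$ to confine $z$ to $(0,2j_{0,1}]$, and the area identity together with the Szeg\H{o}--Weinberger inequality to control the $\beta z^2$ term. The differences are cosmetic: you fold the $J_0$ term into the kernel from the start, and you obtain the area identity via the divergence theorem instead of the polar area formula. One harmless slip: letting $\varepsilon\to0$ actually gives $u(p)=-\int_{\partial\Omega}u\,\partial_n\Gamma_c\,ds$, because the small-circle contribution $u(p)$ sits on the same side as the boundary integral; the sign is irrelevant once you pass to absolute values.
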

	
	\begin{proof}
		Let $u$ be any fixed second Neumann eigenfunction.
		If the global maximum of $|u|$ is attained on $\partial\Omega$, the desired estimate is immediate. We may therefore assume that it is attained at an interior point $p\in\Omega$. Replacing $u$ by $-u$ if necessary, choose $p\in\Omega$
		such that $M=u(p)>0$. Set $m = \max\limits_{x \in \partial\Omega} |u(x)|$
		and $\mu=\mu_2(\Omega)$.
		
		We consider the fundamental solution of the Helmholtz equation $\Delta \Gamma + \mu \Gamma = -\delta_p$ in $\mathbb{R}^2$,
		where $\delta_p$ is the Dirac delta function.
		It is well known that the fundamental solution is given by
		\begin{equation}
			\Gamma(x, p)=\Gamma(|x-p|) = -\frac{1}{4} Y_0(\sqrt{\mu} r),
		\end{equation}
		where $r=|x-p|$ and $Y_0$ is the Bessel function of the second kind of order zero.
		We apply Green’s identity to $u$ and $\Gamma$ on $\Omega-B_\varepsilon(p)$,
		then
		\begin{equation}
			\int_{\Omega-B_\varepsilon(p)} (u \Delta\Gamma - \Gamma \Delta u) \, dx = \int_{\partial (\Omega-B_\varepsilon(p))} \left( u \frac{\partial \Gamma}{\partial n} - \Gamma\frac{\partial u}{\partial n} \right) ds.
		\end{equation}
		Let $\varepsilon \to 0$, by using the Neumann boundary condition, together with the fact
		$\Gamma(x,p) \sim -\frac{1}{2\pi}\ln r$ as $r\to 0$,
		we have
		\begin{align}\label{eq:green-3id}
			M = -\int_{\partial \Omega} u(x) \frac{\partial \Gamma}{\partial n}(x,p) ds(x).
		\end{align}
		Using the identity $Y_0' = -Y_1$, we compute the normal derivative of $\Gamma$, then
		\begin{align}\label{eq:deri-of-Gamma}
			\frac{\partial \Gamma}{\partial n} = -\frac{1}{4} \sqrt{\mu} Y_0'(\sqrt{\mu} r) \frac{\partial r}{\partial n} = \frac{\sqrt{\mu}}{4} Y_1(\sqrt{\mu} r) \frac{\partial r}{\partial n}.
		\end{align}
		We now compute in polar coordinates centered at $p$,
		the arc length element $ds$ and the angle element $d\theta$ satisfy the fundamental relation
		\begin{align} \label{eq:polar-ds}
			\frac{\partial r}{\partial n}ds = r d\theta.
		\end{align}
		Substituting \eqref{eq:deri-of-Gamma} and \eqref{eq:polar-ds} into \eqref{eq:green-3id}, we have
		\begin{align} \label{eq:key-id}
			M = -\frac{1}{4} \int_0^{2\pi} u(x) \sqrt{\mu} r(\theta) Y_1(\sqrt{\mu} r(\theta))  d\theta.
		\end{align}
		Let  $z(\theta) = \sqrt{\mu} r(\theta)$, then \eqref{eq:key-id} becomes
		\begin{align}\label{eq:key-id2}
			M = -\frac{1}{4} \int_0^{2\pi} u(x) z(\theta)  Y_1(z(\theta))  d\theta.
		\end{align}

		Let $\Phi(x,p)=\Phi(|x-p|)=J_0(\sqrt{\mu}r)$, it satisfies
		$\Delta\Phi + \mu \Phi = 0$ in $\mathbb{R}^2$.
		Applying Green’s identity to $u$ and $\Phi$, analogously to the preceding argument and computation, we obtain
		\begin{align}\label{eq:key-id3}
			0 =  \int_0^{2\pi} u(x) z(\theta) J_1(z(\theta)) d\theta.
		\end{align}
		Multiplying \eqref{eq:key-id3} by an arbitrary constant $-c/4$ and adding it to \eqref{eq:key-id2}, we obtain
		\begin{align}
			M=-\frac{1}{4} \int_0^{2\pi} u(x) z(\theta)  \big(Y_1(z(\theta))+cJ_1(z(\theta))\big)  d\theta.
		\end{align}
		Since $|u(x)| \le m$ on the boundary, taking the supremum over all second Neumann eigenfunctions,
		we have
		\begin{equation}\label{eq:euclidean-average-bound}
			\mathfrak C(\Omega)\le\frac14\int_0^{2\pi}|F_c(z(\theta))| d\theta,  \quad\text{for any constant } c.
		\end{equation}

		It is well known that the second Neumann eigenvalue on convex domains has the following bound (\cite[Page 293]{Cheng1975MathZ})
		\begin{align}
			\mu D^2 \le 4 j_{0,1}^2,
		\end{align}
		where $D$ is the diameter of $\Omega$.
		Since $r(\theta)\le D$, then $z(\theta)=\sqrt{\mu} r(\theta)\le \sqrt{\mu}D\le 2j_{0,1}$. 
		On the other hand, the area formula gives
		\[
		|\Omega|=\frac12\int_0^{2\pi}r(\theta)^2\,d\theta.
		\]
		Consequently,
		\begin{equation}\label{eq:z-second-moment}
			\frac1{2}\int_0^{2\pi}z(\theta)^2 d\theta
			=\mu|\Omega|
			\le \pi (j'_{1,1})^2,
		\end{equation}
		where the last inequality is the sharp isoperimetric inequality $\mu |\Omega|\le\pi(j'_{1,1})^2$
		for the second Neumann eigenvalue in plane \cite{Ashbaugh1995JLMS}.

		Now let $(c,\alpha,\beta)$ be any feasible triple in
		\eqref{eq:minimax}. From \eqref{eq:euclidean-average-bound} and
		\eqref{eq:z-second-moment}, we have
		\begin{align}
			\mathfrak C(\Omega) \le\frac14\int_0^{2\pi}|F_c(z(\theta))| d\theta
			\le \frac14\int_0^{2\pi}
			\bigl(\alpha+\beta z(\theta)^2\bigr)d\theta
			\le\frac{\pi}{2}\bigl(\alpha+\beta (j'_{1,1})^2\bigr).
		\end{align}
		Taking the infimum over all feasible triples proves
		\eqref{eq:general-euclidean-bound}.
	\end{proof}

	\begin{lemma}\label{lem:euclidean-explicit-majorant}
		For $c=\frac15$, one has
		\begin{equation}\label{eq:explicit-bessel-majorant}
			\left|z\left(Y_1(z)+\frac15J_1(z)\right)\right|
			\le	\frac7{10}+\frac{z^2}{14},\qquad 0< z\le2j_{0,1}.
		\end{equation}
	\end{lemma}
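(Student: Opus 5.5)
We need to bound $|F(z)|$ with $F(z)=z(Y_1(z)+\tfrac15J_1(z))$ on $(0,2j_{0,1}]$, where $2j_{0,1}\approx 4.8096$, by the quadratic $q(z)=\tfrac7{10}+\tfrac{z^2}{14}$. Note $q(2j_{0,1})\approx 0.7+1.652=2.352$. The plan is to split $(0,2j_{0,1}]$ into a small-$z$ region, where series expansions are used, and a moderate-$z$ region, where certified numerical bounds are used.

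\textbf{Small $z$.} From the ascending series,
\[
zY_1(z)=-\frac{2}{\pi}+\frac{2}{\pi}\,\frac{z^2}{2}\log\frac z2+O(z^2),
\]
so $F(0^+)=-2/\pi\approx-0.6366$, which is below $0.7$ in absolute value. On a small interval, say $z\le 1$, we write the truncated series with explicit alternating-tail remainders:
\[
zY_1(z)=\frac{2}{\pi}zJ_1(z)\log\frac z2-\frac2\pi-\frac1\pi\sum_k(\dots)z^{2k+2}.
\]
This gives $|F(z)|\le 0.64+Cz^2|\log z|+\dots$, and we check the result directly against $q(z)$. The term $\tfrac{z^2}{14}$ absorbs the growth, since near $0$ the function $F$ increases from $-0.6366$ toward $0$.

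\textbf{Moderate $z$.} On $[1,2j_{0,1}]$, $F$ oscillates: $Y_1$ has its first zero at $y_{1,1}\approx2.197$. So $F$ is negative up to about $2.1$ (the $\tfrac15J_1$ shift moves the zero slightly), then positive, reaching a maximum near $z\approx 3.5$–$4$. There $zY_1\approx 3.7\cdot0.41\approx1.5$ and $\tfrac15 zJ_1\approx\tfrac15\cdot3.7\cdot(-0.01)\approx0$, so $F\approx1.5$, while $q(3.7)\approx0.7+0.978=1.68$. At the right endpoint, $Y_1(4.81)\approx0.29$ and $J_1(4.81)\approx-0.34$, so $F\approx 4.81(0.29-0.068)\approx1.07<2.35$. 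The tightest spot is therefore near the positive maximum, around $z\in[3,4]$, and likely also near the negative dip around $z\approx1$. At $z=1$, $Y_1(1)=-0.781$ and $J_1(1)=0.44$, so $F\approx-0.693$ against $q(1)\approx0.771$; this margin is also tight.

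To certify both regions we would partition $[1,2j_{0,1}]$ into intervals of length $h$ (say $h=0.05$). On each interval we evaluate $F$ at the grid points with rigorous error, using tabulated values from DLMF or interval arithmetic. We then bound the variation between grid points by a derivative estimate. For this we use
\[
F'(z)=z\bigl(Y_0(z)+\tfrac15J_0(z)\bigr),
\]
which follows from $(zY_1)'=zY_0$, together with the global bounds $|J_0|\le1$ and $|Y_0|\le 0.52$ for $z\ge1$. This gives $|F'|\le 0.72z$ on the interval, and hence $F$ stays below $q$ once the grid margin exceeds $0.72z\,h/2$ minus the local increase of $q$.

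\textbf{Main obstacle.} The main difficulty is the tight margin near the positive maximum of $F$ (around $z\approx3.7$, margin about $0.15$) and near $z\approx1$ (margin about $0.08$). These require an accurate value of the maximum. To get it, we would locate the critical point as the root of $Y_0+\tfrac15J_0=0$ by a certified bisection. This root lies slightly beyond $y_{0,2}\approx3.958$. We would then evaluate $F$ there with interval error bounds, and compare $q$ and $F$ directly at the critical points rather than via grid steps.
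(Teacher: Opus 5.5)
\textbf{There is a genuine gap.} A certified-grid argument could prove this lemma in principle, but as written your plan puts the difficulty in the wrong places. The region you treat as easy is where the inequality is almost an equality.

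\textbf{Small $z$.} Write $q(z)=\tfrac{7}{10}+\tfrac{z^2}{14}$ and $Q=Y_0+\tfrac{1}{5}J_0$, so that $F'=zQ$. Your claim that near $0$ the function $F$ increases from $-2/\pi$ toward $0$ is false. Since $Y_0(z)\to-\infty$ as $z\to0^+$, $F$ first \emph{decreases}, to about $-0.726$ at the first zero of $Q$ ($z\approx0.71$), while $q$ only rises to about $0.735$.

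Because $(q+F)'=z\bigl(Q+\tfrac{1}{7}\bigr)$, the minimum of $q+F$ sits at the root of $Q=-\tfrac{1}{7}$, near $z\approx0.585$. There $q+F\approx0.004$, which is the smallest margin anywhere in the lemma. A rough series bound cannot certify this:
\begin{itemize}
\item Replacing $2/\pi$ by $0.64$ already uses up $0.0034$ of that margin.
\item Even the exact second-order truncation $-\tfrac{2}{\pi}+\tfrac{z^2}{\pi}\ln\tfrac{z}{2}+\bigl(\tfrac{2\gamma-1}{2\pi}+\tfrac{1}{10}\bigr)z^2$ gives $|F(0.585)|\approx0.728>q(0.585)\approx0.7244$.
\item Only the $z^4\ln z$ and $z^4$ terms (about $+0.0074$ together) bring $|F|$ back below $q$.
\end{itemize}
So the tight spot in $(0,1]$ is near $z\approx0.585$ with margin about $0.004$, not near $z\approx1$ with margin about $0.08$ as you estimate.

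\textbf{Positive side.} The binding point here is not the maximum of $F$ either. That maximum is at the second zero of $Q$, which lies \emph{before} $y_{0,2}$: indeed $Q(y_{0,2})=\tfrac{1}{5}J_0(y_{0,2})<0$ and $Q$ is decreasing there. It sits near $z\approx3.76$ with margin about $0.12$. Note also that $J_1(3.7)>0$, since $3.7<j_{1,1}$. The minimum of $q-F$ is instead at the root of $Q=\tfrac{1}{7}$, near $z\approx3.43$, with margin only about $0.04$. There your grid with $h=0.05$ and $|F'|\le0.72z$ has an interpolation error of about $0.06$, which exceeds the margin. (Also $\max Y_0=Y_0(y_{1,1})\approx0.5208$, so $|Y_0|\le0.52$ is slightly false.)

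\textbf{The missing idea.} The paper applies your derivative identity to $q\pm F$ rather than to $F$, and then controls the level sets of $Q$.
\begin{itemize}
\item Since $(q\pm F)'=z\bigl(\tfrac{1}{7}\pm Q\bigr)$, the critical points of $q\pm F$ are exactly the roots of $Q=\mp\tfrac{1}{7}$.
\item Next, $Q'=J_1\bigl(R-\tfrac{1}{5}\bigr)$ with $R=-Y_1/J_1$, and the Wronskian gives $R'=-2/(\pi zJ_1^2)<0$.
\item Using the limits of $R$ at $0^+$ and at $j_{1,1}^{\pm}$, plus one certified value $R(2j_{0,1})>\tfrac{1}{5}$, it follows that $Q$ increases to a single maximum and then decreases on $(0,2j_{0,1}]$.
\item Hence each of $q\pm F$ has at most two interior critical points. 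These are bracketed by sign checks of $Q\pm\tfrac{1}{7}$, and positivity is checked at these points and at the endpoints.
\item At $a_1\in(0.58,0.59)$ one uses $|(q+F)'|<10^{-2}$ on the bracket, so the mean-value correction is only $10^{-4}$.
\end{itemize}
This reduces the lemma to a handful of certified evaluations and automatically locates both tight points. Your brute-force grid could be repaired, but only with a much finer mesh and near-exact evaluations around $z\approx0.585$ and $z\approx3.43$. Your proposal neither anticipates nor budgets for either.
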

	\begin{proof}
		The proof below is somewhat tedious but essentially trivial,
		it merely consists of comparing the values of two functions. See Figure \ref{fig:1}.
		Set
		\begin{align}
			F(z)=z\left(Y_1(z)+\frac15J_1(z)\right),\quad
			H(z)=\frac7{10}+\frac{z^2}{14},\quad
			Q(z)=Y_0(z)+\frac15J_0(z).
		\end{align}
		Although $Y_1$ is singular at the origin, $zY_1(z)$ still has the limit $-\frac2\pi$ as $z\to 0$.
		Thus $F$ admits a continuous extension to
		$z=0$, with
		\(
		F(0)=-\frac2\pi.
		\)

		The desired estimate \eqref{eq:explicit-bessel-majorant} is
		equivalent to
		\begin{equation}\label{eq:two-positive-functions}
			H(z)+F(z)\ge0,
			\qquad
			H(z)-F(z)\ge0.
		\end{equation}
		We therefore study the minima of the two functions $H+F$ and
		$H-F$ on the interval
		$[0,L]$, where $L=2j_{0,1}$.
		Using the recurrence relations of
		Bessel functions (\cite[(10.6.2)]{DLMF}), we have
		\begin{align}
			\frac{d}{dz}\bigl(zJ_1(z)\bigr)=& J_1(z)+z(J_0(z)-\frac{1}{z}J_1(z)) = zJ_0(z), \\
			\frac{d}{dz}\bigl(zY_1(z)\bigr)=& Y_1(z)+z(Y_0(z)-\frac{1}{z}Y_1(z)) = zY_0(z).
		\end{align}
		It follows that
		\begin{equation}\label{eq:HF-derivatives}
			(H+F)'(z)
			=
			z\left(Q(z)+\frac17\right),
			\qquad
			(H-F)'(z)
			=
			z\left(\frac17-Q(z)\right).
		\end{equation}
		Consequently, it is therefore enough to check $H+F$ at the endpoints and at the
		solutions of $Q=-1/7$, and to check $H-F$ at the endpoints and at
		the solutions of $Q=1/7$.

		We next analyze the behavior of $Q$.
		Differentiating
		$Q$ and using $J_0'=-J_1$ and $Y_0'=-Y_1$ gives
		\[
		Q'(z)
		=
		-\left(Y_1(z)+\frac15J_1(z)\right)=J_1(z)\left(R(z)-\frac15\right),
		\]
		where $R(z)=-\frac{Y_1(z)}{J_1(z)}$.
		By using the Bessel Wronskian
		identity (\cite[(10.5.2)]{DLMF}), we have 
		\begin{equation}\label{eq:R1-monotone}
			R'(z)
			=
			-\frac{2}{\pi zJ_1(z)^2}<0,
		\end{equation}
		on every interval on which $J_1$ does not vanish.
		Thus $R$ is strictly decreasing on every interval free of zeros of
		$J_1$.
		First, on the interval $(0,j_{1,1})$, one has $J_1(z)>0$. The standard Bessel expansions (\cite[(10.2.2), (10.8.1)]{DLMF})
		\[
		J_1(z)=\frac z2+O(z^3),
		\qquad
		Y_1(z)=-\frac{2}{\pi z}+O(z\log z)
		\]
		show that $R(z)\to+\infty$ as $z\to0^+$.
		On the other hand, as $z\to j_{1,1}^{-}$,
		$J_1(z)\to0^+$ and $Y_1(j_{1,1})>0$, so
		$R(z)\to-\infty$.
		Since $R$ is strictly decreasing, the equation
		$R(z)=1/5$
		has exactly one solution in $(0,j_{1,1})$. Hence $Q'$ has exactly
		one zero on $(0,j_{1,1})$.
		Second, on the interval $(j_{1,1},L)$,
		since $L<j_{1,2}$, so $J_1$ has no further zero in $(j_{1,1},L]$.
        A rigorous numerical evaluation\footnote{All decimal bounds involving special functions below were obtained by interval evaluation with outward rounding; only the resulting enclosures needed for the indicated sign estimates are displayed.}
        gives $R(L)>0.70>1/5$.
		Since $R$ is strictly decreasing on $(j_{1,1},L]$ and
		$R(z)\to+\infty$ as $z\to j_{1,1}^{+}$.
		Therefore $Q'$ has no zero on $(j_{1,1},L]$.
		In summary, $Q$ has exactly one critical point on $(0,L]$, 
		it increases up to a unique maximum and then decreases thereafter.
		In particular, each of the equations
		\[
		Q(z)=-\frac17,
		\qquad
		Q(z)=\frac17
		\]
		has at most two solutions in $(0,L)$.
		
		We now locate these solutions. A rigorous numerical evaluation
		gives $Q(0.58)+\frac17<-0.007<0$ and $Q(0.59)+\frac17>0.004>0$,
		thus there exists $a_1\in(0.58,0.59)$
		such that $Q(a_1)=-\frac17$.
		The same numerical evaluation gives $Q(4.12)+\frac17>0$ and $Q(4.13)+\frac17<0$,
		and therefore there exists $a_2\in(4.12,4.13)$
		such that $Q(a_2)=-\frac17$.
		Since the equation $Q=-\frac17$ has at most two solutions, these are
		its only solutions in $(0,L)$.
		Likewise, there exists $b_1\in(0.86,0.88)$
		and $b_2\in(3.43,3.44)$ such that they are
		the only solutions of $Q=\frac 17$ in $(0,L)$.

		It remains to check the values of $H+F$ at the endpoints
		and at their respective critical points.
		Let $G(z)=H(z)+F(z)$.
		At the left endpoint,
		$G(0)=\frac7{10}-\frac2\pi>0$,
		at the right endpoint $L=2j_{0,1}$, the same certified evaluations yield
		$G(L)>3$.
		For the critical points of $G$, namely $a_1$ and $a_2$,
		we illustrate the verification at the first critical point
		$a_1\in(0.58,0.59)$.
		For any $z\in (0.58,0.59)$,
		since $Q$ is monotonic over the interval $(0.58,0.59)$,
		then we have
		\[
		\big|G'(z)\big|=z\big|Q(z)+\frac17\big|< \max\{\big|Q(0.58)+\frac17\big|,\big|Q(0.59)+\frac17\big| \}<1\times10^{-2}.
		\]
		Since $|a_1-0.58|<10^{-2}$, the mean value theorem yields
		\[
		\begin{aligned}
			G(a_1)
			&\ge G(0.58)
			-\sup_{0.58\le z\le0.59}|G'(z)|
			\,|a_1-0.58|\\
			&>0.003-10^{-4}
			>0.002.
		\end{aligned}
		\]
		Similarly, one obtains $G(a_2)>3.39$.
		Thus $G$ is positive at both endpoints and at every interior
		critical point. We conclude that
		\begin{align}
			H(z)+F(z)\ge0,\qquad 0\le z\le L.
		\end{align}
		Similarly, applying the same argument, we have
		\begin{align}
			H(z)-F(z)\ge0,\qquad 0\le z\le L.
		\end{align}

		Combining the last two inequalities yields
		\begin{align}
			-H(z)\le F(z)\le H(z),\qquad0\le z\le L,
		\end{align}
		and therefore
		\begin{align}
			\left|z\left(Y_1(z)+\frac15J_1(z)\right)\right|
			\le\frac7{10}+\frac{z^2}{14},\qquad0< z\le2j_{0,1}.
		\end{align}
		This proves \eqref{eq:explicit-bessel-majorant}.
	\end{proof}
	
	\begin{figure}[htb]
		\centering
		\includegraphics[width=0.8\textwidth]{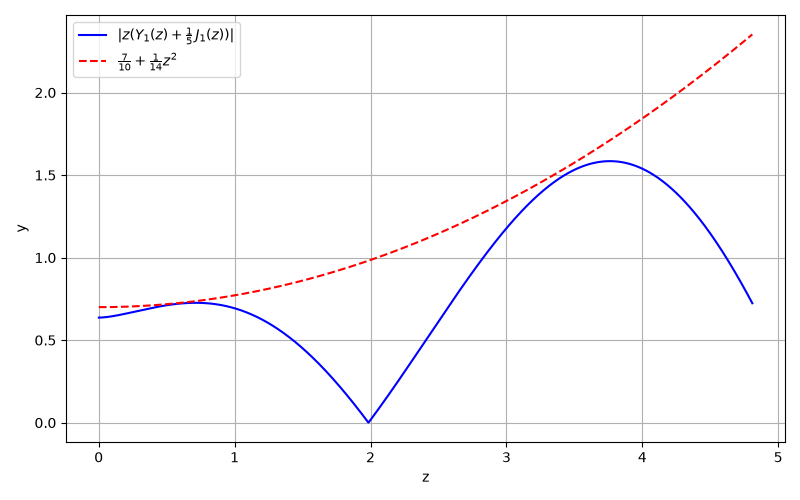}
		\caption{$\left|z\left(Y_1(z)+\frac15J_1(z)\right)\right|$ and $\frac{7}{10}+\frac{z^2}{14}$ on $[0,2j_{0,1}]$}\label{fig:1}
	\end{figure}

	Taking $c=1/5$ in Theorem \ref{thm:hot-spots-constant} and using the bound in Lemma \ref{lem:euclidean-explicit-majorant},
	we obtain a bound for the hot‑spots constant.
	\begin{corollary}\label{thm:euclidean-hot-spots-improved}
		Let $\Omega\subset\mathbb R^2$ be a bounded convex domain. Then
		\begin{equation}\label{eq:improved-euclidean-hot-spots}
			\mathfrak C(\Omega)\le\frac{\pi}{2}\left(\frac7{10}+\frac{(j'_{1,1})^2}{14}\right)< 1.48.
		\end{equation}
	\end{corollary}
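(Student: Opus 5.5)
This corollary follows directly from Theorem \ref{thm:hot-spots-constant} and Lemma \ref{lem:euclidean-explicit-majorant}; it remains to verify the numerics.

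First, I will show that the triple $(c,\alpha,\beta)=(1/5,\,7/10,\,1/14)$ is feasible for the minimax problem \eqref{eq:minimax}. We have $\beta=1/14\ge0$. Lemma \ref{lem:euclidean-explicit-majorant} gives exactly $|F_{1/5}(z)|\le \alpha+\beta z^2$ for every $z\in(0,2j_{0,1}]$. Hence
\[
\mathscr M\le \frac7{10}+\frac{(j'_{1,1})^2}{14},
\]
and \eqref{eq:general-euclidean-bound} yields the first inequality in \eqref{eq:improved-euclidean-hot-spots}.

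Second, I will check that the resulting constant is below $1.48$. We use $j'_{1,1}\approx1.84118$, and in particular the certified bound $j'_{1,1}<1.8412$. Then $(j'_{1,1})^2<3.3901$, so
\[
\frac{(j'_{1,1})^2}{14}<0.24215,\qquad \frac7{10}+\frac{(j'_{1,1})^2}{14}<0.94215 .
\]
Multiplying by $\pi/2<1.5708$ gives
\[
\frac{\pi}{2}\left(\frac7{10}+\frac{(j'_{1,1})^2}{14}\right)<1.4800 .
\]
This bound is only marginally below the threshold, since the true value is about $1.4799$. The arithmetic therefore needs slightly sharper enclosures, for example $\pi/2<1.570797$ and $j'_{1,1}<1.841184$. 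With these, $(j'_{1,1})^2<3.390039$, which gives $0.7+0.2421457=0.9421457$ and a product $<1.47994<1.48$.

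The main obstacle is the tight numerical margin in this last step. The decimal bounds for $j'_{1,1}$ and $\pi$ must be enclosed rigorously, with outward rounding, in the same spirit as the certified evaluations used in Lemma \ref{lem:euclidean-explicit-majorant}.
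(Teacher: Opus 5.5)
Your proposal is correct and follows the paper's own argument: the triple $(c,\alpha,\beta)=(1/5,\,7/10,\,1/14)$ is feasible by Lemma~\ref{lem:euclidean-explicit-majorant}, so Theorem~\ref{thm:hot-spots-constant} gives the bound, and the remaining step is the numerical check (which the paper does not write out). One small remark: your first enclosure is already enough, since $0.94215\times1.5708<1.47993<1.48$, so the sharper enclosures you add afterwards are harmless but not needed.
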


	\begin{remark} \label{rem:euclidean-numerical-optimizer}
		For comparison, numerical computations suggest that the optimal parameters are approximately
		\[
		c_*\approx0.1754,
		\qquad
		\alpha_*\approx0.7021,
		\qquad
		\beta_*\approx0.0668,
		\]
		with the two contact points $x_*\approx0.6125,y_*\approx3.4741$. 
		Thus numerical computations suggest that the
		minimax problem may yield the approximate bound
		\[
		\mathfrak C(\Omega)\le\frac{\pi}{2}\mathscr M_{\rm num}
		=\frac{\pi}{2}\left(\alpha_*+\beta_*(j'_{1,1})^2\right) \approx 1.459
		\]
	\end{remark}

	\subsection{Hot spots constant in $\Sph$}
	
	Following the approach of Theorem \ref{thm:hot-spots-constant}, we derive the following result on the sphere.
	
	\begin{proposition}\label{thm:hot-spots-constant-sphere}
		Let $\Omega\subset \Sph$ be a bounded convex domain contained in a hemisphere, with diameter $D = \diam_{\Sph}(\Omega)$ and $\mu = \mu_2(\Omega)$. Let $\nu$ be the positive constant that satisfies $\nu(\nu+1) = \mu$. Then
		\begin{equation}\label{eq:hot-spots-constant-sph}
			\mathfrak{C}(\Omega) \le \inf_{c\in\R}\sup_{r\in (0, D)} \Big| \big(\mathsf{Q}_\nu'(\cos r)+c \mathsf{P}_\nu'(\cos r)\big)\Big| \sin^2 r ,
		\end{equation}
		where $\mathsf{P}_\nu$ and $\mathsf{Q}_\nu$ are Ferrers functions of the first and second kind of degree $\nu$, respectively.
	\end{proposition}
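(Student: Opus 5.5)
The plan is to copy the Green-identity argument of Theorem \ref{thm:hot-spots-constant}, replacing the Euclidean Bessel kernels by Ferrers functions in geodesic polar coordinates centred at an interior maximum point.

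\emph{Setup.} Fix a second Neumann eigenfunction $u$ and let $m=\max_{\partial\Omega}|u|$. If $\max|u|$ is attained on $\partial\Omega$, the ratio is $1$. In that case I still need the right-hand side of \eqref{eq:hot-spots-constant-sph} to be at least $1$. I would get this from the expansion $\mathsf{Q}_\nu(\cos r)=-\ln r+O(1)$ as $r\to0$. This gives $\mathsf{Q}_\nu'(x)\sim\frac{1}{2(1-x)}\sim\frac{1}{1-x^2}$ as $x\to1^-$, so $\mathsf{Q}_\nu'(\cos r)\sin^2r\to1$. Meanwhile $\mathsf{P}_\nu'(\cos r)\sin^2r\to0$ for every $c$, so the supremum over $r\in(0,D)$ is $\ge1$. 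Otherwise, as in the Euclidean case, take $p\in\Omega$ with $M=u(p)=\max|u|>0$.

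\emph{Two Green identities.} Set $\Gamma(q)=\frac{1}{2\pi}\mathsf{Q}_\nu(\cos r)$ with $r=d_{\Sph}(p,q)$.
\begin{itemize}
\item Since $\mathsf{Q}_\nu(\cos r)$ solves the radial equation \eqref{eq:spherical-radial-ode} and behaves like $-\ln r$ at $r=0$, we get $\Delta\Gamma+\mu\Gamma=-\delta_p$ on $\Sph$ minus the antipode of $p$.
\item Because $\Omega$ lies in a hemisphere and has $D<\pi$, $\Gamma$ is smooth on $\overline\Omega\setminus\{p\}$.
\item Apply Green's identity on $\Omega\setminus\mathcal B(p,\varepsilon)$, use the Neumann condition and let $\varepsilon\to0$. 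This gives $M=-\int_{\partial\Omega}u\,\partial_n\Gamma\,ds$.
\item Now $\partial_n\Gamma=-\frac{1}{2\pi}\sin r\,\mathsf{Q}_\nu'(\cos r)\,\partial_nr$.
\item By convexity, $\Omega$ is star-shaped with respect to $p$ in geodesic polar coordinates, with $\partial_n r\ge0$ and boundary $r=r(\theta)$. The spherical area element then yields $\partial_n r\,ds=\sin r\,d\theta$.
\item Together these give $M=\frac1{2\pi}\int_0^{2\pi}u\,\mathsf{Q}_\nu'(\cos r(\theta))\sin^2 r(\theta)\,d\theta$.
\end{itemize}
Doing the same with the regular solution $\Phi=\mathsf{P}_\nu(\cos r)$, where no delta term appears, gives
\[
0=\int_0^{2\pi}u\,\mathsf{P}_\nu'(\cos r(\theta))\sin^2 r(\theta)\,d\theta .
\]

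\emph{Conclusion.} Add $c/(2\pi)$ times the second identity to the first and bound $|u|\le m$ on $\partial\Omega$. Since $0<r(\theta)\le D$ (the value at $r=D$ is covered by continuity), this yields
\[
M\le m\,\sup_{r\in(0,D)}\bigl|\mathsf{Q}_\nu'(\cos r)+c\mathsf{P}_\nu'(\cos r)\bigr|\sin^2r .
\]
Then take the infimum over $c$ and the supremum over $u$.

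\emph{Main obstacle.} The key steps to verify carefully are the normalization of the fundamental solution (the constant $\frac1{2\pi}$, checked via the small-$r$ log behaviour) and the spherical polar identity $\partial_nr\,ds=\sin r\,d\theta$. At nonsmooth boundary points the limiting argument is understood in the weak/a.e. sense, as elsewhere in the paper.
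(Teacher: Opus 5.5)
Your proposal is correct and follows the paper's proof essentially step for step. Both use Green's identity with $\Gamma=\frac{1}{2\pi}\mathsf Q_\nu(\cos r)$ and with the regular solution $\mathsf P_\nu(\cos r)$, the polar relation $\partial_n r\,ds=\sin r\,d\theta$, and a free multiple $c$ of the second identity. Your additional observation that $\mathsf Q_\nu'(\cos r)\sin^2 r\to1$ while $\mathsf P_\nu'(\cos r)\sin^2 r\to0$ as $r\to0^+$ is useful: it shows the right-hand side is always at least $1$, which covers the boundary-maximum case that the paper dismisses with ``without loss of generality''.
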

	\begin{proof}
		Without loss of generality, we assume that the global maximum of $|u|$ is attained at an interior point $p\in \Omega$.
		Replacing $u$ by $-u$ if necessary, choose $p\in\Omega$ such that $M=u(p)>0$. 
		Set $m = \max\limits_{x \in \partial\Omega} |u(x)|$.
		We consider the fundamental solution of the Helmholtz equation $\Delta \Gamma + \mu \Gamma = -\delta_p$ in $\Sph\backslash\{-p\}$.
		It is well known that the fundamental solution is given by
		\begin{equation}
			\Gamma(x, p) = \Gamma(r) = \frac{1}{2\pi} \mathsf{Q}_\nu(\cos r),
		\end{equation}
		where $r = d_{\Sph}(x, p)$.
		As $r \to 0$, $\Gamma(r) \sim -\frac{1}{2\pi}\ln r$, which yields the required Dirac delta singularity.
		
		Applying Green’s identity to $u$ and $\Gamma$ on $\Omega \backslash \mathcal{B}_\varepsilon(p)$,
		as in the Euclidean case, we have
		\begin{align}\label{eq:green-3id-sph}
			M = -\int_{\partial \Omega} u(x) \frac{\partial \Gamma}{\partial {n}}(x,p) ds(x),
		\end{align}
		and the normal derivative is
		\begin{align}\label{eq:deri-of-Gamma-sph}
			\frac{\partial \Gamma}{\partial n} = -\frac{1}{2\pi} \mathsf{Q}_\nu'(\cos r) \sin r \frac{\partial r}{\partial n}.
		\end{align}
		In geodesic polar coordinates on $\Sph$, the metric is $ds^2 = dr^2 + \sin^2 r d\theta^2$. Geometric analysis of the boundary yields the relation
		\begin{align} \label{eq:polar-ds-sph}
			\frac{\partial r}{\partial n}ds = \sin r d\theta.
		\end{align}
		Substituting \eqref{eq:deri-of-Gamma-sph} and \eqref{eq:polar-ds-sph} into \eqref{eq:green-3id-sph},
		we have
		\begin{align} \label{eq:key-id-sph}
			M = \frac{1}{2\pi} \int_0^{2\pi} u(x) \mathsf{Q}_\nu'(\cos r(\theta)) \sin^2 r(\theta) d\theta.
		\end{align}

		Let $\Phi(x,p) = \mathsf{P}_\nu(\cos r)$ be the regular solution that satisfies $\Delta\Phi + \mu \Phi = 0$ in $\Sph\backslash\{-p\}$. We have
		\begin{align}\label{eq:key-id3-sph}
			0 =  \int_0^{2\pi} u(x) \mathsf{P}_\nu'(\cos r(\theta)) \sin^2 r(\theta) d\theta.
		\end{align}
		Multiplying \eqref{eq:key-id3-sph} by an arbitrary constant $\frac{c}{2\pi}$ and adding it to \eqref{eq:key-id-sph}, we obtain
		\begin{align}
			M = \frac{1}{2\pi} \int_0^{2\pi} u(x) \big(\mathsf{Q}_\nu'(\cos r)+c \mathsf{P}_\nu'(\cos r)\big) \sin^2 r d\theta.
		\end{align}

		Finally, we derive an upper bound. Since $|u(x)| \le m$ on the boundary, we have
		\begin{equation}
			\begin{aligned}
				M &\le \frac{m}{2\pi} \int_0^{2\pi}  \Big| \big(\mathsf{Q}_\nu'(\cos r)+c \mathsf{P}_\nu'(\cos r)\big)\Big| \sin^2 r  d\theta \\
				&\le m \max_{\theta } \Big| \big(\mathsf{Q}_\nu'(\cos r(\theta))+c \mathsf{P}_\nu'(\cos r(\theta))\big)\Big| \sin^2 r(\theta) .
			\end{aligned}
		\end{equation}
		Since the domain is bounded by diameter $D$, $r(\theta) < D$. Taking the supremum over $r \in (0, D)$ and infimum over $c$, this completes the proof.
	\end{proof}

	In Proposition \ref{thm:hot-spots-constant-sphere},
	the upper bound on the right side
	of \eqref{eq:hot-spots-constant-sph} still depends on $\mu$ and $D$.
	Compared to Theorem~\ref{thm:hot-spots-constant}, 
	the main difficulty is that the variable $\mu$ and the radial distance $r$ are coupled into a single variable $z = \sqrt{\mu}r$ in the Euclidean case. 
	However, $\mu = \nu(\nu+1)$ and $r$ appear separately in the spherical case. 
	This difference from the Euclidean setting forces us to adopt a different procedure in what follows.

	To estimate the perturbation of the Bessel equation arising below, we first record the following Green representation formula for the corresponding nonhomogeneous equation.
	
	\begin{lemma}\label{lem:bessel-apro}
		Let $f(t)=O(t^\alpha)$ for some $\alpha>0$, and $G$ solve
		\begin{equation}\label{eq:g-model-eq}
			G''- \frac{1}{t}G' +G=f(t),\quad 0<t<T.
		\end{equation}
		Let $U(t)=tJ_1(t)$ and $V(t)=-\frac{\pi}{2}tY_1(t)$ be the two fundamental solutions
		of the corresponding homogeneous equation.
		Suppose that 
		$H(t)=aV(t)+bU(t) $
		is chosen so that $G(t)-H(t)=o(t^2)$ as $t\to0^+$.
		Then
		\begin{equation}\label{eq:general-green-representation}
			G(t)=H(t)+\int_0^t K(t,s)f(s)ds,
		\end{equation}
		where $K(t,s)=\big(U(t)V(s)-V(t)U(s)\big)s^{-1}$ is the corresponding Green kernel.
	\end{lemma}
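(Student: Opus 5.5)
The plan is to use variation of parameters for the second-order equation $G''-\frac1tG'+G=f$. Written in Sturm--Liouville form, it reads $\bigl(t^{-1}G'\bigr)'+t^{-1}G=t^{-1}f$.

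First I verify that $U(t)=tJ_1(t)$ and $V(t)=-\frac{\pi}{2}tY_1(t)$ solve the homogeneous equation. Using $(tJ_1)'=tJ_0$, we get $U''=J_0-tJ_1$. Hence $U''-\frac1tU'+U=J_0-tJ_1-J_0+tJ_1=0$, and the same computation works for $Y_1$.

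Next I compute the Wronskian. With $W(t)=U(t)V'(t)-U'(t)V(t)$, the coefficient $-1/t$ gives $W'=\frac1tW$, so $W=Ct$. Then
\[
W=tJ_1\cdot\Bigl(-\tfrac{\pi}{2}\Bigr)tY_0-tJ_0\cdot\Bigl(-\tfrac{\pi}{2}\Bigr)tY_1=-\tfrac{\pi}{2}t^2\bigl(J_1Y_0-J_0Y_1\bigr).
\]
The Bessel Wronskian gives $J_1Y_0-J_0Y_1=\frac{2}{\pi t}$, so $W(t)=-t$.

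Variation of parameters then says that a particular solution is
\[
G_p(t)=\int_0^t\frac{V(t)U(s)-U(t)V(s)}{W(s)}f(s)\,ds=\int_0^t\frac{U(t)V(s)-V(t)U(s)}{s}f(s)\,ds=\int_0^tK(t,s)f(s)\,ds.
\]
One checks directly that $G_p$ satisfies the equation: differentiating under the integral, the boundary terms vanish because $K(t,t)=0$ and $\partial_tK(t,s)|_{s=t}=-W(t)/t=1$.

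The main obstacle is convergence at $s=0$ and the small-$t$ size of $G_p$. Near $0$ we have $U(s)\sim s^2/2$ and $V(s)\to 1$, so $|K(t,s)|\lesssim\bigl(U(t)+|V(t)|s^2\bigr)/s$. With $|f(s)|\lesssim s^\alpha$ the integral converges, and
\[
|G_p(t)|\lesssim t^2\int_0^ts^{\alpha-1}\,ds+\int_0^ts^{1+\alpha}\,ds=O(t^{2+\alpha})=o(t^2).
\]

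Finally, $G-G_p$ solves the homogeneous equation, so $G-G_p=a'V+b'U$ for some constants. Therefore $G-H-G_p=(a'-a)V+(b'-b)U$. Since both $G-H$ and $G_p$ are $o(t^2)$, this combination is $o(t^2)$. Because $V(0)=1$, we get $a'=a$. Because $U\sim t^2/2$, we then get $b'=b$. This yields the representation formula \eqref{eq:general-green-representation}.
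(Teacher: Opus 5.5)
Your proposal is correct and follows essentially the same route as the paper. In both, the solution is written as a homogeneous combination $c_1U+c_2V$ plus the variation-of-parameters integral $\int_0^tK(t,s)f(s)\,ds$; that integral is shown to be $o(t^2)$ using $|K(t,s)|\lesssim t^2/s$ for small $0<s<t$ together with $f=O(s^\alpha)$; and the limits $V(t)\to1$, $U(t)\sim t^2/2$ then force the homogeneous coefficients to vanish. Your version adds explicit checks the paper simply calls well known, namely the Wronskian $W=-t$ and the verification that the integral is a particular solution.
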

	
	\begin{proof}
		Let $R(t)=G(t)-H(t)$.
		Then $R$ solves the non-homogeneous equation,
		\begin{align} \label{eq:non-homo}
			R''-\frac1tR'+R=f(t).
		\end{align}
		It is well known that the general solution of \eqref{eq:non-homo} is as follows:
		\begin{align}\label{eq:non-homo-gen-solution}
			R(t)=c_1U(t)+c_2V(t)+\int_0^tK(t,s)f(s)ds.
		\end{align}
		The standard Bessel expansions (\cite[(10.2.2), (10.8.1)]{DLMF})
		\begin{align}
			U(t)=\frac{t^2}{2}+O(t^4),\quad V(t)=1+O(t^2\log t)
		\end{align}
		imply, uniformly for sufficiently small $0<s<t$,
		\begin{align}
			K(t,s)\le C\frac{t^2}{s}.
		\end{align}
		Therefore, the integral term in \eqref{eq:non-homo-gen-solution} is $o(t^2)$ as $t\to0^+$.
		Hence the matching condition $R(t)=o(t^2)$
		forces $c_1=c_2=0$.
		Therefore,
		\[ G(t)=H(t)+\int_0^t K(t,s)f(s) ds.\]
		This proves the lemma.
	\end{proof}
	
	We next establish uniform bounds for the fundamental solutions of the Bessel-type equation and their derivatives on the interval needed in the subsequent estimates.
	
	\begin{lemma}\label{lem:U-V-estimate}
		Let $U(t)=tJ_1(t)$ and $V(t)=-\frac{\pi}{2}tY_1(t)$.
		Then, for every $0<t\le 5$, we have
		\begin{equation}\label{eq:global-U-V-estimate}
			|U(t)|\le \frac{19}{10},\quad|U'(t)|\le \frac{19}{10},\quad|V(t)|\le \frac{27}{10},\quad|V'(t)|\le \frac{27}{10}.
		\end{equation}
	\end{lemma}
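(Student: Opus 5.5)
Each of the four bounds will be checked separately on $(0,5]$ using the differentiation identities already used in Lemma \ref{lem:euclidean-explicit-majorant}. They are
\[
U'(t)=\frac{d}{dt}\bigl(tJ_1(t)\bigr)=tJ_0(t),\qquad V'(t)=-\frac{\pi}{2}tY_0(t).
\]
This reduces everything to locating the extrema of explicit Bessel combinations on a compact interval. Near $t=0$ the expansions $U(t)=\frac{t^2}{2}+O(t^4)$ and $V(t)=1+O(t^2\log t)$ show that $U$ and $V$ extend continuously to $t=0$, with $U(0)=0$ and $V(0)=1$. Likewise $U'(t)=tJ_0(t)\to0$ and $V'(t)=-\frac{\pi}{2}tY_0(t)\to0$, since $tY_0(t)=O(t\log t)$. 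So all four functions are continuous on $[0,5]$, and it suffices to bound each at the endpoints and at its interior critical points.

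\emph{Bounds for $U$ and $V$.} The critical points of $U$ on $(0,5)$ are the zeros of $J_0$ there, namely $j_{0,1}\approx2.405$ and $j_{0,2}\approx5.52>5$. So $|U|$ on $[0,5]$ is at most $\max\{U(j_{0,1}),|U(5)|\}$. We have $U(j_{0,1})=j_{0,1}J_1(j_{0,1})\approx2.405\cdot0.519\approx1.25$ and $|U(5)|=5|J_1(5)|\approx1.64$, both below $19/10$. Similarly, the critical points of $V$ are the zeros of $Y_0$, namely $y_{0,1}\approx0.894$ and $y_{0,2}\approx3.958$. We have $V(y_{0,1})=-\frac{\pi}{2}y_{0,1}Y_1(y_{0,1})\approx-\frac{\pi}{2}\cdot0.894\cdot(-0.87)\approx1.22$, $V(y_{0,2})\approx-\frac{\pi}{2}\cdot3.958\cdot0.40\approx-2.49$, and $V(5)=-\frac{\pi}{2}\cdot5\,Y_1(5)\approx-1.16$. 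Together with $V(0)=1$, this gives $|V|\le 27/10$.

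\emph{Bounds for $U'$ and $V'$.} We have $U''(t)=J_0(t)-tJ_1(t)$, so the interior critical points of $U'$ solve $J_0(t)=tJ_1(t)$. Rather than locating these exactly, I would use the crude bound $|U'(t)|=t|J_0(t)|$. On $[0,j_{0,1}]$, the bound $J_0\le1$ alone is too weak, so I would bracket the single root of $J_0=tJ_1$ in $(0,j_{0,1})$ (near $t\approx1.25$, where $tJ_0\approx0.80$) and, on $(j_{0,1},5]$, the root near $t\approx3.96$. There $tJ_0\approx-1.58$, well within $19/10$. Likewise $V''(t)=-\frac{\pi}{2}\bigl(Y_0(t)-tY_1(t)\bigr)$, and the extrema of $V'$ occur at solutions of $Y_0=tY_1$, near $t\approx2.2$ and near $t\approx5$. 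The value there is about $-\frac{\pi}{2}\cdot2.2\cdot0.51\approx-1.77$, and $|V'(5)|=\frac{\pi}{2}\cdot5|Y_0(5)|\approx2.5$, which lies below $27/10$.

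As in the proof of Lemma \ref{lem:euclidean-explicit-majorant}, each critical value will be certified by bracketing the root between two grid points where the derivative has opposite signs, using interval evaluation with outward rounding. The mean value theorem with a small derivative bound on the bracket then controls the function value at the unknown root. The main obstacle is the $V'$ bound near the right endpoint $t=5$: the margin is smallest there (about $0.2$), so the enclosure of $Y_0$ and $Y_1$ near $5$ must be tight. Every other value has a comfortable margin of at least $0.2$.
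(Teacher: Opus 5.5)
Your route differs from the paper's and can be made rigorous. However, the critical-point data you give for $U'$ and $V'$ is wrong, and you do not justify that your lists of critical points are complete.

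\textbf{Comparison with the paper.} The paper never locates an interior extremum. Since $U$ and $V$ both solve $F''-\frac1tF'+F=0$, the energy $\mathcal E=F^2+F'^2$ satisfies $\mathcal E'=2F'(F+F'')=\frac2tF'^2\ge0$. Hence $|F(t)|,|F'(t)|\le\sqrt{\mathcal E(5)}$ for $0<t\le5$, and the lemma reduces to four certified values $U(5),U'(5),V(5),V'(5)$. These give $\sqrt{\mathcal E_U(5)}\approx1.863$ and $\sqrt{\mathcal E_V(5)}\approx2.687$. Your endpoint-plus-critical-point scheme computes the actual maxima instead: $\max|U|\approx1.64$, $\max|U'|\approx1.59$, $\max|V|\approx2.50$, $\max|V'|\approx2.42$. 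That leaves more room than the paper's $V$-bound, whose margin is only about $0.013$. The cost is that you must locate and certify every zero of $J_0$, $Y_0$, $J_0-tJ_1$, $Y_0-tY_1$ in $(0,5)$, and prove none is missed. The energy identity avoids all of this.

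\textbf{Where the sketch fails as written.} Completeness of the root lists is exactly the weak point. Use $(J_0-tJ_1)'=-J_1-tJ_0$ and $(Y_0-tY_1)'=-Y_1-tY_0$, together with the sign patterns forced by $j_{0,1}<j_{1,1}<5<j_{0,2}$ and $y_{0,1}<y_{1,1}<y_{0,2}<5<y_{1,2}$.

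For $U''=J_0-tJ_1$: it is decreasing on $(0,j_{0,1})$, negative on $[j_{0,1},j_{1,1}]$, and increasing on $(j_{1,1},5]$. So its only zeros are near $1.26$ (where $U'\approx0.81$) and near $4.08$, not $3.96$ (where $U'\approx-1.59$).

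For $V''=-\frac{\pi}{2}(Y_0-tY_1)$: the function $Y_0-tY_1$ increases from $-\infty$ on $(0,y_{0,1})$, is positive on $[y_{0,1},y_{1,1}]$, decreases on $(y_{1,1},y_{0,2})$, and is negative on $[y_{0,2},5]$. Consequently:
\begin{itemize}
\item $V'$ has an interior maximum near $0.35$ (with $V'\approx0.39$), which you omit.
\item $V'$ has an interior minimum near $2.59$ with $V'\approx-1.97$, not near $2.2$ with value $-1.77$. At $t\approx2.2$ one has $Y_0-tY_1\approx0.52$, so there is no sign change to bracket.
\item There is no critical point near $5$, because $V''>0$ on $[y_{0,2},5]$.
\end{itemize}
So $\max|V'|$ is the endpoint value $V'(5)\approx2.42$, not $2.5$. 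The tightest margin in your scheme is therefore not $V'$ at $5$ but $|V(y_{0,2})|=-1/J_0(y_{0,2})\approx2.502$ (by the Wronskian), just under $0.2$.

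With these corrections, your four bounds follow. You also need the monotonicity argument above in place of bare grid bracketing, since sign checks on a grid cannot by themselves exclude pairs of zeros between grid points.
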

	
	\begin{proof}
		Clearly, both $U$ and $V$ solve the homogeneous equation $F''(t)-\frac{1}{t}F'(t)+F(t)=0$.
		For any solution $F$, let the  energy $\mathcal E (t)=F(t)^2+F'(t)^2$, then we have
		\begin{align}
			\mathcal E '(t)=2F'(t)\bigl(F(t)+F''(t)\bigr)=\frac{2}{t}F'(t)^2\ge0.
			\label{eq:UV-energy-monotone}
		\end{align}
		Thus $\mathcal E(t)$ is nondecreasing on $(0,\infty)$.
		It follows from \eqref{eq:UV-energy-monotone} that, for
		$0<t\le 5$,
		\begin{equation}\left\{ 
			\begin{aligned}\label{eq:U-V-estimate}
				|U(t)|, |U'(t)|\le \sqrt{U(5)^2+U'(5)^2}, \\
				|V(t)|, |V'(t)|\le \sqrt{V(5)^2+V'(5)^2}.
			\end{aligned}\right.
		\end{equation}

		Using the standard identities
		$\bigl(tJ_1(t)\bigr)'=tJ_0(t)$ and
		$\bigl(tY_1(t)\bigr)'=tY_0(t)$,
		we have $U'(t)=tJ_0(t)$ and $V'(t)=-\frac{\pi}{2}tY_0(t)$.
		A rigorous numerical evaluation gives
		\[
		-1.639<U(5)<-1.637,\qquad -0.889<U'(5)<-0.887,
		\]
		\[
		-1.162<V(5)<-1.160,\qquad 2.422<V'(5)<2.424.
		\]
		Consequently,
		$U(5)^2+U'(5)^2<1.639^2+0.889^2<1.9^2$ and
		$V(5)^2+V'(5)^2<1.162^2+2.424^2<2.7^2$.
		Then \eqref{eq:U-V-estimate} yields
		\begin{align}
			|U(t)|\le \frac{19}{10},\quad|U'(t)|\le \frac{19}{10},\quad|V(t)|\le \frac{27}{10},\quad|V'(t)|\le \frac{27}{10}.
		\end{align}
		This proves the lemma.
	\end{proof}
	
	To identify the appropriate homogeneous Bessel approximation of $G_\nu$ near the origin,
	we next determine its precise asymptotic expansion as $t\to0^+$.
	
	\begin{lemma}\label{lem:G-nu-asymptotic}
		Let $\nu>0$, $\mu=\nu(\nu+1)$, and 
		\begin{align}
			g(r,\nu)=\sin^2 r \mathsf{Q}_\nu'(\cos r),\quad G_\nu(t)=g\left(\frac{t}{\sqrt{\mu}},\nu\right).
		\end{align}
		Then, as $t\to0^+$,
		\begin{align}\label{eq:G-nu-asymptotic}
			G_\nu(t)=&1-\frac{t^2}{2}\ln t+t^2\left(\frac14\ln\mu+\frac14+\frac12\ln2-\frac{\gamma}{2}-\frac12\psi(\nu+1)\right)
			+O(t^4\ln t),
		\end{align}
		where $\gamma\approx 0.5772$ denotes the Euler-Mascheroni constant and
		$\psi(x)$ is  the logarithmic derivative of the Gamma function, also known as the digamma function (or psi function).
	\end{lemma}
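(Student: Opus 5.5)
The plan is to derive the expansion directly from the radial ODE by a Frobenius-type (logarithmic) ansatz. The additive constant is fixed by the known behaviour of the Ferrers function $\mathsf{Q}_\nu$ at $x=1^-$; the change of variable $t=\sqrt{\mu}\,r$ is done only at the very end. Throughout, $\nu>0$ is fixed, so all $O(\cdot)$ terms may depend on $\nu$.

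\emph{Step 1: the radial function.} Set $q(r)=\mathsf{Q}_\nu(\cos r)$. Then $q'(r)=-\sin r\,\mathsf{Q}_\nu'(\cos r)$, so
\begin{equation}
g(r,\nu)=-\sin r\,q'(r).
\end{equation}
Moreover $q$ solves the same radial equation as $j_\mu$, namely $q''+\cot r\,q'+\mu q=0$ on $(0,\pi)$, because the Legendre equation in $x=\cos r$ becomes this ODE. Since $r=0$ is a regular singular point with double indicial root $0$, the second solution has the form
\begin{equation}
q(r)=-\,p(r)\ln r+h(r).
\end{equation}
Here $p(r)=\mathsf{P}_\nu(\cos r)=j_\mu(r)=1-\tfrac{\mu}{4}r^2+O(r^4)$, and $h(r)=h_0+h_2r^2+O(r^4)$ is even and analytic. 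The coefficient of $\ln r$ is $-1$; this is the normalization $\Gamma=\tfrac1{2\pi}\mathsf{Q}_\nu(\cos r)\sim-\tfrac1{2\pi}\ln r$ already used in Proposition~\ref{thm:hot-spots-constant-sphere}.

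\emph{Step 2: the constants $h_0$ and $h_2$.} For $h_0$ I will invoke the standard expansion of the Ferrers function at $x\to1^-$ (DLMF \S14.8, or the hypergeometric representation of $\mathsf{Q}_\nu$ with its digamma terms):
\begin{equation}
\mathsf{Q}_\nu(x)=-\tfrac12\ln\tfrac{1-x}{2}-\gamma-\psi(\nu+1)+O\bigl((1-x)\ln(1-x)\bigr).
\end{equation}
Since $1-\cos r=\tfrac{r^2}{2}\bigl(1+O(r^2)\bigr)$, this gives $h_0=\ln 2-\gamma-\psi(\nu+1)$. The coefficient $h_2$ then follows by substituting the ansatz into $q''+\cot r\,q'+\mu q=0$, using $\cot r=\tfrac1r-\tfrac r3+O(r^3)$, and collecting the $r^0$ terms that carry no logarithm. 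The result is a linear relation of the form
\begin{equation}
4h_2+\mu h_0+(\text{explicit terms from }p\text{ and }\cot r)=0,
\end{equation}
which expresses $h_2$ through $\mu$ and $h_0$.

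\emph{Step 3: expand $g$ and rescale.} Differentiating the ansatz gives
\begin{equation}
q'=\tfrac{\mu}{2}r\ln r-\tfrac1r+\bigl(\tfrac{\mu}{4}+2h_2\bigr)r+O(r^3\ln r).
\end{equation}
Multiplying by $-\sin r=-r+\tfrac{r^3}{6}+O(r^5)$ yields
\begin{equation}
g=1-\tfrac{\mu}{2}r^2\ln r-\Bigl(\tfrac16+\tfrac{\mu}{4}+2h_2\Bigr)r^2+O(r^4\ln r).
\end{equation}
Now put $r=t/\sqrt{\mu}$, so $\ln r=\ln t-\tfrac12\ln\mu$. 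The logarithmic term becomes $-\tfrac{t^2}{2}\ln t+\tfrac{t^2}{4}\ln\mu$, which produces the $\tfrac14\ln\mu$ in \eqref{eq:G-nu-asymptotic}. After inserting $h_2$ from Step~2, the remaining $t^2$ coefficient $-\tfrac1\mu\bigl(\tfrac16+\tfrac\mu4+2h_2\bigr)$ should simplify to $\tfrac14+\tfrac12\ln2-\tfrac\gamma2-\tfrac12\psi(\nu+1)$. The remainder is $O(r^4\ln r)=O(t^4\ln t)$ for fixed $\nu$.

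\emph{Main obstacle.} The delicate point is the bookkeeping of the non-logarithmic $r^2$ coefficient. It collects contributions from the $-r/3$ term of $\cot r$, the $r^3/6$ term of $\sin r$, the derivative of $p\ln r$, and the $h_0$-dependence of $h_2$. These must cancel so that the $\mu^{-1}$ terms disappear and only the $\psi(\nu+1)$ and $\ln 2$ terms survive in the stated form. I will double-check the result against the exactly solvable case $\nu=1$, where $\mathsf{Q}_1(x)=\tfrac x2\ln\tfrac{1+x}{1-x}-1$ and $g$ can be expanded by hand.
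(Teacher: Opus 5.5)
Your proposal is correct, but it takes a different route from the paper. The bookkeeping you flag as the main obstacle does close. Since $p$ solves the ODE, one gets $L(-p\ln r)=-2p'/r+p/r^2-\cot r\,p/r=\mu+\tfrac13+O(r^2)$ and $Lh=4h_2+\mu h_0+O(r^2)$, hence $4h_2=-\mu h_0-\mu-\tfrac13$. Then $\tfrac16+\tfrac\mu4+2h_2=-\tfrac\mu4-\tfrac{\mu h_0}{2}$: the $\tfrac16$ from $\sin r$ cancels the $\tfrac13$ from $\cot r$. The rescaled $t^2$ coefficient is therefore exactly $\tfrac14+\tfrac{h_0}{2}$, with $h_0=\ln2-\gamma-\psi(\nu+1)$. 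Your $\nu=1$ check also confirms this, since $g=\sin^2 r\,\ln\cot\tfrac r2+\cos r=1-r^2\ln r+(\ln2-\tfrac12)r^2+O(r^4\ln r)$.

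The paper avoids the Frobenius ansatz entirely. It writes the radial equation for $F_\nu(r)=\mathsf Q_\nu(\cos r)$ in divergence form, $(\sin r\,F_\nu')'=-\mu\sin r\,F_\nu$. It then shows $g=-\sin r\,F_\nu'\to1$ and integrates once to get $g(r,\nu)=1+\mu\int_0^r\sin s\,F_\nu(s)\,ds$. After that, only the leading expansion $F_\nu=-\ln r+C_\nu+O(r^2\ln r)$ is needed, with $C_\nu=h_0$. The coefficient $h_2$, the structure theorem for the second Frobenius solution, and the curvature corrections never enter, because $\sin s=s+O(s^3)$ contributes only at order $r^4\ln r$ inside the integral. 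This first integral $g'=\mu\sin r\,F_\nu$ is exactly why your $\tfrac16$/$\tfrac13$ cancellation occurs, and it would shortcut your Step 2.

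Your route costs extra input: the regular-singular-point theory giving $q=-p\ln r+h$ with $h$ even and analytic, plus a second-order coefficient computation. In exchange it gives the full structure of $G_\nu$ (an analytic part plus $\ln t$ times an analytic part). That makes the form of the remainder immediate and would let you extract higher-order terms if needed. The paper's route is shorter and makes transparent that only the constant term $C_\nu$ of $\mathsf Q_\nu(\cos r)$ matters at order $t^2$.
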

	
	\begin{proof}
		Let $F_\nu(r)=\mathsf{Q}_\nu(\cos r)$. 
		Since $\mathsf{Q}_\nu$ satisfies the Legendre equation, $F_\nu$ satisfies
		\begin{equation}\label{eq:F-nu-radial-equation}
			F_\nu''(r)+\cot r\,F_\nu'(r)+\mu F_\nu(r)=0.
		\end{equation}
		The standard expansion for $\mathsf{Q}_\nu$ as $x\to 1^-$ (\cite[(14.8.3)]{DLMF}) reads
		\begin{align}
			\mathsf{Q}_\nu(x)=\frac12\ln\left(\frac{2}{1-x}\right)-\gamma -\psi(\nu+1)+O\big((x-1)\ln(1-x)\big),
		\end{align}
		together with $1-\cos r=\frac{r^2}{2}+O(r^4)$, gives
		\begin{equation}\label{eq:F-nu-leading-asymptotic}
			F_\nu(r)=-\ln r+C_\nu+O(r^2\ln r),\qquad
			C_\nu=\ln 2-\gamma-\psi(\nu+1).
		\end{equation}
		Multiplying \eqref{eq:F-nu-radial-equation} by $\sin r$ gives
		\begin{equation}\label{eq:F-nu-divergence}
			\bigl(\sin r F_\nu'(r)\bigr)'=-\mu\sin r F_\nu(r).
		\end{equation}
		By \eqref{eq:F-nu-leading-asymptotic}, the right-hand side in \eqref{eq:F-nu-divergence} is
		$O(r|\ln r|)$ near $r=0$. Hence
		$\sin r F_\nu'(r)$ has a finite limit as $r\to0^+$.
		Let this limit be $\ell$. Integrating \eqref{eq:F-nu-divergence} near zero yields
		\begin{align}
			\sin r F_\nu'(r)= \ell+O(r^2|\ln r|),
		\end{align}
		so $F_\nu'(r)=\frac{\ell}{r}+O(r|\ln r|)$, and therefore $F_\nu(r)=\ell\ln r+O(1)$.
		Comparing this with \eqref{eq:F-nu-leading-asymptotic} gives
		$\ell=-1$. Thus, by using the identity $g(r,\nu)=-\sin r F_\nu'(r)$, we have
		\begin{align}
			\lim_{r\to0^+}g(r,\nu)= \lim_{r\to0^+} -\sin r F_\nu'(r)=1.
		\end{align}
		Moreover, \eqref{eq:F-nu-divergence} implies $g'(r,\nu)=\mu\sin r\,F_\nu(r)$, 
		and hence
		\begin{equation}\label{eq:g-integral-representation}
			g(r,\nu)
			=
			1+\mu\int_0^r\sin s F_\nu(s) ds.
		\end{equation}
		Using $\sin s=s+O(s^3)$,  together with \eqref{eq:F-nu-leading-asymptotic}, we have
		\begin{align}
			\int_0^r\sin s F_\nu(s)ds=-\frac{r^2}{2}\ln r+\left(\frac14+\frac{C_\nu}{2}\right)r^2+O(r^4|\ln r|).
		\end{align}
		Substitution into \eqref{eq:g-integral-representation} gives
		\begin{equation}\label{eq:g-asym-exp}
			g(r,\nu)=1-\frac{\mu}{2}r^2\ln r+\mu r^2\left(\frac14+\frac12C_\nu\right)+O(r^4|\ln r|).
		\end{equation}
		Finally, substituting $r=t/\sqrt{\mu}$ into \eqref{eq:g-asym-exp}, we obtain
		\begin{align*}
			G_\nu(t)=1-\frac{t^2}{2}\ln t+
			t^2\left(\frac14\ln\mu+\frac14+\frac12\ln2-\frac{\gamma}{2}-\frac12\psi(\nu+1)\right)
			+O(t^4|\ln t|),
		\end{align*}
		This proves the lemma.
	\end{proof}

	Combining the preceding uniform estimates, and the Green representation formula,
	we now obtain an explicit universal upper bound for the spherical case.
	
	\begin{theorem}\label{thm:sphere-universal-const-4}
		Let $\Omega\subset \Sph$ be a bounded convex domain contained in a hemisphere, then
		\begin{equation}\label{eq:sphere-explicit-universal-constant}
			\mathfrak C(\Omega) < 4.
		\end{equation}
	\end{theorem}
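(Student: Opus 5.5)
The plan is to start from Proposition~\ref{thm:hot-spots-constant-sphere}. It suffices to exhibit, for every admissible pair $(\mu,D)$, a constant $c$ with
\[
\sup_{0<r<D}\bigl|g(r,\nu)+c\sin^2 r\,\mathsf P_\nu'(\cos r)\bigr|<4,
\qquad g(r,\nu)=\sin^2 r\,\mathsf Q_\nu'(\cos r).
\]
Here $D\le\pi$, since $\Omega$ lies in a hemisphere, and Lemma~\ref{lem:Sph-mu-estimate} gives $\mu D^2\le 4j_{0,1}^2-D^2/3<4j_{0,1}^2$. In the variable $t=\sqrt\mu\,r$ we therefore only need to control $t\in(0,2j_{0,1})\subset(0,5)$. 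I will split according to the size of $\mu$ (equivalently $\nu$), as announced in the outline.

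\textbf{Case 1: small $\mu$ ($\mu\le 2$).} By Remark~\ref{rem:spherical-low-mu-recovery}, $\tau_\mu=\pi$, so $u$ has no interior critical point. The maximum of $|u|$ is then attained on $\partial\Omega$, and $\mathfrak C(\Omega)=1$. (By the Lichnerowicz-type bound in Remark~\ref{rem:spherical-lune-example}, this case is essentially the endpoint $\mu=2$.)

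\textbf{Case 2: intermediate $\mu$, $2<\mu\le\mu_0$, for a threshold $\mu_0$ to be fixed.} Here $g'(r)=\mu\sin r\,\mathsf Q_\nu(\cos r)$, and $h(r)=\sin^2 r\,\mathsf P_\nu'(\cos r)=-\sin r\,j_\mu'(r)$ satisfies an analogous first-order relation. Both $g$ and $h$ solve the second-order ODE $y''-\cot r\,y'+\mu y=0$. I will use an energy $\mathcal E=y^2+y'^2/\mu$ (possibly weighted) in the spirit of Lemma~\ref{lem:U-V-estimate}. Since $\mathcal E'$ has a sign-controlled term from $-\cot r$, this should give a bound
\[
\sup_{r<D}|g+ch|^2\le A(\mu)+2B(\mu)c+C(\mu)c^2,
\]
with $A,B,C$ determined by explicit values at one point, e.g.\ $r=\min(D,\pi/2)$ or the endpoint $t=5$. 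Minimizing this quadratic in $c$ gives $A-B^2/C$, and I will check numerically, with certified bounds on Ferrers functions at finitely many $\nu$ plus monotonicity in $\nu$, that this is below $16$ on $(2,\mu_0]$.

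\textbf{Case 3: large $\mu$, $\mu>\mu_0$.} Set $G_\nu(t)=g(t/\sqrt\mu,\nu)$. Rewriting the ODE in $t$ gives
\[
G''-\tfrac1tG'+G=f(t),\qquad f(t)=\frac{1}{\sqrt\mu}\Bigl(\cot\tfrac{t}{\sqrt\mu}-\tfrac{\sqrt\mu}{t}\Bigr)G'(t).
\]
Since $\cot s-1/s=O(s)$, we get $f=O(t\,|G'|/\mu)$, which is small. By Lemma~\ref{lem:G-nu-asymptotic}, $G_\nu$ matches $H_\nu=V+c_\nu U$ up to $o(t^2)$ with $V=-\tfrac\pi2 tY_1(t)$. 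The coefficient of $t^2\ln t$ gives $a=1$, because $V=1-\tfrac{t^2}{2}\ln t+\dots$. The constant $c_\nu$ is read off from the $t^2$ coefficient involving $\tfrac14\ln\mu-\tfrac12\psi(\nu+1)$, which stays bounded as $\nu\to\infty$. Lemma~\ref{lem:bessel-apro} then gives
\[
G_\nu=H_\nu+\int_0^tK(t,s)f(s)\,ds.
\]
Choosing $c=$ the appropriate multiple of $-c_\nu$ cancels the $U$-part using $h$, which is, after rescaling, $U$ plus a similar perturbation. By Lemma~\ref{lem:U-V-estimate}, $|V|\le 2.7$ on $(0,5]$, and $|K(t,s)|\le(|U(t)V(s)|+|V(t)U(s)|)/s$, with $|U(s)|/s$ bounded. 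Together with a bootstrap bound on $|G'|$ (same Green formula differentiated, using $|V'|,|U'|\le2.7$), this shows the integral term is $O(1/\mu)$ times explicit constants. Picking $\mu_0$ large enough makes the total below $4$.

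The main obstacle is closing the large-$\mu$ perturbation estimate with explicit constants. Specifically, I need to control $\int_0^t|K(t,s)||f(s)|\,ds$ uniformly for $t\le 5$, including the logarithmic behavior near $0$, and to handle the mismatch that $h$ is only approximately $U$. The threshold $\mu_0$ from this step must be small enough that the certified numerics of Case 2 remain feasible. I would first try a crude Gronwall bootstrap with $|\cot s-1/s|\le s/2$ on $(0,\pi/2]$ and see what $\mu_0$ results.
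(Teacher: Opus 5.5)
Your plan is correct and is essentially the paper's proof. The regimes are the same: $\mu\le 2$ via Remark~\ref{rem:spherical-low-mu-recovery}; then, for $1\le\nu\le 8$, the energy $h_c^2+h_c'^2/\mu$ evaluated at $r=\pi/2$ and minimized in $c$; and for $\nu\ge 8$, the Bessel perturbation with Green kernel and Gronwall. The paper needs no certified numerics in the middle regime: the Legendre Wronskian reduces the minimum to $\mu/(\mathsf P_\nu'(0)^2+\mu\,\mathsf P_\nu(0)^2)$, and the Gamma-function values of $\mathsf P_\nu(0)$, $\mathsf P_{\nu-1}(0)$ together with Gautschi's inequality give $\mathfrak C(\Omega)^2<\frac{\pi}{2}(\nu+2)\le 5\pi$. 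In the large regime it simply takes $c=0$, since $|c_\nu|<\frac{1}{2\nu}\le\frac1{16}$, so no cancellation via $h$ is needed. One correction to your bookkeeping: at $r=\pi/2$ the optimized energy grows linearly in $\nu$. It is this growth, not the cost of numerics, that caps the intermediate range, and with the paper's constants both bounds sit just below $4$ at the switch point $\nu=8$ (namely $\sqrt{5\pi}\approx 3.96$ and $3.98$).
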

	
	\begin{proof}
		Let $D = \diam_{\Sph}(\Omega)$, $\mu = \mu_2(\Omega)$ and $\nu>0$ be determined by $\nu(\nu+1)=\mu$.
		If $\mu\le 2$, then {\color{blue}Remark \ref{rem:spherical-low-mu-recovery}}
		implies that the second Neumann eigenfunction has no interior critical
		point, and $\mathfrak{C}(\Omega)=1$ is trivial. Therefore we may assume that $\mu\ge2$.
		We divide the proof into two ranges of the degree $\nu$.

		\noindent
		\textbf{Case 1: $1\le\nu\le 8$.}

		For $c\in\mathbb R$, let $R_c(x)=\mathsf Q_\nu(x)+c\mathsf P_\nu(x)$, and $h_c(r)=\sin^2r\,R_c'(\cos r)$.
		By Proposition~\ref{thm:hot-spots-constant-sphere},
		\begin{equation}\label{eq:sphere-low-by-hc}
			\mathfrak C(\Omega)\le \inf_{c\in\mathbb R}\sup_{0<r<D}|h_c(r)|.
		\end{equation}
		Using the Legendre differential equation, a direct computation shows that $h_c(r)$ satisfies
		\begin{equation}\label{eq:sphere-hc-ode}
			h_c''-\cot r h_c'+\mu h_c=0.
		\end{equation}
		Let the energy $\mathcal E_c(r)=h_c(r)^2+\frac{h_c'(r)^2}{\mu}$, then we have
		\begin{equation}\label{eq:sphere-energy-derivative}
			\mathcal E_c'(r)=\frac{2\cot r}{\mu} h_c'(r)^2.
		\end{equation}
		Thus $\mathcal E_c$ is increasing on $(0,\pi/2)$ and decreasing on
		$(\pi/2,\pi)$, therefore we obtain
		\begin{equation}\label{eq:sphere-energy-midpoint}
			\sup_{0<r<D}|h_c(r)|^2\le \mathcal E_c \left(\frac\pi2\right).
		\end{equation}
		At $r_0=\pi/2$, it is clear that $h_c(r_0)=R_c'(0)$.
		Using the fact that $R_c$ satisfies the Legendre equation, a straightforward computation gives
		$h_c'(r_0)=\mu R_c(0)$. Therefore
		\begin{align}
			\mathcal E_c\left(\frac\pi2\right)
			=\bigl(\mathsf Q_\nu'(0)+c\mathsf P_\nu'(0)\bigr)^2
			+\mu\bigl(\mathsf Q_\nu(0)+c\mathsf P_\nu(0)\bigr)^2.
		\end{align}
		Minimizing this quadratic expression in $c$, we have
		\begin{equation}\label{eq:sphere-energy-minimum}
			\inf_{c\in\mathbb R}\mathcal E_c \left(\frac\pi2\right)
			=\frac{\mu \big(\mathsf P_\nu(0)\mathsf Q_\nu'(0)-\mathsf P_\nu'(0)\mathsf Q_\nu(0) \big)^2 }{\mathsf P'_\nu(0)^2+\mu \mathsf P_\nu(0)^2}
			=\frac{\mu}{\mathsf P_\nu'(0)^2+\mu\mathsf P_\nu(0)^2}.
		\end{equation}
		where the last equality follows from the Legendre Wronskian identity
		$\mathsf P_\nu(0)\mathsf Q_\nu'(0)-\mathsf P_\nu'(0)\mathsf Q_\nu(0)=1$ (\cite[(14.2.4)]{DLMF}).
		Using $\mathsf P_\nu'(0)=\nu\mathsf P_{\nu-1}(0)$ (\cite[14.10.5]{DLMF}), we obtain
		\begin{equation}\label{eq:sphere-low-degree-reduction}
			\mathfrak C(\Omega)^2 
			\le
			\frac{\nu+1}{\nu\mathsf P_{\nu-1}(0)^2+(\nu+1)\mathsf P_\nu(0)^2}.
		\end{equation}

		We now use the standard value (\cite[14.5.1]{DLMF})
		\begin{align}  \label{eq:P-0-exp}
			\mathsf P_\nu(0)
			=\frac{\sqrt\pi}{\Gamma\left(\frac{1}{2}-\frac{\nu}{2}\right)\Gamma\left(1+\frac\nu2\right)}
			=\frac{\sqrt\pi}{\Gamma\left(\frac{1}{2}-\xi\right)\Gamma\left(1+\xi \right)}
			=\frac{\sqrt\pi}{\Gamma\left(\frac{1}{2}-\xi\right)\Gamma\left(\xi \right)\xi}.
		\end{align}
		where $\xi=\frac \nu 2$.
		The Gamma reflection formula $\Gamma(x)\Gamma(1-x)=\frac{\pi}{\sin(\pi x)}$ (\cite[5.5.3]{DLMF})
		gives $\Gamma(\frac 12 -\xi )\Gamma(\frac 12 +\xi )=\frac{\pi }{\cos(\pi\xi)}$,
		substituting this into \eqref{eq:P-0-exp} gives
		\begin{align}
			\mathsf P_\nu(0)=\frac{R(\xi)}{\sqrt\pi \xi}\cos(\frac{\pi\nu}{2}),
		\end{align}
		where $R(\xi)=\frac{\Gamma(\xi+\frac12)}{\Gamma(\xi)}$.
		Similarly, we have $$\mathsf P_{\nu-1}(0)=\frac{1}{\sqrt\pi R(\xi)}\sin(\frac{\pi\nu}{2}).$$
		By using Gautschi's inequality (\cite[(5.6.4)]{DLMF}) with $s=1/2$ and $\Gamma(1+\xi)=\xi\Gamma(\xi)$, we have 
		$\xi(\xi+1)^{-\frac 12}<R(\xi)<\sqrt{\xi}$.
		It follows that
		\begin{align}
			\nu\mathsf P_{\nu-1}(0)^2+(\nu+1)\mathsf P_\nu(0)^2 &=\frac{\nu}{R^2(\xi)}\frac{1}{\pi}\sin^2(\frac{\pi\nu}{2})
			+(\nu+1)\frac{R^2(\xi)}{\xi^2}\frac{1}{\pi}\cos^2(\frac{\pi\nu}{2}) \\
			&> \frac{2}{\pi} \sin^2(\frac{\pi\nu}{2})+(\nu+1)\frac{1}{\xi+1}\frac{1}{\pi}\cos^2(\frac{\pi\nu}{2}) \\
			&> 
			\frac{2(\nu+1)}{\pi(\nu+2)}.
		\end{align}
		Combining this with \eqref{eq:sphere-low-degree-reduction}, we have
		\begin{equation}\label{eq:sphere-low-degree-final}
			\mathfrak C(\Omega)^2<\frac\pi2(\nu+2)
			\le5\pi,
		\end{equation}
		and hence
		\[
		\mathfrak C(\Omega)<\sqrt{5\pi}<4.
		\]

		\noindent
		\textbf{Case 2: $\nu\ge 8$.}

        By Proposition \ref{thm:hot-spots-constant-sphere}, taking $c=0$ gives
		\begin{equation}\label{eq:sphere-C-by-g}
			\mathfrak C(\Omega)
			\le
			\sup_{0<r<D}
			\left|\mathsf{Q}_\nu'(\cos r)\right|\sin^2 r .
		\end{equation}
		Let $t=\sqrt{\mu}r$, $g(r, \nu)=\sin^2 r \mathsf{Q}_\nu'(\cos r)$ and
		$G_\nu(t)= g\left(\frac{t}{\sqrt{\mu}},\nu\right)$,
		Lemma \ref{lem:Sph-mu-estimate} yields  $0<t\le 2j_{0,1}<5$.
		Using the Legendre differential equation, a direct computation shows that $G_\nu(t)$ satisfies
		\begin{equation}\label{eq:G-scaled-equation}
			G_\nu''(t)
			-
			\frac{1}{\sqrt{\mu}}
			\cot\left(\frac{t}{\sqrt{\mu}}\right)
			G_\nu'(t)
			+
			G_\nu(t)
			=0.
		\end{equation}
		Equivalently,
		\begin{equation}\label{eq:G-bessel-perturbation}
			G_\nu''-\frac1tG_\nu'+G_\nu
			=
			-\delta_\mu(t)G_\nu',
		\end{equation}
		where
		\begin{equation}\label{eq:def-delta-mu}
			\delta_\mu(t)
			=\frac1t-\frac1{\sqrt{\mu}}\cot\left(\frac{t}{\sqrt{\mu}}\right).
		\end{equation}
		For $0<t\le 2j_{0,1}$, using the elementary inequalities
		$\frac1x-\frac{x}{2}\le\cot x\le\frac1x$ when $x<2$ , we have
		\begin{equation}\label{eq:delta-bound}
			0\le\delta_\mu(t)\le\frac{t}{2\mu}.
		\end{equation}

		Now we consider the two function $U(t)=tJ_1(t)$ and $V(t)=-\frac{\pi}{2}tY_1(t)$, 
		they are two solutions of the homogeneous equation $H''-\frac1t H'+H=0$.
		By using the standard asymptotic expansions for $J_1$ at the origin (\cite[(10.2.2)]{DLMF}), we have
		\[
		U(t)=tJ_1(t)=\frac{t^2}{2}+O(t^4),
		\]
		and the asymptotic expansions for $Y_1$ at the origin (\cite[(10.8.1)]{DLMF}) give
		\[ V(t)=-\frac{\pi}{2}tY_1(t)=1-\frac{t^2}{2}\ln t+t^2\left(\frac12\ln2-\frac{\gamma}{2}+\frac14\right)+O(t^4\ln t). \]
		We choose $c_\nu=\frac12\ln\mu-\psi(\nu+1)$,
		then Lemma \ref{lem:G-nu-asymptotic} implies that $H_\nu(t)=V(t)+c_\nu U(t)$
		has the same $t^2$-order behavior as $G_\nu(t)$.
		By the logarithmic bounds for the digamma function
		$\ln x\le \psi(x+1) \le \ln(x+1)$ for $x>0$(\cite[(1.3)]{Guo14Ana}), we have
		\begin{align}\label{eq:v-nu-estimate}
			|c_\nu|<\frac12\ln\left(1+\frac1\nu\right)<\frac1{2\nu}\le \frac{1}{16}.
		\end{align}

		Since  $H_\nu(t)$ has the same $t^2$-order behavior as $G_\nu(t)$, \eqref{eq:delta-bound} and \eqref{eq:G-nu-asymptotic}
		imply that $\delta_\mu(s)G_\nu'(s)=O(s^\alpha)$ for every $0<\alpha<2$. Then
		Lemma \ref{lem:bessel-apro} gives
		\begin{equation}\label{eq:G-variation}
			G_\nu(t)=H_\nu(t)-\int_0^t K(t,s)\delta_\mu(s)G_\nu'(s)\,ds,
		\end{equation}
		where $K(t,s)=\big(U(t)V(s)-V(t)U(s)\big)s^{-1}$ is the corresponding Green kernel. 
		Hence, Lemma \ref{lem:U-V-estimate} gives
		\begin{align}
			|K(t,s)|&\le\frac{|U(t)||V(s)|+|V(t)||U(s)|}{s}\le\frac{2(19/10)(27/10)}{s}= \frac{513}{50s}, \label{eq:K-est}\\
			|\partial_tK(t,s)|&\le\frac{|U'(t)||V(s)|+|V'(t)||U(s)|}{s}\le\frac{2(19/10)(27/10)}{s}= \frac{513}{50s}. \label{eq:K-t-est}
		\end{align}
		Lemma \ref{lem:U-V-estimate} and \eqref{eq:v-nu-estimate} give
		\begin{align}
			|H_\nu(t)|&\le|V(t)|+|c_\nu||U(t)|\le\frac{27}{10}+\frac1{16}\frac{19}{10}=\frac{451}{160},\label{eq:H-est}\\
			|H_\nu'(t)|&\le|V'(t)|+|c_\nu||U'(t)|\le\frac{27}{10}+\frac1{16}\frac{19}{10}=\frac{451}{160}. \label{eq:H-nu-est}
		\end{align}
		Then
		differentiating \eqref{eq:G-variation} and using \eqref{eq:H-nu-est}, \eqref{eq:K-t-est}, \eqref{eq:delta-bound}, yields
		\[
		|G_\nu'(t)|\le \frac{451}{160} +\frac{513}{100\mu}\int_0^t|G_\nu'(s)| ds.
		\]
		Thus, by Gronwall's inequality, we have
		\begin{equation}\label{eq:G-Gronwall}
			\begin{aligned}
				|G_\nu'(t)|\le\frac{451}{160}
				\exp\!\left(\frac{513t}{100\mu}\right),
				\qquad 0\le t\le2j_{0,1}.
			\end{aligned}
		\end{equation}
		Finally, substituting \eqref{eq:G-Gronwall} into \eqref{eq:G-variation}, 
		together with \eqref{eq:H-est}, \eqref{eq:K-est} and \eqref{eq:delta-bound}, we have
		\begin{align}
			|G_\nu(t)|
			&\le\frac{451}{160}+\frac{513}{100\mu}\int_0^t|G_\nu'(s)| ds \\
			&\le\frac{451}{160}+\frac{513}{100\mu}\int_0^t\frac{451}{160}\exp\left(\frac{513s}{100\mu}\right) ds
			=\frac{451}{160}\exp\left(\frac{513t}{100\mu}\right).
		\end{align}
		Since $\mu\ge 72$ and $t\le2j_{0,1}$, we conclude that
		\begin{align}
			|G_\nu(t)|\le\frac{451}{160}
			\exp\left(\frac{1026j_{0,1}}{7200}\right)<3.98 < 4.
		\end{align}
		Taking $c=0$ in Proposition \ref{thm:hot-spots-constant-sphere} now gives
		$\mathfrak C(\Omega)<4$.

		Combining the two cases, we conclude that
		\[	\mathfrak C(\Omega) < 4.\]
		This completes the proof.
	\end{proof}
	
	\begin{remark}\label{rem:sphere-numerical}
		To illustrate the possible magnitude of the optimal bound, we numerically evaluate
		\[ g(r,\nu)=\sin^2 r \mathsf{Q}_\nu'(\cos r) \]
		over the nontrivial parameter regime 
		$\mathcal{S}=\{(\nu,r,D)|   0\le r\le D\le\pi, \mu\ge 2,\mu D^2 \ge j_{1,1}^2 
		\text{ and }  \mu D^2+D^2/3\le 4j_{0,1}^2\}$. For $\nu\in[1,10]$,
		the numerical search yields
		$$\sup_{\mathcal{S}\cap \{\nu\le 10\}} |g(r,\nu)|\approx 2.47,$$
		with the maximum attained near $\nu\approx 10$ and $r\approx 0.376$. For $\nu\in[1,40]$,
		the numerical search yields the maximum approximately $2.5$. 
		Further numerical computations for larger values of $\nu$ suggest that this supremum approaches approximately $2.502$.

		In fact, as $\nu\to \infty$, then $D\to 0$, and therefore $\frac{r}{\sin r}\sim 1$.
		By using the asymptotic formula (\cite[(14.15.12)]{DLMF})
		\begin{align}
			\mathsf{Q}_\nu(\cos r)\sim -\frac{\pi}{2} \sqrt{\frac{r}{\sin r}}Y_0\big((\nu+\frac{1}{2})r\big)
			\sim -\frac{\pi}{2}Y_0\big((\nu+\frac{1}{2})r\big)  \quad \text{ as }\nu\to+\infty.
		\end{align}
		For large $\nu$, we have
		\begin{align}
			g(r,\nu) = -\sin(r)\frac{d}{dr}\mathsf{Q}_\nu(\cos r) 
			\sim \frac{\pi}{2} (\sin r) Y_1\big((\nu+\frac{1}{2})r\big)(\nu+\frac{1}{2}) 
			\sim \frac{\pi}{2} Y_1(z)z.
		\end{align}	
		where $z=(\nu+\frac{1}{2})r \approx \sqrt{\mu}r \le 2j_{0,1}$ by Lemma \ref{lem:Sph-mu-estimate}.
		A straightforward analysis of the extrema of $Y_1(z)z$ on $[0,2j_{0,1}]$
		shows that the maximum value of $ \left|\frac{\pi}{2}Y_1(z)z\right|$ is $-J_0(y_{0,2})^{-1}\approx 2.502$.
		Thus these numerical and asymptotic observations suggest that the bound $4$ in
		Theorem \ref{thm:sphere-universal-const-4} can likely be improved further.
		
	\end{remark}

	\subsection{Hot spots constant in $\Hyp$}
	
	Before proceeding, we establish strictly real-valued radial solutions. Let $\nu$ be the parameter satisfying $\nu(\nu+1)=-\mu$. 
	If $\mu \le \frac{1}{4}$, then $\nu$ is a real constant,
	and both Legendre functions ${P}_\nu(x)$ and ${Q}_\nu(x)$ are real-valued on $(1,\infty)$.
	However, if $\mu > \frac{1}{4}$, then $\nu$ becomes complex, given by $\nu = -\frac{1}{2} + i\sqrt{\mu - \frac{1}{4}}$.
	In this case, ${P}_\nu(x)$ remains real-valued for $x \in (1,\infty)$,
	but ${Q}_\nu(x)$ is generally complex-valued for $x \in (1,\infty)$.
	To resolve this, we define the real-valued functions
	\begin{equation}\label{eq:real-conical-solutions}
		\mathcal{P}_\nu(x) := {P}_\nu(x), \qquad \mathcal{Q}_\nu(x) := \operatorname{Re} \{ {Q}_\nu(x) \}.
	\end{equation}
	
	By this definition, both $\mathcal{P}_\nu(x)$ and $\mathcal{Q}_\nu(x)$ are strictly real-valued on $(1,\infty)$.
	Moreover, substituting $x = \cosh r$ where $r = \dist_{\Hyp}(\cdot, p)$,
	the function $\Gamma = \frac{1}{2\pi}\mathcal{Q}_\nu(\cosh r)$ satisfies
	the singular equation $\Delta\Gamma + \mu\Gamma = -\delta_p$ in $\Hyp$,
	while $\mathcal{P}_\nu(\cosh r)$ serves as the regular solution satisfying $\Delta\Phi + \mu\Phi = 0$.
	We shall exclusively use these real-valued solutions throughout this subsection.

	\begin{proposition}\label{thm:hot-spots-constant-hyp}
		Let $\Omega\subset\Hyp$ be a bounded convex domain with diameter $D=\diam_{\Hyp}(\Omega)$ and $\mu=\mu_2(\Omega)$. Let $\nu$ be the complex constant that satisfies $\nu(\nu+1)=-\mu$. Then
		\begin{equation}\label{eq:hot-spots-constant-hyp}
			\mathfrak{C}(\Omega)\le\inf_{c\in\R}\sup_{r\in(0,D)}\Bigl|\bigl(\mathcal Q_\nu'(\cosh r)+c\mathcal P_\nu'(\cosh r)\bigr)\Bigr|\sinh^2r.
		\end{equation}
	\end{proposition}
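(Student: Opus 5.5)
The plan is to transplant the proof of Proposition \ref{thm:hot-spots-constant-sphere} to $\Hyp$, with $\sin r$ replaced by $\sinh r$ and the Ferrers functions replaced by the real-valued solutions $\mathcal P_\nu,\mathcal Q_\nu$ from \eqref{eq:real-conical-solutions}.

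\textbf{Setup.} Fix a second Neumann eigenfunction $u$. If $\max|u|$ is attained on $\partial\Omega$ there is nothing to prove. Otherwise, after replacing $u$ by $-u$, we may take $p\in\Omega$ with $M=u(p)=\max|u|>0$, and set $m=\max_{\partial\Omega}|u|$. Let $\Gamma=\frac1{2\pi}\mathcal Q_\nu(\cosh r)$ and $\Phi=\mathcal P_\nu(\cosh r)$, where $r=d_{\Hyp}(\cdot,p)$.

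\textbf{Singularity of $\Gamma$.} First I will check that $\Gamma$ has the correct singularity, $\Gamma\sim-\frac1{2\pi}\ln r$ as $r\to0$.
\begin{itemize}
\item For real $\nu$ this is the standard expansion $Q_\nu(x)=\frac12\ln\frac{2}{x-1}-\gamma-\psi(\nu+1)+o(1)$ as $x\to1^+$.
\item For $\nu=-\frac12+i\rho$ the same expansion holds with a complex constant. Taking real parts preserves the logarithmic term with coefficient $\frac12$, since $\ln\frac{2}{x-1}$ is real.
\item Since the radial Legendre operator has real coefficients, $\operatorname{Re}Q_\nu(\cosh r)$ still solves the radial ODE $F''+\coth r\,F'+\mu F=0$.
\end{itemize}
Hence $\Delta\Gamma+\mu\Gamma=-\delta_p$ holds in the distributional sense on $\Hyp$.

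\textbf{Green's identity.} Apply Green's identity to $(u,\Gamma)$ on $\Omega\setminus\mathcal B_\varepsilon(p)$ and let $\varepsilon\to0$.
\begin{itemize}
\item On the small circle, $\partial_r\Gamma\sim-\frac1{2\pi r}$ and the length is $2\pi\sinh\varepsilon$, so that term tends to $u(p)$.
\item The term $\Gamma\,\partial_n u$ on the small circle is $O(\varepsilon\ln\varepsilon)\to0$.
\item On $\partial\Omega$, the Neumann condition kills $\Gamma\,\partial_n u$.
\end{itemize}
This yields $M=-\int_{\partial\Omega}u\,\partial_n\Gamma\,ds$, where $\partial_n\Gamma=\frac1{2\pi}\mathcal Q_\nu'(\cosh r)\sinh r\,\partial_n r$.

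\textbf{Change to polar coordinates.} In geodesic polar coordinates the metric is $dr^2+\sinh^2r\,d\theta^2$. Convexity makes $\Omega$ star-shaped with respect to $p$ along geodesics, with boundary $r=r(\theta)$, so $\partial_n r\,ds=\sinh r\,d\theta$, the analogue of \eqref{eq:polar-ds-sph}. Therefore
\begin{align}
M=-\frac1{2\pi}\int_0^{2\pi}u\,\mathcal Q_\nu'(\cosh r(\theta))\sinh^2 r(\theta)\,d\theta .
\end{align}
The same computation with $\Phi$, which is regular and smooth at $p$ (so no point contribution appears), gives
\begin{align}
0=\int_0^{2\pi}u\,\mathcal P_\nu'(\cosh r(\theta))\sinh^2 r(\theta)\,d\theta .
\end{align}

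\textbf{Conclusion.} Add $-\frac{c}{2\pi}$ times the second identity to the first. Bound $|u|\le m$ on $\partial\Omega$, and bound the average over $\theta$ by the supremum. Since $r(\theta)\le D$, this gives $M/m\le\sup_{r\in(0,D)}|\mathcal Q_\nu'+c\mathcal P_\nu'|\sinh^2r$. Taking the infimum over $c$ and then the supremum over $u$ proves \eqref{eq:hot-spots-constant-hyp}. Replacing the open interval by the closed one is harmless by continuity.

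\textbf{Main obstacle.} The delicate point is the justification for $\mu>\frac14$. One must verify that $\operatorname{Re}Q_\nu$ retains exactly the coefficient $\frac12$ of the logarithm, so that the delta mass is exactly $1$. Concretely, I would differentiate the hypergeometric representation of $Q_\nu$ near $x=1$ (DLMF 14.8.9/14.3) and check that the coefficient of $\ln(x-1)$ is $-\frac12$ for every $\nu$, independent of $\nu$.

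A secondary technicality is nonsmooth boundary points. There the identities are applied in the weak trace sense, exactly as in the spherical case.
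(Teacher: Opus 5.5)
Your proposal is correct and follows the paper's proof step for step. You apply Green's identity to $(u,\Gamma)$ with $\Gamma=\frac{1}{2\pi}\mathcal Q_\nu(\cosh r)$ and to $(u,\Phi)$ with $\Phi=\mathcal P_\nu(\cosh r)$, use the polar relation $\partial_n r\,ds=\sinh r\,d\theta$, and then bound $|u|\le m$ on $\partial\Omega$ and the $\theta$-average by the supremum over $r\in(0,D)$, exactly as the paper does. Your check that $\operatorname{Re}Q_\nu$ keeps the logarithmic coefficient $\frac12$ (immediate, as you say, since $\ln\frac{2}{x-1}$ is real) only makes explicit a detail the paper takes for granted. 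Likewise, the ``nothing to prove'' case is justified because $\bigl|\mathcal Q_\nu'(\cosh r)+c\,\mathcal P_\nu'(\cosh r)\bigr|\sinh^2 r\to1$ as $r\to0^+$, so the right-hand side is always at least $1$.
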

	
	\begin{proof}
		Without loss of generality, we assume that the global maximum of $|u|$ is attained at an interior point $p\in\Omega$.
		Replacing $u$ by $-u$ if necessary, choose $p\in\Omega$ such that $M=u(p)>0$. 
		Set $m=\max\limits_{x\in\partial\Omega}|u(x)|$.
		We consider the fundamental solution of the Helmholtz equation $\Delta\Gamma+\mu\Gamma=-\delta_p$ in $\Hyp$,
		which is given by
		\begin{equation}
			\Gamma(x,p)=\Gamma(r)=\frac{1}{2\pi}\mathcal Q_\nu(\cosh r),
		\end{equation}
		where $r=d_{\Hyp}(x,p)$.
		As in the spherical case, we have
		\begin{align}\label{eq:hyp-green-identy}
			M=-\frac{1}{2\pi}\int_0^{2\pi}u(x)\bigl(\mathcal Q_\nu'(\cosh r)+c \mathcal P_\nu'(\cosh r)\bigr)\sinh^2r\,d\theta,
		\end{align}
		for any constant $c$. Hence
		\begin{equation}
			\begin{aligned}
				M\le m\max_{\theta}\Bigl|\bigl(\mathcal Q_\nu'(\cosh r(\theta))+c \mathcal P_\nu'(\cosh r(\theta))\bigr)\Bigr|\sinh^2r(\theta).
			\end{aligned}
		\end{equation}
		Since the domain is bounded by diameter $D$, $r(\theta)<D$. Taking the supremum over $r\in(0,D)$ and infimum over $c$, this completes the proof.
	\end{proof}

	The following lemma provides an exponential growth estimate for $|\mathcal Q_\nu'(\cosh z)|\sinh^2 z$.
	\begin{lemma}\label{lem:hyp-uniform-conical-kernel}
		Let $\nu=-\frac12+i\rho$ with $\rho\ge0$ and $\mu=\rho^2+\frac14$.
		Then, for every $r>0$, we have
		\begin{equation}\label{eq:hyp-explicit-conical-kernel}
			\left|\mathcal Q_\nu'(\cosh r)\right|\sinh^2 r\le 2\sqrt{\mu}e^{r/2}.
		\end{equation}
	\end{lemma}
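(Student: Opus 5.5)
The plan is to work with the radial function
\[
h(r):=\sinh^2 r\,\mathcal Q_\nu'(\cosh r)=\sinh r\,\frac{d}{dr}\mathcal Q_\nu(\cosh r),
\]
and to estimate it through an explicit integral representation of the conical function $\mathcal Q_\nu$. I do not plan to use an energy/ODE argument as in the spherical case. The obvious energies for the equation $h''-\coth r\,h'+\mu h=0$ are monotone, but they give either growth like $\sinh^2 r$ or a bound degenerating as $\rho\to0$. Neither yields the sharp rate $e^{r/2}$, which is the true growth (for $\rho=0$ one has $\mathcal Q_{-1/2}(x)\sim c\,x^{-1/2}$, so $h\sim e^{r/2}$).

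\textbf{Step 1 (representation).} Start from the standard Laplace-type formula (DLMF \S14.12) for $\operatorname{Re}\nu>-1$, continued analytically in $\nu$:
\[
Q_\nu(\cosh r)=\int_r^\infty\frac{e^{-(\nu+\frac12)t}}{\sqrt{2\cosh t-2\cosh r}}\,dt .
\]
For $\nu=-\frac12+i\rho$, taking real parts gives
\[
\mathcal Q_\nu(\cosh r)=\int_r^\infty\frac{\cos(\rho t)}{\sqrt{2(\cosh t-\cosh r)}}\,dt .
\]
The endpoint singularity moves with $r$, so I first substitute $t=r+s$, or better $\cosh t-\cosh r=\sinh r\,\sigma$-type variables, so that the integrand's singularity sits at a fixed point. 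Then I differentiate under the integral sign in $r$ and multiply by $\sinh r$. This should produce $h(r)$ as a sum of two integrals:
\begin{itemize}
\item one in which the derivative falls on $\cos(\rho(r+s))$, carrying an explicit factor $\rho$;
\item one in which it falls on the kernel, with no factor of $\rho$.
\end{itemize}

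\textbf{Step 2 (bounding each piece).} In both integrals I bound the oscillatory factor by $1$ in absolute value. What remains are $\rho$-independent integrals, essentially the $\rho=0$ objects $\mathcal Q_{-1/2}(\cosh r)$ and its derivative, expressible through complete elliptic integrals (e.g.\ $Q_{-1/2}(\cosh r)=2e^{-r/2}K(e^{-r})$). Using $\cosh t-\cosh r\ge \sinh r\,(t-r)$ and $\cosh t-\cosh r\ge\frac12e^{t}(1-e^{-(t-r)})$, each kernel integral multiplied by the appropriate power of $\sinh r$ should be bounded by a constant times $e^{r/2}$. The target form is
\[
|h(r)|\le \Bigl(A+\rho B\Bigr)e^{r/2}
\]
with $A\le 1$ and $B\le 1$. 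As $r\to0$ this must be consistent with $h(0^+)=-1$, which follows from $\mathcal Q_\nu(\cosh r)\sim-\ln r$, as in Lemma~\ref{lem:G-nu-asymptotic}.

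\textbf{Step 3 (conclusion).} From $\mu=\rho^2+\frac14$ we have $\rho\le\sqrt\mu$ and $1\le 2\sqrt\mu$. If Step~2 yields $A\le\sqrt\mu$ (for instance $A\le\frac12+\rho$ after a sharper estimate) and $B\le1$, then $A+\rho B\le 2\sqrt\mu$, which gives \eqref{eq:hyp-explicit-conical-kernel}. If the constants from Step~2 come out slightly too large, I will instead split into $r\le r_0$ and $r\ge r_0$. For small $r$, I will use the identity $h'(r)=-\mu\sinh r\,\mathcal Q_\nu(\cosh r)$, obtained exactly as in the proof of Lemma~\ref{lem:G-nu-asymptotic}, together with $h(0^+)=-1$. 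For large $r$, I will use the integral representation.

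\textbf{Main obstacle.} The hardest part is Step~2. Differentiating the representation near the moving singular endpoint must be done without losing integrability. The resulting kernels must then be bounded with constants good enough that the final coefficient is at most $2\sqrt\mu$ uniformly in $\rho\ge0$ and $r>0$, while capturing the exact $e^{r/2}$ rate rather than $re^{r/2}$. I would first try integrating by parts once in $s$ to move the derivative off the singular kernel, and then apply the elementary $\cosh$ inequalities above.
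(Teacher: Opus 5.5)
There is a genuine gap in Steps 2--3. Bounding the two oscillatory factors separately by $1$ (a triangle inequality) cannot produce the constant $2\sqrt\mu=\sqrt{1+4\rho^2}$, however sharply you then estimate the $\rho$-free integrals.

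\textbf{Why the split fails.} With your split $t=r+s$, the two pieces are exactly $\kappa_1(r)=\bigl|\sinh^2 r\,Q_{-1/2}'(\cosh r)\bigr|$ and $\kappa_2(r)=\sinh r\,Q_{-1/2}(\cosh r)$. So the best Step 2 can give is $|h(r)|\le\kappa_1(r)+\rho\,\kappa_2(r)$.
\begin{itemize}
\item Since $Q_{-1/2}(\cosh r)=\pi e^{-r/2}\bigl(1+O(e^{-2r})\bigr)$, you get $\kappa_1(r)\sim\frac\pi4e^{r/2}$ and $\kappa_2(r)\sim\frac\pi2e^{r/2}$. In particular your target $B\le1$ is false.
\item For large $r$ the bound is therefore essentially $\frac\pi4(1+2\rho)e^{r/2}$. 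This exceeds $\sqrt{1+4\rho^2}\,e^{r/2}$ for every $\rho\in[0.18,1.43]$; for instance at $\rho=\frac12$ one has $\pi/2>\sqrt2$.
\item The failure is in the large-$r$ regime, so your fallback of treating small $r$ separately does not help.
\end{itemize}
The Step 3 bookkeeping cannot close either. $A$ is $\rho$-independent, and since $\kappa_1(0^+)=1$ it must satisfy $A\ge1$. Hence $A\le\sqrt\mu$ is impossible for $\rho<\sqrt3/2$. Also, $A\le\frac12+\rho$ does not imply $A\le\sqrt\mu$, because $\frac12+\rho\ge\sqrt{\frac14+\rho^2}$. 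More generally, any bound $(A+\rho B)e^{r/2}$ with $A\ge1$ and $B>0$ exceeds $\sqrt{1+4\rho^2}\,e^{r/2}$ for small $\rho>0$.

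\textbf{The missing idea: keep the phase coherent.} The paper uses Heine's representation $Q_\nu(\cosh r)=\int_0^\infty A^{-\nu-1}\,ds$ with $A=\cosh r+\sinh r\cosh s$. This is your Laplace formula after the substitution $e^t=\cosh r+\sinh r\cosh s$. The paper then differentiates the \emph{complex} function, so the derivative is the single term $-(\nu+1)\int_0^\infty BA^{-\nu-2}\,ds$ with $B=\partial_rA>0$. Moduli are taken there: $|\nu+1|=\sqrt\mu$ and $|A^{-\nu-2}|=A^{-3/2}$. The real part enters only at the end, via $|\mathcal Q_\nu'|\le|Q_\nu'|$.

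In your real-variable language, this amounts to the identity $\partial_r\bigl[A^{-1/2}\cos(\rho\ln A)\bigr]=-BA^{-3/2}\bigl(\tfrac12\cos\theta+\rho\sin\theta\bigr)$ with $\theta=\rho\ln A$, followed by Cauchy--Schwarz: $|\tfrac12\cos\theta+\rho\sin\theta|\le\sqrt\mu$. Bounding the two terms separately would give $\frac12+\rho$ instead, hence $1+2\rho$ as $r\to0^+$, which fails for every $\rho>0$.

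What remains is the $\rho$-free inequality $\sinh r\int_0^\infty BA^{-3/2}\,ds\le2e^{r/2}$, which is exactly your bound $|h|\le e^{r/2}$ in the case $\rho=0$. The paper proves it with the substitutions $y=\sinh\frac s2$ and $x=\sqrt a\,y$, where $a=1-e^{-2r}$. This reduces it to $I(a)\le I(0)=2$, which follows from $I'(a)<0$.
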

	\begin{proof}
		The integral representation formula for the Olver function $\mathbf{Q}_\nu$\cite[(14.25.2)]{DLMF},
		together with the normalization relation between the Olver and Legendre functions \cite[(14.3.10)]{DLMF},
		yields the following representation for the standard $Q_\nu$:
		\begin{equation}\label{eq:hyp-Q-integral-representation}
			Q_\nu(\cosh r)
			=\int_0^\infty\bigl(\cosh r+\sinh r\cosh s\bigr)^{-\nu-1}ds .
		\end{equation}
		For convenience, let $A(r,s)=\cosh r+\sinh r\cosh s$,
		and $B(r,s)=\sinh r+\cosh r\cosh s$.
		Since $\partial_r A(r,s)=B(r,s)$, 
		differentiating \eqref{eq:hyp-Q-integral-representation} with respect
		to $r$ gives
		\begin{equation}\label{eq:hyp-Q-integral-derivative}
			\sinh r Q_\nu'(\cosh r)
			=-(\nu+1)\int_0^\infty B(r,s)A(r,s)^{-\nu-2}ds .
		\end{equation}
		Clearly $A(r,s)>0$ and $B(r,s)>0$ for $r>0$ and $s\ge0$, moreover $\operatorname{Re}(\nu+2)=\frac32$
		and $|\nu+1|=\sqrt{\mu}$. Taking the modulus of \eqref{eq:hyp-Q-integral-derivative}, we obtain
		\begin{align}\label{eq:hyp-Q-reduction}
			\left| Q_\nu'(\cosh r)\right|\sinh^2r
			\le\sqrt{\mu}\sinh r\int_0^\infty\frac{B(r,s)}{A(r,s)^{3/2}}ds.
		\end{align}
		
		We now estimate the integral on the right-hand side explicitly.
		Let $a=1-e^{-2r}$, then $0<a<1$ and $\sinh r=\frac{a}{2}e^r$.
		Now we change variable by $y=\sinh\frac{s}{2}$, then
		\[
		\cosh s=1+2y^2,\qquad ds=\frac{2\,dy}{\sqrt{1+y^2}}.
		\]
		Moreover,
		\begin{align}\label{eq:hyp-A-transform}
			A(r,s) &= \cosh r+\sinh r(1+2y^2) = e^r\bigl(1+ay^2\bigr), \\
			B(r,s)&=\sinh r+\cosh r(1+2y^2)= e^r\bigl(1+(2-a)y^2\bigr).
		\end{align}
		Consequently,
		\begin{align}\label{eq:hyp-kernel-first-transform}
			\sinh r\int_0^\infty\frac{B(r,s)}{A(r,s)^{3/2}} ds
			=e^{r/2}a\int_0^\infty\frac{1+(2-a)y^2}
			{(1+ay^2)^{3/2}\sqrt{1+y^2}}dy .
		\end{align}
		Next let $x=\sqrt a y$,
		then \eqref{eq:hyp-kernel-first-transform} becomes
		\begin{align}\label{eq:hyp-kernel-second-transform}
			\sinh r\int_0^\infty\frac{B(r,s)}{A(r,s)^{3/2}} ds
			=e^{r/2}\int_0^\infty\frac{a+(2-a)x^2}{(1+x^2)^{3/2}\sqrt{a+x^2}}dx =:e^{r/2} I(a) .
		\end{align}
		A direct computation of the derivative of $I(a)$ yields
		\begin{align} \label{eq:I-a-prime}
			I'(a)=\int_0^\infty\frac{a(1-x^2)-2x^4}{2(1+x^2)^{3/2}(a+x^2)^{3/2}} dx,
		\end{align}
		Note that $a(1-x^2)-2x^4=2(a-x^4)-a(1+x^2)$, substituting into \eqref{eq:I-a-prime} gives
		\begin{align}
			I'(a)&=\int_0^\infty\frac{a-x^4}{(1+x^2)^{3/2}(a+x^2)^{3/2}} dx
			-\int_0^\infty\frac{a(1+x^2)}{2(1+x^2)^{3/2}(a+x^2)^{3/2}} dx \\
			&=\left[\frac{x}{\sqrt{1+x^2}\sqrt{a+x^2}}\right]_0^\infty
			-\int_0^\infty\frac{a(1+x^2)}{2(1+x^2)^{3/2}(a+x^2)^{3/2}} dx  \\
			&=-\int_0^\infty\frac{a(1+x^2)}{2(1+x^2)^{3/2}(a+x^2)^{3/2}}  dx<0 .
		\end{align}
		Hence, $I(a)\le I(0)=2$. Therefore, combining \eqref{eq:hyp-Q-reduction}
		and \eqref{eq:hyp-kernel-second-transform}, we obtain
		\begin{equation}\label{eq:hyp-standard-Q-explicit-bound}
			\left| Q_\nu'(\cosh r)\right|\sinh^2r
			\le2\sqrt{\mu}e^{r/2}.
		\end{equation}
		Finally, since $\mathcal Q_\nu=\operatorname{Re}\{ Q_\nu\}$,
		we have
		\begin{align}
			\left|\mathcal Q_\nu'(\cosh r)\right|\sinh^2r
			\le
			\left| Q_\nu'(\cosh r)\right|\sinh^2r\le 2\sqrt{\mu}e^{r/2}.
		\end{align}
		This completes the proof.
	\end{proof}

	\begin{theorem}\label{thm:hyperbolic-absolute-const2}
		Let $\Omega\subset\Hyp$ be a bounded convex domain. Then
		\begin{equation}
			\mathfrak C(\Omega) < 11.2.
		\end{equation}
	\end{theorem}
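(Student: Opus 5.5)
We split into a geometric regime and spectral regimes, following the outline in the introduction. Throughout, $\mu=\mu_2(\Omega)$, $D=\diam_{\Hyp}(\Omega)$, and we use the bound of Proposition~\ref{thm:hot-spots-constant-hyp}.

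\textbf{Step 1: reduction to bounded $\mu$ and $D$.}
If $\mu\le 1/4$, Hatcher's theorem (Remark~\ref{rem:hyperbolic-hatcher-recovery}) rules out interior critical points, so $\mathfrak C(\Omega)=1$. We may therefore assume $\mu>1/4$, so that $\nu=-\frac12+i\rho$ with $\rho>0$. Large area forces small $\mu$ through the isoperimetric or Cheng-type upper bounds. More precisely, Lemma~\ref{lem:hyperbolic-diameter-upper2} gives
\[
\tfrac14<\mu\le \tfrac13+\frac{4j_{0,1}^2}{D^2},
\]
and once $D$ exceeds a threshold $D_0$ this puts us back in Hatcher's range. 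If needed we would instead invoke a known area bound: $\mu\le 1/4$ once $|\Omega|$ is large. Either way we may assume $D\le D_0$, which bounds $\mu$ from below in terms of $D$ and conversely.

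\textbf{Step 2: small and intermediate $\mu$, via the crude kernel bound.}
Take $c=0$ in Proposition~\ref{thm:hot-spots-constant-hyp} and apply Lemma~\ref{lem:hyp-uniform-conical-kernel}. This gives
\[
\mathfrak C(\Omega)\le 2\sqrt{\mu}\,e^{D/2}.
\]
Combining with Lemma~\ref{lem:hyperbolic-diameter-upper2}, we have $\sqrt{\mu}\le\sqrt{1/3+4j_{0,1}^2/D^2}$. In the regime $\mu\le\mu_0$ with $D$ bounded as in Step 1, the product $2\sqrt{\mu}e^{D/2}$ is an explicit function, so we maximize it to get a number below $11.2$. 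The product also stays controlled when $D$ is moderate, since $\sqrt{\mu}$ scales like $2j_{0,1}/D$.

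If the crude bound is too weak in some window, we would use the energy method of Case~1 of Theorem~\ref{thm:sphere-universal-const-4}. Let
\[
h_c(r)=\sinh^2 r\,\bigl(\mathcal Q_\nu'+c\mathcal P_\nu'\bigr)(\cosh r),
\]
which solves $h''-\coth r\,h'+\mu h=0$. For $\mathcal E=h^2+h'^2/\mu$ we get $\mathcal E'=\frac{2\coth r}{\mu}h'^2\ge0$, so $|h_c|^2\le\mathcal E_c(D)$. We would then minimize over $c$ using the Wronskian of $\mathcal P_\nu,\mathcal Q_\nu$, and evaluate the resulting bound at $r=D$ with rigorous interval arithmetic.

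\textbf{Step 3: large $\mu$, as a perturbation of the Bessel problem.}
Rescale by $t=\sqrt{\mu}\,r$; by Lemma~\ref{lem:hyperbolic-diameter-upper2}, $t\le\sqrt{4j_{0,1}^2+D^2/3}$, which is a bounded range. Then $G(t)=h_0(t/\sqrt\mu)$ satisfies
\[
G''-\tfrac1t G'+G=\delta(t)\,G',\qquad \delta(t)=\frac{1}{\sqrt\mu}\coth\!\Bigl(\frac{t}{\sqrt\mu}\Bigr)-\frac1t,
\]
with $0\le\delta(t)\le \frac{t}{3\mu}$. We would derive the analogue of Lemma~\ref{lem:G-nu-asymptotic} using $\operatorname{Re}\psi$, choose $H=V+c_\nu U$ with $c_\nu$ small, and extend Lemma~\ref{lem:U-V-estimate} to the slightly larger $t$-range. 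Lemma~\ref{lem:bessel-apro} together with Gronwall, exactly as in Case~2 of Theorem~\ref{thm:sphere-universal-const-4}, then gives
\[
|G|\le C\,e^{C'/\mu},
\]
which is well below $11.2$ for $\mu$ large.

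\textbf{Main obstacle.}
The hard part is matching the thresholds so that the three regimes overlap with explicit constants. In particular, the intermediate regime $\mu\in(1/4,\mu_0]$ with moderate $D$ is where $2\sqrt{\mu}e^{D/2}$ is largest, and the number $11.2$ presumably comes from optimizing this crude bound against the cap on $D$ from Step 1. Making that cap rigorous is the delicate point: $D$ must be large enough to force $\mu\le 1/4$, using either the Cheng-type bound or the area criterion. Certifying the resulting one-variable maximum is then routine.
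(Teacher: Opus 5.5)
\textbf{Gap in Step~1.} The reduction in your Step~1 is false, and Step~2 depends on it. Lemma~\ref{lem:hyperbolic-diameter-upper2} gives $\mu\le\frac13+\frac{4j_{0,1}^2}{D^2}$. Its right-hand side exceeds $\frac13>\frac14$ for every $D$, so no threshold $D_0$ returns you to Hatcher's range. An area cap does not help either, because it says nothing about $D$.

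In fact no such $D_0$ exists. Take the geodesic sector $\{0<r<R,\ 0<\theta<\alpha\}$ with $\alpha\sinh R$ tiny. It is convex, its diameter is at least $R$, and its area $\alpha(\cosh R-1)$ is tiny. Separate variables: the angular modes $\cos(k\pi\theta/\alpha)$ with $k\ge1$ cost at least $(\pi/\alpha)^2/\sinh^2R$. Hence $\mu_2$ is a radial eigenvalue, i.e.\ some $\mu>0$ with $k_\mu'(R)=0$. This forces $\mu>\frac14$, since $k_\mu'<0$ on all of $(0,\infty)$ when $\mu\le\frac14$. So on the class you must treat, $\mu>\frac14$ coexists with arbitrarily large $D$, and your pointwise bound $2\sqrt\mu\,e^{D/2}$ is unbounded.

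The energy fallback fails for the same reason. $\mathcal E_c$ is increasing on all of $(0,\infty)$; there is no turning point like $\pi/2$ on $\Sph$. Substituting $h=(\sinh r)^{1/2}y$ turns the equation into $y''+\bigl(\mu-\frac14-\frac{3}{4\sinh^2 r}\bigr)y=0$. So every nonzero $h_c$ oscillates with amplitude of order $e^{r/2}$. Minimizing in $c$ via the Wronskian, which is proportional to $\sinh r$, shows $\inf_c\mathcal E_c(D)$ grows like $e^{D}$.

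\textbf{Missing idea.} Keep the angular average coming from Green's identity instead of passing to $\sup_{r<D}$. Let $r(\theta)$ be the distance from $p$ to $\partial\Omega$ in direction $\theta$. Take $c=0$ and use Lemma~\ref{lem:hyp-uniform-conical-kernel}, Cauchy--Schwarz, $e^r\le2\cosh r$, and the polar area formula $\int_0^{2\pi}(\cosh r(\theta)-1)\,d\theta=\Area(\Omega)$. This gives
\[
\mathfrak C(\Omega)\le\frac{\sqrt\mu}{\pi}\int_0^{2\pi}e^{r(\theta)/2}\,d\theta\le 2\sqrt{\frac{\mu\,(\Area(\Omega)+2\pi)}{\pi}},
\]
so the quantity to control is the area, not the diameter. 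The paper handles the remaining cases as follows:
\begin{itemize}
\item If $\Area(\Omega)\ge169\pi/14$, Hatcher's area criterion excludes interior critical points.
\item For $\frac14<\mu\le2$, the displayed bound applies directly.
\item For $2<\mu\le12$, the area is bounded through the hyperbolic Szeg\H{o}--Weinberger inequality $\mu\le\mu_2(\mathcal B_R)\le\frac13+\frac{j_{0,1}^2}{R^2}$, where $\Area(\mathcal B_R)=\Area(\Omega)$.
\end{itemize}
The last two cases both give constants below $11$.

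Your Step~3 is essentially the paper's Case~4 and is fine. For $\mu>12$ one already has $t<5$, so Lemma~\ref{lem:U-V-estimate} needs no extension. It is this regime, at the threshold $\mu=12$, that produces the constant $11.2$, not an optimization of the crude bound.
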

	
	\begin{proof}
		Let $D = \diam_{\Hyp}(\Omega)$, $\mu = \mu_2(\Omega)$,
		and $\nu$ be the complex constant that satisfies $\nu(\nu+1)=-\mu$.
		If the second Neumann eigenfunction has no interior critical point, then
		$\mathfrak C(\Omega)=1$  is trivial. Hence,
		by Remark \ref{rem:hyperbolic-hatcher-recovery} and Theorem \ref{thm:hyp},
		without loss of generality, we may assume that $\mu> 1/4$ and $\mu D^2\ge j_{1,1}^2$
		throughout the proof.
		We divide the proof into four cases.
		
		\medskip
		\noindent
		\textbf{Case 1: Large area.}
		
		Set $A_0=169\pi/14$.
		We claim that every second Neumann eigenfunction has no interior critical point when $\Area(\Omega)\ge A_0$.
		Indeed, by Corollary 1.2 of \cite{Hatcher2026Arxiv}, it suffices to
		choose an explicit area larger than the area of the hyperbolic disk whose
		second Neumann eigenvalue is $1/4$.
		
		Let $R_0=\log 14$.
		On the hyperbolic disk $B_{R_0}$, we consider the test function
		$\phi(r,\theta)=\tanh\frac r2\cos\theta$.
		Its Rayleigh quotient is
		\begin{equation}\label{eq:hyp-disk-trial-Rayleigh}
			\mathcal R[\phi]
			=
			\frac{\tanh^2(R_0/2)}
			{\cosh R_0-1-4\ln\cosh(R_0/2)}.
		\end{equation}
		Since
		\[
		\tanh\frac{R_0}{2}=\frac{13}{15},
		\quad
		\cosh R_0=\frac{197}{28},
		\quad
		\cosh\frac{R_0}{2}=\frac{15}{\sqrt{56}}<e^{3/4}.
		\]
		We obtain
		\[
		\mathcal R[\phi]<\frac14.
		\]
		The second Neumann eigenvalue of a hyperbolic disk is strictly decreasing with its radius; see \cite[Theorem 2]{LangfordLaugesen2023JST}.  Hence the radius $R_*$ for which $\mu_2(B_{R_*})=1/4$ satisfies $R_*<R_0$.
		Since $\Area(B_{R_0})=2\pi(\cosh R_0-1)=A_0$,
		Corollary 1.2 of \cite{Hatcher2026Arxiv} implies that
		the second Neumann eigenfunction has no interior critical point,
		so $\mathfrak C(\Omega)=1$.

		Hence it remains to consider the case of $\Area(\Omega)<A_0$.

		\noindent
		\textbf{Case 2: $\frac14<\mu\le 2$.}
		
		We write $\nu=-\frac12+i\rho$, $\rho=\sqrt{\mu-\frac14}$.
		Taking $c=0$ in \eqref{eq:hyp-green-identy} and applying Lemma \ref{lem:hyp-uniform-conical-kernel}, we obtain
		\begin{align}\label{eq:hyp-low-int-bound}
			\mathfrak C(\Omega)
			\le\frac{1}{2\pi}\int_0^{2\pi}\left|\mathcal Q_\nu'(\cosh r(\theta))\right|\sinh^2r(\theta) d\theta
			\le\frac{\sqrt{\mu}}{\pi}\int_0^{2\pi}e^{r(\theta)/2}d\theta.
		\end{align}
		The area formula gives
		\begin{equation}\label{eq:hyp-radial-area-identity}
			\operatorname{Area}(\Omega)
			=\int_0^{2\pi}\int_0^{r(\theta)}\sinh r\,dr\,d\theta
			=\int_0^{2\pi}\bigl(\cosh r(\theta)-1\bigr)\,d\theta.
		\end{equation}
		Since $e^r\le2\cosh r$ for $r\ge0$,  \eqref{eq:hyp-radial-area-identity} yields
		\[
		\int_0^{2\pi}e^{r(\theta)} d\theta
		\le
		2\bigl(\Area(\Omega)+2\pi\bigr).
		\]
		Therefore, by the Cauchy-Schwarz inequality, we have
		\begin{align}\label{eq:hyp-cauchy-area}
			\int_0^{2\pi}e^{r(\theta)/2} d\theta
			\leq(2\pi)^{1/2}
			\left(\int_0^{2\pi}e^{r(\theta)} d\theta\right)^{1/2} \leq2\sqrt{\pi\bigl(\Area(\Omega)+2\pi\bigr)}.
		\end{align}
		Consequently, \eqref{eq:hyp-low-int-bound} and \eqref{eq:hyp-cauchy-area} give
		\begin{align}\label{eq:hyp-low-area}
			\mathfrak C(\Omega)\le 2\sqrt{\frac{\mu(\operatorname{Area}(\Omega) +2\pi)}{\pi}}.
		\end{align}
		Since $\mu\le 2$, together with $\Area(\Omega)<A_0$, we have
		\begin{align}
			\mathfrak C(\Omega)\le 2\sqrt{2\left(\frac{169}{14}+2\right)}
			=2\sqrt{\frac{197}{7}}<11.
		\end{align}

		\medskip
		\noindent
		\textbf{Case 3: $2<\mu\le 12$.}
		
		Let $\mathcal B_R\subset\Hyp$ be the geodesic disk such that $\Area(\mathcal B_R)=\Area(\Omega)$.
		Applying Lemma \ref{lem:hyperbolic-diameter-upper2} to the geodesic disk $\mathcal B_R$, whose
		diameter is $2R$, yields
		\begin{align}\label{eq:hyp-mu-isoper}
			\mu\le \mu_2(B_R)\le\frac13+\frac{j_{0,1}^2}{R^2}.
		\end{align}
		where the first inequality is the sharp isoperimetric inequality for Neumann eigenvalue in
		hyperbolic space \cite{Ashbaugh1995JLMS}. Consequently,
		\begin{equation}\label{eq:hyp-radius-from-mu}
			R\le\frac{j_{0,1}}{\sqrt{\mu-1/3}}.
		\end{equation}
		By the area formula \eqref{eq:hyp-radial-area-identity}, the area of the geodesic disk is given by $\operatorname{Area}(\mathcal B_R)=2\pi\cosh R-2\pi$. Therefore, together with \eqref{eq:hyp-low-area}
		and \eqref{eq:hyp-radius-from-mu}, we have
		\begin{align}\label{eq:hyp-hot-constant-mu}
			\mathfrak C(\Omega)\le 2\sqrt{\frac{\mu(\operatorname{Area}(\Omega) +2\pi)}{\pi}}
			\le 2\sqrt{2\mu \cosh \left(\frac{j_{0,1}}{\sqrt{\mu-1/3}} \right)}
		\end{align}
		If $2< \mu\le 3$, we simply use \eqref{eq:hyp-hot-constant-mu} to obtain
		\begin{align}
			\mathfrak C(\Omega)\le 2 \sqrt{6\cosh \left(\frac{j_{0,1}}{\sqrt{2-1/3}} \right)}<9.
		\end{align}
		If $3< \mu\le 12$. Define
		\begin{align}
			F(\mu)
			=\mu\cosh \left(\frac{j_{0,1}}{\sqrt{\mu-1/3}}\right),\quad x(\mu)=\frac{j_{0,1}}{\sqrt{\mu-1/3}}.
		\end{align}
		We now establish the monotonicity of $F(\mu)$. A direct differentiation gives
		\begin{align}
			\frac{F'(\mu)}{F(\mu)}=\frac1\mu-\frac{x(\mu)\tanh x(\mu)}{2(\mu-1/3)}.
		\end{align}
		Since $\tanh x<1$ for $x>0$, and $x(\mu)<\frac{j_{0,1}}{\sqrt{3-1/3}}<1.48<\frac{2(\mu-1/3)}{\mu}$
		holds for $\mu>3$, it follows that $F$ is increasing on $[3,\infty)$.
		Therefore \eqref{eq:hyp-hot-constant-mu} gives
		\begin{align}
			\mathfrak C(\Omega)
			\le2\sqrt{2\mu \cosh \left(\frac{j_{0,1}}{\sqrt{\mu-1/3}} \right)}
			<2\sqrt{2\cdot 12\cosh\left(\frac{j_{0,1}}{\sqrt{12-1/3}} \right)}<11.
		\end{align}

		\medskip
		\noindent
		\textbf{Case 4: $\mu> 12$.}
		
		Let $t=\sqrt{\mu}r$, $g(r,\nu)=-\sinh^2r\mathcal Q_\nu'(\cosh r)$ and $\displaystyle G_\nu(t)=g (\frac{t}{\sqrt{\mu}}, \nu )$.
		Using the Legendre differential equation, a direct computation shows that $G_\nu(t)$ satisfies
		\begin{align}
			G''_\nu(t)-\frac{1}{\sqrt{\mu}}\coth\left(\frac{t}{\sqrt{\mu}}\right)G'_\nu(t)+G_\nu(t)=0.
		\end{align}
		Equivalently,
		\begin{equation}\label{eq:hyp-G-bessel-perturbation}
			G''_\nu-\frac1tG'_\nu+G_\nu=\varepsilon_\mu(t)G'_\nu,
		\end{equation}
		where
		\[
		\varepsilon_\mu(t)
		=
		\frac1{\sqrt{\mu}}
		\coth\left(\frac{t}{\sqrt{\mu}}\right)
		-\frac1t.
		\]
		Using the elementary inequality $\displaystyle 0<\coth x-\frac1x\le\frac{x}{3}$ for $x>0$, we have
		\begin{equation}\label{eq:hyp-epsilon-bound}
			0\le\varepsilon_\mu(t)\le\frac{t}{3\mu}.
		\end{equation}
		Moreover, Lemma \ref{lem:hyperbolic-diameter-upper2} yields
		\[
		D^2\left(\mu-\frac13\right)
		\le4j_{0,1}^2.
		\]
		Hence, since $0<r<D$, we have
		\begin{align}
			t=\sqrt{\mu}r<2j_{0,1}\sqrt{\frac{\mu}{\mu-1/3}}<5.
		\end{align}
		
		Let $U(t)=tJ_1(t)$ and $\displaystyle V(t)=-\frac{\pi}{2}tY_1(t)$,
		they are two linearly independent solutions of the homogeneous equation $H''-\frac1t H'+H=0$.
		By the local expansion formula of the Olver function $\mathbf{Q}_\nu$ as $x\to 1^+$ \cite[(14.8.9)]{DLMF},
		together with the normalization relation \cite[(14.3.10)]{DLMF}, we have
		\begin{align}
			Q_\nu=\frac{1}{2}\ln(\frac{2}{x-1})-\gamma -\psi(\nu+1)+O((x-1)\ln(x-1)).
		\end{align}
		Together with $\cosh r-1=r^2/2+O(r^4)$, and the definition $\mathcal Q_\nu=\operatorname{Re}\{ Q_\nu\}$,
		proceeding as in Lemma \ref{lem:G-nu-asymptotic}, with the extra step of taking the modulus,
		we may choose $c_\nu=\frac12\ln\mu-\operatorname{Re} \psi\left(\nu+1\right)$
		such that $H_\nu(t)=V(t)+c_\nu U(t)$ has the same $t^2$-order behavior as $G_\nu(t)$.

		We next derive an estimate for $c_\nu$. Let $z=\nu+1=\frac{1}{2}+i\rho$, 
		by the integral representation of the digamma function (\cite[(5.9.15)]{DLMF}), we have
		\begin{align}\label{eq:digamma-inte}
			\psi(z)=\ln z-\frac{1}{2z}-2\int_{0}^{\infty}\frac{t}{(t^2+z^2)(e^{2\pi t} -1)}dt
		\end{align}
		Note that $|z|^2=\mu$, $\operatorname{Re}\frac{1}{2z}=\frac{1}{4\mu}$,
		taking the real part of \eqref{eq:digamma-inte}, we obtain
		\begin{align}
			|c_\nu|\le& \frac{1}{4\mu}+2\int_{0}^{\infty}\frac{t}{|t^2+z^2|(e^{2\pi t} -1)}dt \\
			\le & \frac{1}{4\mu}+\frac{2}{\rho}\int_{0}^{\infty}\frac{t}{e^{2\pi t} -1}dt
			=\frac1{4\mu}+\frac1{12\rho},
		\end{align}
		where the last inequality follows from $|t^2+z^2|\ge\rho$.
		Therefore, $\mu\ge 12$ gives $|c_\nu|<\frac1{22}$.

		As in the proof of Theorem \ref{thm:sphere-universal-const-4}, by using Lemma \ref{lem:U-V-estimate},
		we obtain the following estimates for $H_\nu$ and Green kernel $K$:
		\begin{equation}
			\begin{aligned}
				&	|H_\nu(t)|, |H_\nu'(t)|\le \frac{27}{10}+\frac1{22}\frac{19}{10}=\frac{613}{220},\\
				&	|K(t,s)|, |\partial_tK(t,s)|\le\frac{513}{50s}.
			\end{aligned}
		\end{equation}
		Since $H_\nu(t)$ has the same $t^2$-order behavior as $G_\nu(t)$,
		Lemma \ref{lem:bessel-apro}  gives
		\begin{equation}\label{eq:general-green-representation2}
			G_\nu(t)=H_\nu(t)+\int_0^t K(t,s)\varepsilon_\mu(s)G'_\nu(s) ds.
		\end{equation}
		Differentiating \eqref{eq:general-green-representation2}, combining the estimates for $H_\nu$, $K$ and \eqref{eq:hyp-epsilon-bound}, we have
		\begin{align}
			|G'_\nu(t)|\le \frac{613}{220} +\frac{171}{50\mu} \int_0^t|G_\nu'(s)|ds.
		\end{align}
		Gronwall's inequality implies $\displaystyle |G_\nu'(t)|\le\frac{613}{220}\exp\left(\frac{171t}{50\mu}\right)$, 
		substituting this into \eqref{eq:general-green-representation2} gives
		\begin{equation}\label{eq:hyp-G-final-bound}
			|G_\nu(t)|\le\frac{613}{220}\exp\left(\frac{171t}{50\mu}\right).
		\end{equation}
		Using $t<2j_{0,1}\sqrt{\frac{\mu}{\mu-1/3}}$ and $\mu\ge 12$, we have
		\begin{align}
			|G_\nu(t)|\le\frac{613}{220}\exp \left(\frac{171\cdot 2\cdot j_{0,1}}{50\cdot12 }\sqrt{\frac{12}{12-1/3}}\right)
			<11.2.
		\end{align}
		Therefore Proposition \ref{thm:hot-spots-constant-hyp} gives $\mathfrak C(\Omega)<11.2$ in this range.

		Combining the four cases, we conclude that
		\[	\mathfrak C(\Omega) < 11.2\]
		This completes the proof.
	\end{proof}

\end{document}